\documentclass[twoside, 11pt]{amsart}
\usepackage[letterpaper,hmargin=1.25in,vmargin=1.5in]{geometry}
\usepackage{amscd}
\usepackage{verbatim}
\usepackage{color}
\usepackage{comment}

\input xy
\xyoption{all}

\usepackage{amssymb}
\usepackage{hyperref}

\DeclareMathAlphabet{\cat}{OT1}{cmss}{m}{sl}

\newtheorem*{theorem*}{Theorem}
\newtheorem{theorem}{Theorem}[section]

\newtheorem{proposition}[theorem]{Proposition}
\newtheorem{lemma}[theorem]{Lemma}
\newtheorem{corollary}[theorem]{Corollary}

\theoremstyle{definition}
\newtheorem{remark}[theorem]{Remark}



\newcommand{\tens}{\otimes}

\newcommand{\gmu}{\boldsymbol{\mu}}



\newcommand{\id}{\mathrm{id}}


\newcommand{\CH}{\operatorname{CH}}

\newcommand{\C}{\operatorname{\cat{C}}}

\renewcommand{\Im}{\operatorname{Im}}

\newcommand{\Ker}{\operatorname{Ker}}

\newcommand{\ind}{\operatorname{\hspace{0.3mm}ind}}

\newcommand{\Inv}{\operatorname{Inv}}
\newcommand{\res}{\operatorname{res}}

\newcommand{\disc}{\operatorname{disc}}

\newcommand{\Br}{\operatorname{Br}}

\newcommand{\SB}{\operatorname{SB}}
\newcommand{\SBS}{\operatorname{\cat{SB}}}

\newcommand{\gSL}{\operatorname{\mathbf{SL}}}

\newcommand{\tors}{\operatorname{\cat{tors}}}
\newcommand{\rank}{\operatorname{rank}}
\newcommand{\PQ}{\operatorname{\cat{PQ}}}
\newcommand{\QS}{\operatorname{\cat{Q}}}


\renewcommand{\P}{\mathbb{P}}
\newcommand{\Z}{\mathbb{Z}}

\newcommand{\Q}{\mathbb{Q}}

\newcommand{\cA}{\mathcal A}

\newcommand{\cF}{\mathcal F}

\newcommand{\cM}{\mathcal M}


\title[Codimension 2 cycles on products of  homogeneous surfaces] 
{Codimension 2 cycles on products of projective homogeneous surfaces}

\author
[S. Baek] {Sanghoon Baek}

\address{Department of Mathematical Sciences, KAIST, 291 Daehak-ro, Yuseong-gu, Daejeon, 305-701, Republic of Korea}

\email {sanghoonbaek@kaist.ac.kr}


\begin{document}

\maketitle

\begin{abstract}
In the present paper, we provide general bounds for the torsion in the codimension $2$ Chow groups of the products of projective homogeneous surfaces. In particular, we determine the torsion for the product of four Pfister quadric surfaces and the maximal torsion for the product of three Severi-Brauer surfaces. We also find an upper bound for the torsion of the product of three quadric surfaces with the same discriminant.
\end{abstract}

\tableofcontents

\section{Introduction}\label{intro}

Let $X$ be a smooth projective homogeneous variety under the action of a semisimple group over an algebraically closed field $F$. The Chow group $\CH(X)$ of algebraic cycles modulo the rational equivalence is well-understood as well as its ring structure. Namely, the Chow group of $X$ is a free abelian with the basis of Schubert cycles. For an arbitrary base field $F$, this is no longer true: the Chow group $\CH(X)$ can have torsion. Indeed, by a transfer argument the problem of determining $\CH(X)$ over an arbitrary field $F$ reduces to computing its torsion subgroup $\CH(X)_{\tors}$. 

For codimension $d\leq 1$, the Chow group $\CH^{d}(X)$ is torsion-free. A nontrivial torsion first appears when $d=2$ and the exact structure of $\CH^{2}(X)_{\tors}$ is known in many cases. For a projective quadric $X$, the group $\CH^{2}(X)_{\tors}$ is either $0$ or $\Z/2\Z$ \cite{Kar91}. For a Severi-Brauer variety $X$, it is shown that the group $\CH^{2}(X)_{\tors}$ is either $0$ or a cyclic group if the corresponding algebra satisfies certain conditions \cite{Kar}. For a simple simply connected group which splits over $F(X)$, it is known that the group $\CH^{2}(X)_{\tors}$ is a cyclic group generated by the Rost invariant \cite{GMS}, \cite{Pey95}. However, the only partial results are known for their products. In this case, the structure of the torsion subgroup is more complicated.

We consider in this paper the product of two dimensional flag varieties of the same type. For an integer $n\geq 1$, this can be divided into two classes: the set of all products of $n$ Severi-Brauer surfaces, denoted by $\SBS_{n}$ and the set of all products of $n$ quadric surfaces, denoted by $\QS_{n}$. The latter has a special subclass $\PQ_{n}$ consisting of all products of $n$ Pfister quadric surfaces. Here, we view the product of two conics as the product of two Pfister quadric surfaces as they have the same torsion subgroup  in $\CH^{2}$ by \cite[Corollary 2.5]{IzhKar98}. To measure the order of the torsion subgroup of codimension $2$ cycles, we introduce the following notation: $\cM(\cA)=\max_{X\in \cA}\{ |\CH^{2}(X)_{\tors}|\}$, where $\cA=\SBS_{n}$ or $\QS_{n}$ or $\PQ_{n}$. Hence, the group $\CH^{2}(X)_{\tors}$ for any $X\in \cA$ is an elementary abelian group whose order is a divisor of $\cM(\cA)$. We denote by $\C_{n}$ the set of all products of $n$ conics, thus $\cM(\PQ_{n})=\cM(\C_{n})$.

It is well-known that $\cM(\SBS_{1})=\cM(\QS_{1})=\cM(\C_{2})=1$. In \cite{Pey} Peyre proved that $\cM(\C_{3})=2$. In \cite{IzhKar}, \cite{IzhKar98} and \cite{IzhKar2000} Izhboldin and Karpenko proved that $\cM(\SBS_{2})=3$ and $\cM(\QS_{2})=2$. As it is showed in \cite[Theorem 2.1]{Pey95} and \cite[Theorem 4.1]{Pey}, the torsion subgroup of codimension $2$ cycles is closely related to a relative Galois cohomology group in degree $3$ and the above results were used to describe the cohomology groups. 

The main goal of this paper is to generalize their results to arbitrary $n$. More precisely, our main result says that
\begin{theorem*}
For any $n\geq 2$, we have $\cM(\C_{n})\!=\!\cM(\PQ_{n})\!=\!2^{N}$ and $\cM(\SBS_{n})\geq 3^{M}$, where
\[N=2^{n}-({{n}\choose{2}}+n+1)\, \text{ and }\,  M=2^{n}+4{{n}\choose{3}}-(n+1).\]
Moreover, $(1)$ the group $\CH^{2}(X)_{\tors}$ admits all elementary abelian group whose order is a divisor of $2^5$ for $X\in \C_{4}$. $(2)$ $\cM(\SBS_{3})=3^{8}$.
\end{theorem*}

Our results $\cM(\PQ_{4})=2^{5}$ and $\cM(\SBS_{3})=3^{8}$ give the first examples such that the group $\CH^{2}(X)_{\tors}$ is not cyclic in their classes. Moreover, we determine the torsion of $\C_{4}$ in terms of the indexes of the corresponding algebras (Theorem \ref{fourconicsthm}) and present its application to a Galois cohomology in degree $3$. Another direct application of our results (Proposition \ref{threeconics}, Theorem \ref{fourconicsthm}) is given in \cite{Baek}. These results provide the key ingredients to determine the degree $3$ invariants of the quotient of $(\gSL_{2})^{n}$ by its maximal central subgroup (see \S\ref{applicationgalcoh} for details). This also indicates possible applications of our results on the product of Severi Brauer surfaces. In the last part, we provide an elementary short proof of Izhboldin and Karpenko's result for $\cM(\QS_{2})$ (Theorem \ref{twoquadricsurface}, Proposition \ref{lowerboundgammaintwoquadric}) that does not use any cohomological method or $K$ theory of quadrics, and find an upper bound for the torsion of the product of three quadric surfaces with the same discriminant (Proposition \ref{threequadricsurface}).


As shown in \cite{Kar95}, \cite{Kar}, the topological and gamma filtrations on the Grothendieck ring can be used to find the torsion in $\CH^{2}$ of projective homogeneous varieties. Moreover, the torsion in the filtrations can be computed by studying the divisibility of certain polynomials produced by the Chern classes on the Grothendieck ring. We extend this approach to obtain the torsion in $\CH^{2}$ on the product of projective homogeneous surfaces by using new combinatorial results.

This paper is organized as follows. In Section \ref{section2}, we recall basic definitions and facts used in the rest of the paper. In Section \ref{section4}, we determine $\cM(\C_{n})$ and the torsion of $\C_{4}$. In the last part of the section, we present its application to a Galois cohomology group. In Section \ref{section5}, we find a general lower bound for $\cM(\SBS_{n})$. Using this bound, we determine $\cM(\SBS_{3})$. In the last section, we recover a result of Izhboldin and Karpenko and extend it to the product of three quadric surfaces with the same discriminant. 

In the present paper, we denote by $A_{\tors}$ the torsion subgroup of an abelian group $A$ and $I_{n}=\{1,\ldots, n\}$ for any integer $n\geq 1$. 


\section{Two filtrations on the Grothendieck ring}\label{section2}
In this section, we briefly recall definitions and properties of the topological filtration and the gamma filtration on the Grothendieck ring $K(X)$ of a smooth projective variety $X$ (see \cite{FL}, \cite{Kar} for details). We also provide a useful fact concerning the torsion part of these two filtrations on $X$. The topological filtration 
\[K(X)=T^{0}(X)\supset T^{1}(X)\supset \cdots \]
is given by the ideal $T^{d}(X)$ generated by the class $[\mathcal{O}_{Y}]$ of the structure sheaf of a closed subvariety $Y$ of codimension at least $d$. We write $T^{d/d+1}(X)$ for the quotient $T^{d}(X)/T^{d+1}(X)$.

Let $\Gamma^{0}(X)=K(X)$ and $\Gamma^{1}(X)=\Ker(\rank:K(X)\to \Z)$. The gamma filtration 
\[K(X)=\Gamma^{0}(X)\supset \Gamma^{1}(X)\supset \cdots \]
is given by the ideals $\Gamma^{d}(X)$ generated by $\gamma_{d_{1}}(x_{1})\cdots \gamma_{d_{i}}(x_{i})$ with $x_{i}\in \Gamma^{1}(X)$ and $d_{1}+\cdots +d_{i}\geq d$, where $\gamma_{d_{i}}$ is the gamma operation on $K(X)$. For instance, we get $\gamma_{0}(x)=1$ and $\gamma_{1}=\id$, where $x\in K(X)$. Indeed, the gamma operation defines the Chern class $c_{i}(x):=\gamma_{i}(x-\rank(x))$ with values in $K$. We write $\Gamma^{d/d+1}(X)$ for the quotient $\Gamma^{d}(X)/\Gamma^{d+1}(X)$.

For any $d\geq 0$, the gamma filtration $\Gamma^{d}(X)$ is contained in the topological filtration $T^{d}(X)$. For small degree $d=1, 2$, two filtrations coincide. Moreover, the second quotient of the topological filtration can be identified with the codimension $2$ cycles so that we have:
\begin{equation}\label{tolchow}
\Gamma^{2/3}(X)\twoheadrightarrow T^{2/3}(X)=\CH^{2}(X).
\end{equation}

Now we assume that $X$ is a smooth projective homogeneous variety over a field $F$. Let $E$ be a splitting field of $X$. Then, by \cite[Proposition 3.4]{Pey95} we have
\begin{equation}\label{gammatwo}
T^{d}(X)=\Gamma^{d}(X)=\Gamma^{d}(X_{E})\cap K(X)\, \text{ for } d=1, 2.
\end{equation}

Let $\cF$ be either the gamma-filtration $\Gamma$ or the topological filtration $T$ on $K(X)$. Applying the Snake lemma to the commutative diagram involving the exact sequences $0\to \cF^{d+1}(X)\to \cF^{d}(X)\to \cF^{d/d+1}(X)\to 0$ and the one over a splitting field $E$ of $X$, we have the following useful formula \cite[Proposition 2]{Kar95}:
\begin{equation}\label{alphalem}
|\!\oplus \cF^{d/d+1}(X)_{\tors}|\cdot|K(X_{E})/K(X)|=\prod_{d=1}^{\dim(X)}|\cF^{d/d+1}(X_{E})/\Im(\res^{d/d+1})|,
\end{equation}
where $\res^{d/d+1}: \cF^{d/d+1}(X)\to \cF^{d/d+1}(X_{E})$ is the restriction map.

\subsection{Grothendieck group of the product of Severi-Brauer varieties}\label{section3} We recall some basic facts about the Grothendieck group $K$ and $\CH^{2}$ of a product of Severi-Brauer varieties.

Let $A_{i}$ be a central simple $F$-algebra of degree $d_{i}$ for $1\leq i\leq n$. Consider the restriction 
\begin{equation}\label{Quillengen}
K(\prod_{i=1}^{n}\SB(A_{i}))\to K(\prod_{i=1}^{n}\P^{d_{i}-1}_{E}),
\end{equation}
where the corresponding Severi-Brauer variety $\SB(A_{i})$ over a splitting field $E$ is identified with the projective space $\P^{d_{i}-1}_{E}$ for all $i$. The latter ring in (\ref{Quillengen}) is isomorphic to the quotient ring $\Z[x_{1},\cdots, x_{n}]/((x_{1}-1)^{d_{1}},\cdots, (x_{n}-1)^{d_{n}})$, where $x_{i}$ is the pullback of the class of the tautological line bundle on $\P^{d_{i}-1}_{E}$. Then by \cite[\S 8 Theorem 4.1]{Qui} (see also \cite[Proposition 3.1]{Pey}) the image of the map (\ref{Quillengen}) coincides with the sublattice with basis 
\begin{equation}\label{Quillenbasis}
\{ \ind(A_{1}^{\tens i_{1}}\tens \cdots \tens A_{n}^{\tens i_{n}})\cdot x_{1}^{i_{1}}\cdots x_{n}^{i_{n}} \,|\,\, 0\leq i_{j}\leq d_{j}-1, 1\leq j\leq n\}.
\end{equation}

Let $X$ be the product of Severi-Brauer varieties $\SB(A_{i})$ as above. For codimension $2$ cycles, one can simplify the computation of torsion subgroup by using \cite[Proposition 4.7]{IzhKar}: if $\langle A'_{1}, \ldots, A'_{n'}\rangle=\langle A_{1},\ldots, A_{n}\rangle$ in the Brauer group $\Br(F)$, then for $X'=\prod_{i=1}^{n'} \SB(A'_{i})$ \begin{equation}\label{reductionSB}
\CH^{2}(X)_{\tors}\simeq \CH^{2}(X')_{\tors}.
\end{equation}

Now we restrict our attention to $p$-primary algebras. Let $A_{1},\ldots, A_{n}$ be central simple algebras of $p$-power degree with given indices $\ind(A_{1}^{\tens i_{1}}\tens \cdots \tens A_{n}^{\tens i_{n}})$ for all nonnegative integers $i_{1},\ldots, i_{n}$. Let $X$ be the product of $\SB(A_{1}),\ldots, \SB(A_{n})$. Then, by \cite[Corollary 2.15]{Kar} the map (\ref{tolchow}) induces a surjection on torsion subgroups $\Gamma^{2/3}(X)_{\tors}\twoheadrightarrow \CH^{2}(X)_{\tors}$. Moreover, by \cite[Theorem 4.5]{IzhKar} and \cite[Proposition III.1]{Kar99} there is a product $\bar{X}$ of Severi-Brauer varieties $\SB(\bar{A}_{i})$ of algebras $\bar{A}_{i}$ with $\ind(A_{1}^{\tens i_{1}}\tens \cdots \tens A_{n}^{\tens i_{n}})=\ind(\bar{A}_{1}^{\tens i_{1}}\tens \cdots \tens \bar{A}_{n}^{\tens i_{n}})$ for all integers $i_{1},\ldots, i_{n}\geq 0$ such that the above surjective map becomes a bijection
\begin{equation}\label{generic}
\Gamma^{2/3}(\bar{X})_{\tors}\overset\sim\to \CH^{2}(\bar{X})_{\tors}.
\end{equation}
This variety $\bar{X}$ will be called a generic variety corresponding to $X$.

\section{Product of Conics}\label{section4}

In the present section, we provide a general bound for the torsion in codimension $2$ cycles of the product of $n$ conics in Corollary \ref{maximaltorsionconicdetermined}. In case of the product of four conics, we determine its torsion in terms of the indexes of the corresponding algebras in Theorem \ref{fourconicsthm}. In the last subsection, we present its application to a Galois cohomology group. 

For the product of two conics $X$, it is well-known that $\CH^{2}(X)_{\tors}=0$ \cite[Corollary 3.9]{Pey}. We determine the Chow group of codimension $2$ of the product of three conics. 
\begin{proposition}$($\text{cf}. \cite[Proposition 6.1, Proposition 6.3]{Pey}$)$\label{threeconics}
Let $Q_{1}, Q_{2}, Q_{3}$ be quaternions and $X=\SB(Q_{1})\times \SB(Q_{2})\times \SB(Q_{3})$. Then, we have $\cM(\C_{3})=\cM(\PQ_{3})=2$. Moreover, $\CH^{2}(X)_{\tors}$ is trivial except the cases where the division algebras $Q_{1}, Q_{2}, Q_{3}$ satisfy relations $\ind(Q_{i}\tens Q_{j})=\ind(Q_{1}\tens Q_{2}\tens Q_{3})=2$ or $4$ for all $1\leq i\neq j\leq 3$ and in these cases $\CH^{2}(\bar{X})_{\tors}=\Z/2\Z$, where $\bar{X}$ is the corresponding generic variety.
\end{proposition}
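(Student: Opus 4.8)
The plan is to compute the torsion subgroup by working with the gamma filtration on $K(X_E)$ and applying the formula~(\ref{alphalem}) in the topological (equivalently, gamma) filtration for $d=2$, following the strategy already set up in Sections~\ref{section2} and~\ref{section3}. Since the $Q_i$ are quaternions, the relevant indices $\ind(Q_i)$, $\ind(Q_i\tens Q_j)$, and $\ind(Q_1\tens Q_2\tens Q_3)$ all lie in $\{1,2,4,8\}$, and by the reduction~(\ref{reductionSB}) I may assume each $Q_i$ is division, so $\ind(Q_i)=2$ for all $i$. First I would write down the Quillen basis~(\ref{Quillenbasis}) of $K(X)$ inside $K(X_E)=\Z[x_1,x_2,x_3]/((x_i-1)^2)$, substitute $y_i=x_i-1$, and reexpress the basis in terms of the $y_i$; this converts the integral structure of $K(X)$ into explicit congruence conditions on the coefficients of monomials $y_i$, $y_iy_j$, $y_1y_2y_3$. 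The coefficient $|K(X_E)/K(X)|$ appearing in~(\ref{alphalem}) is then the product of the indices $\ind(Q_1^{\tens i_1}\tens Q_2^{\tens i_2}\tens Q_3^{\tens i_3})$ over $0\le i_j\le 1$.

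Next I would carry out the case analysis driven by the index data. The key quantities are the three biquadratic indices $\ind(Q_i\tens Q_j)$ and the triple index $\ind(Q_1\tens Q_2\tens Q_3)$, each of which is $2$ or $4$. As in Example~\ref{twoconicsex}, whenever some $\ind(Q_i\tens Q_j)=4$ the tensor product $Q_i\tens Q_j$ is itself division, which forces the corresponding class $y_iy_j$ to sit one filtration step higher and tends to kill the contribution to $\oplus T^{n/n+1}(X)_{\tors}$; the nontrivial torsion can only survive when every pairwise index equals $2$, i.e.\ when each $Q_i\tens Q_j$ is split by a common quadratic extension. Under that hypothesis the three quaternions are linked, and the triple index $\ind(Q_1\tens Q_2\tens Q_3)$ is then either $2$ or $4$; these are exactly the cases singled out in the statement. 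For each surviving configuration I would compute the factors $\alpha_n=|T^{n/n+1}(X_E)/\Im(\res^{n/n+1})|$: the degree-$1$ factor is controlled by the diagonal and multidiagonal embeddings $\SB(Q_i)\hookrightarrow X$ and products thereof, which produce elements of $\Im(\res^{1/2})$ of the form $y_i+y_j$ and $y_1+y_2+y_3$, while the degree-$2$ factor is controlled by which $y_iy_j$ and $y_1y_2y_3$ already lie in $T^2(X)$ versus being realized as restrictions. Plugging the computed $\alpha_n$ and $|K(X_E)/K(X)|$ into~(\ref{alphalem}) yields $|\oplus T^{n/n+1}(X)_{\tors}|$, and since only the degree-$2$ part contributes to codimension $2$, I recover $|\CH^2(X)_{\tors}|$.

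Finally, to obtain the clean conclusion $\CH^2(\bar X)_{\tors}=\Z/2\Z$ in the two listed cases and triviality elsewhere, I would pass to the generic variety $\bar X$ via~(\ref{generic}), where the surjection $\Gamma^{2/3}\twoheadrightarrow\CH^2$ becomes an isomorphism, so that the order computed from~(\ref{alphalem}) is exactly $|\CH^2(\bar X)_{\tors}|$; the bound $\cM(\C_3)=2$ then follows by checking that the surviving configurations give order exactly $2$ and no configuration gives anything larger, with $\cM(\PQ_3)=\cM(\C_3)$ by the identification already noted in the introduction. The main obstacle I expect is the degree-$1$ image computation: pinning down $\Im(\res^{1/2})$ precisely requires knowing exactly which integral combinations of the $y_i$ are restrictions of classes in $K(X)$, and this is where the interplay between the index conditions and the embeddings must be handled carefully rather than by a uniform formula. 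A secondary subtlety is ensuring that the passage from the gamma-filtration order to the genuine Chow group is faithful in the non-generic cases, since there the surjection~(\ref{tolchow}) need not be injective; isolating the two genuinely torsion-bearing index configurations, and verifying that in every other configuration the product in~(\ref{alphalem}) collapses to the trivial group, is the crux of the argument.
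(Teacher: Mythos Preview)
Your overall framework matches the paper's: reduce via~(\ref{reductionSB}) to $\ind(Q_i)=2$, write the $y$-basis of $K(X)$ from~(\ref{Quillenbasis}), and apply~(\ref{alphalem}). However, your case analysis is wrong, and the half of the argument that actually exhibits a nonzero torsion class is missing.

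The assertion that ``nontrivial torsion can only survive when every pairwise index equals $2$'' is false: the configuration $e_{ij}:=\ind(Q_i\tens Q_j)=4$ for all $i\neq j$ together with $d:=\ind(Q_1\tens Q_2\tens Q_3)=4$ also gives $\Gamma^{2/3}(X)_{\tors}=\Z/2\Z$. Your heuristic that $e_{ij}=4$ ``forces $y_iy_j$ to sit one filtration step higher and kills the contribution'' is not the mechanism at work. In the paper, triviality comes precisely when $d=2e_{ij}$ for \emph{some} pair $(i,j)$ (equivalently $d=8$, or $d=4$ with some $e_{ij}=2$): then $dy_1y_2y_3=(2y_k)\cdot(e_{ij}y_iy_j)\in T^3(X)$, and since this is the only basis element lying in $K(X)\cap T^3(X_E)$, one gets $T^{2/3}(X)_{\tors}=0$ directly. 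In the remaining cases the paper bounds $\alpha_1\le 2^3$, $\alpha_2\le\prod e_{ij}$, $\alpha_3\le 2d$, and~(\ref{alphalem}) yields $|\!\oplus\Gamma^{n/n+1}(X)_{\tors}|\le 2$. No diagonal embeddings are used; the trivial bound on $\alpha_1$ already suffices, so the ``main obstacle'' you anticipate does not arise.

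You also do not say how to produce the torsion element. Formula~(\ref{alphalem}) only gives an upper bound, and~(\ref{generic}) cannot conclude $\CH^2(\bar X)_{\tors}=\Z/2\Z$ without first knowing $\Gamma^{2/3}(X)_{\tors}\neq 0$. The paper obtains $dy_1y_2y_3\in\Gamma^2(X)$ from the Chern class $c_2(dx_1x_2x_3)=\binom{d}{2}\bigl(6y_1y_2y_3+2\sum y_iy_j\bigr)$, and then observes that $\Gamma^3(X)$ is generated by $\Gamma^1(X)\cdot\Gamma^2(X)$, so every element of $\Gamma^3(X)$ is divisible by $2d$; hence $dy_1y_2y_3\notin\Gamma^3(X)$ and its class has order exactly $2$. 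This Chern-class computation together with the divisibility of $\Gamma^3(X)$ is the heart of the lower bound, and it is absent from your plan.
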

\begin{proof}
Let $d=\ind(Q_{1}\tens Q_{2}\tens Q_{3})$, $d_{ij}=\ind(Q_{i}\tens Q_{j})$ for $1\leq i\neq j\leq 3$, and $X=\SB(Q_{1})\times \SB(Q_{2})\times \SB(Q_{3})$. If one of $\ind(Q_{i})$, $d_{ij}$, $d$ is equal to $1$, then by (\ref{reductionSB}) the group $\CH^{2}(X)_{\tors}$ is trivial. Hence, by (\ref{Quillenbasis}) we may assume that we have the following basis 
\[\{1, 2y_{i}, d_{ij}y_{i}y_{j}, d(y_{1}y_{2}y_{3}+y_{1}y_{2}+y_{1}y_{3}+y_{2}y_{3})\, | \, 1\leq i\neq j\leq 3\}\]
of $K(X)$, where $y_{i}=x_{i}-1$, $x_{i}$ is the pullback of the tautological line bundle on the projective line over a splitting field $E$, and $d_{ij}, d\geq 2$.

If either $d=8$ or $d=4$ and $d_{ij}=2$ for some $i, j$, then the subgroup $K(X)\cap T^{3}(X_{E})$ is generated by $dy_{1}y_{2}y_{3}$. As $d_{ij}y_{i}y_{j}\in T^{2}(X)$ and $2y_{k}\in T^{1}(X)$ for $k\neq i, j$, we have $dy_{1}y_{2}y_{3}=2y_{k}(d_{ij}y_{i}y_{j})\in T^{3}(X)$, which together with (\ref{gammatwo}) implies that $T^{2/3}(X)_{\tors}=\CH^{2}(X)_{\tors}=0$.

We now turn to the remaining cases. Let $\alpha_{n}=|\Gamma^{n/n+1}(X_{E})/\Im(\res^{n/n+1})|$ for $1\leq n\leq 3$. Then, we obtain $\alpha_{1}\leq 2^{3}$. As $c_{2}(dx_{1}x_{2}x_{3})={{d}\choose{2}}(6y_{1}y_{2}y_{3}+2y_{1}y_{2}+2y_{1}y_{3}+2y_{2}y_{3})\in \Gamma^{2}(X)$, we get $2(y_{1}y_{2}+y_{1}y_{3}+y_{2}y_{3})\in \Im(\res^{2/3})$ if $d=2$. If in addition $d_{ij}=4$ for some $i, j$, then by replacing $y_{i}y_{j}$ in codimension $2$ part of basis of $K(X_{E})$ with $y_{1}y_{2}+y_{1}y_{3}+y_{2}y_{3}$ we have
\[\alpha_{2}\leq
\begin{cases}
2\cdot d_{ik}\cdot d_{jk} & \text{ if } d=2, d_{ij}=4  \text{ for some } 1\leq i\neq j\leq 3,\\
d_{12}d_{13}d_{23} & \text{ otherwise}.\\
\end{cases}\]
Since $c_{2}(dx_{1}x_{2}x_{3})\cdot 2y_{1}=4{{d}\choose{2}}y_{1}y_{2}y_{3}\in \Gamma^{3}(X)$, we get $\alpha_{3}\leq 2d$. As $|K(X_{E})/K(X)|=2^{3}d_{12}d_{13}d_{23}d$, it follows from (\ref{alphalem}) that 
\begin{equation}\label{propositioncase1}
|\!\oplus \Gamma^{n/n+1}(X)_{\tors}|\leq 2\,\,  \text{ if } d=d_{ij}=2 \text{ or } d=d_{ij}=4
\end{equation}
for all $1\leq i\neq j\leq 3$. Otherwise, $|\!\oplus \Gamma^{n/n+1}(X)_{\tors}|=1$, thus, $\CH^{2}(X)_{\tors}=0$.

In the case of (\ref{propositioncase1}), the class of $dy_{1}y_{2}y_{3}$ gives a torsion of order $2$ in $\Gamma^{2/3}(X)$ as $\Gamma^{3}(X)$ is generated by $\Gamma^{1}(X)\cdot \Gamma^{2}(X)$ and any element of $\Gamma^{1}(X)\cdot \Gamma^{2}(X)$ is divisible by $2d$.  Hence, we have $\Gamma^{2/3}(X)_{\tors}=\Z/2\Z$ and $\CH^{2}(\bar{X})_{\tors}=\Z/2\Z$ by (\ref{generic}).\end{proof}

We determine the maximal torsion in Chow group of codimension $2$ of the product of $n$ conics or the product of $n$ Pfister quadric surfaces, which gives a lower bound of $\cM(\QS_{n})$.

\begin{proposition}\label{lowerboundofconics}
Let $n\geq 2$ be an integer and let $Q_{1},\ldots, Q_{n}$ be quaternion algebras satisfying $\ind(Q_{i_{1}}\tens \cdots \tens Q_{i_{m}})=2$ for all $1\leq i_{1}<\cdots <i_{m}\leq n$ and $1\leq m\leq n$. Then, the torsion subgroup $\CH^{2}(\bar{X})_{\tors}$ of the corresponding generic variety of $X=\prod_{i=1}^{n}\SB(Q_{i})$ is 
\[(\Z/2\Z)^{\oplus N}, \text{ where } N=2^{n}-({{n}\choose{2}}+n+1).\]
In particular, $\cM(\C_{n})=\cM(\PQ_{n})\geq 2^{N}$.
\end{proposition}
\begin{proof}
Let $Q_{1},\ldots, Q_{n}$ be quaternion algebras such that $\ind(Q_{i_{1}}\tens \cdots \tens Q_{i_{m}})=2$ for all $1\leq i_{1}<\cdots <i_{m}\leq n$, $1\leq m\leq n$ and $X=\prod_{i=1}^{n}\SB(Q_{i})$. Then, by (\ref{Quillenbasis}) we have a basis $\{1, 2x_{i_{1}}\cdots x_{i_{m}}\,|\, 1\leq m\leq n,\, 1\leq i_{1}<\!\cdots\!<i_{m}\leq n\}$ of $K(X)$, where for each $i\in I_{n}$, $x_{i}$ is the pullback of the class of the tautological line bundle on the projective line over a splitting field $E$. Consider another basis $\{1, 2y_{i_{1}}\cdots y_{i_{m}}\,|\, 1\leq m\leq n, 1\leq i_{1}<\!\cdots\!<i_{m}\leq n\}$, where $y_{i}=x_{i}-1$ for all $i\in I_{n}$. 

Fix $j\geq 1$. As $c_{1}(2x_{i_{1}}\cdots x_{i_{m}})$ and $c_{2}(2x_{i_{1}}\cdots x_{i_{m}})$ are divisible by $2$, any element of $\Gamma^{2j+1}(X)$ is divisible by $2^{j+1}$. Therefore, as $2y_{i_{1}}\cdots y_{i_{m}}\in \Gamma^{2}(X)$ for all $2\leq m\leq n$, we have
\begin{equation}\label{conicpf1}
2^{j}y_{i_{1}}\cdots y_{i_{m}}\in \Gamma^{2j}(X)\backslash \Gamma^{2j+1}(X)
\end{equation}
for any $2j+1\leq m\leq n$. Moreover, by multiplying $2y_{i_{m+1}}\in\Gamma^{1}(X)$ to (\ref{conicpf1}) we get
\begin{equation}\label{conicpf2}
2^{j+1}y_{i_{1}}\cdots y_{i_{m}}\in \Gamma^{2j+1}(X).
\end{equation}
for any $2j+2\leq m\leq n$. Since $c_{2}(2x_{i_{1}}x_{i_{2}})\cdots c_{2}(2x_{i_{2j-1}}x_{i_{2j}})c_{1}(2x_{i_{2j+1}})=2^{j+1}y_{i_{1}}\cdots y_{i_{2j+1}}\in \Gamma^{2j+1}(X)$, it follows from (\ref{conicpf1}) and (\ref{conicpf2}) that for any $j\geq 1$ and any $2j+1\leq m\leq n$ the class of $2^{j}y_{i_{1}}\cdots y_{i_{m}}$ generates a subgroup of $\Gamma^{2j/2j+1}(X)_{\tors}$ of order $2$. By the divisibility of an element of $\Gamma^{2j+1}(X)$, any two subgroups generated by different classes of the elements in (\ref{conicpf1}) have trivial intersection. Hence, we have
\begin{equation}\label{conicpf3}
(\Z/2\Z)^{\oplus N_{j}}\subseteq\Gamma^{2j/2j+1}(X)_{\tors}, \text{ where } N_{j}=\sum_{m=2j+1}^{n}{{n}\choose{k}}.
\end{equation}

Let $\beta_{i}=|\Gamma^{i/i+1}(X_{E})/\Im(\res^{i/i+1})|/|K^{i}(X_{E})/K^{i}(X)|$, where $K^{i}(X_{E})$ (resp. $K^{i}(X)$) is the codimension $i$ part of $K(X_{E})$ (resp. $K(X)$). Then, it follows from the base of $K(X)$ that $\beta_{1}, \beta_{2}\leq 1$. Since $c_{2}(2x_{i_{1}}x_{i_{2}})\cdots c_{2}(2x_{i_{2j-1}}x_{i_{2j}})=2^{j}y_{i_{1}}\cdots y_{i_{2j}}\in \Gamma^{2j}(X)$ and $2^{j+1}y_{i_{1}}\cdots y_{i_{2j+1}}\in \Gamma^{2j+1}(X)$, we obtain
\[\beta_{i}\leq 2^{[i+1/2]{{n}\choose{i}}}/2^{{{n}\choose{i}}}\]
for any $3\leq i\leq n$. Hence, by (\ref{alphalem}) we have
\begin{equation}\label{conicpf4}
|\!\oplus \Gamma^{i/i+1}(X)_{\tors}|\leq 2^{\sum_{i=3}^{n}([i+1/2]-1){{n}\choose{i}}}.
\end{equation}
Therefore, it follows from (\ref{conicpf3}) and (\ref{conicpf4}) that for any $j\geq 1$ we have $\Gamma^{2j/2j+1}(X)_{\tors}=(\Z/2\Z)^{N_{j}}$. As $N_{1}=N$, the result follows from (\ref{generic}).\end{proof}

\begin{corollary}\label{maximaltorsionconicdetermined}
For any $n\geq 2$, we have $\cM(\C_{n})=\cM(\PQ_{n})=2^{N}$.
\end{corollary}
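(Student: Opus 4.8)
The plan is to show that the lower bound $\cM(\C_n)=\cM(\PQ_n)\geq 2^N$ from Proposition~\ref{lowerboundofconics} is in fact an equality, by proving that $2^N$ is also an upper bound valid for \emph{every} product of $n$ conics, not just the specific ``maximally non-split'' family used to establish the lower bound. Since $\cM(\PQ_n)=\cM(\C_n)$ already by the setup in the introduction, it suffices to argue about $\C_n$. The overall strategy is to combine the explicit computation of $\Gamma^{2/3}(X)_\tors$ for the extremal variety with the reduction machinery of Section~\ref{section3}: by (\ref{generic}) the torsion of $\CH^2$ of a generic variety equals that of $\Gamma^{2/3}$, and by (\ref{tolchow}) together with \cite[Corollary 2.15]{Kar} we always have a surjection $\Gamma^{2/3}(X)_\tors\twoheadrightarrow\CH^2(X)_\tors$, so bounding $|\Gamma^{2/3}(X)_\tors|$ uniformly bounds $|\CH^2(X)_\tors|$.

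First I would fix an arbitrary product of conics $X=\prod_{i=1}^n\SB(Q_i)$ and, via the index data $\ind(\tens_{k}Q_{i_k})\in\{1,2\}$, reduce to the case where all multi-indices are nontrivial, exactly as in the opening of the proof of Proposition~\ref{threeconics}: whenever some partial tensor product splits, (\ref{reductionSB}) lets me drop or merge factors and strictly decrease $n$, so by induction such cases produce torsion of order at most $2^{N'}$ with $N'$ the value of $N$ for the smaller product, and one checks $N'\leq N$. This isolates the single remaining configuration, namely the one where every $\ind(\tens_k Q_{i_k})=2$, which is precisely the variety analyzed in Proposition~\ref{lowerboundofconics}. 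For that variety the proof of Proposition~\ref{lowerboundofconics} already pins down $\Gamma^{2/3}(X)_\tors=(\Z/2\Z)^{\oplus N}$ on the nose (the case $j=1$), so $|\CH^2(\bar X)_\tors|=2^N$ via (\ref{generic}).

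The key steps, in order, are: (i) observe $\cM(\PQ_n)=\cM(\C_n)$ and reduce the problem to products of conics; (ii) use (\ref{reductionSB}) to reduce an arbitrary configuration of indices to the all-index-$2$ configuration, handling the split sub-cases by an induction on $n$ and a monotonicity check on $N$; (iii) invoke the exact count $\Gamma^{2/3}(X)_\tors=(\Z/2\Z)^{\oplus N}$ from Proposition~\ref{lowerboundofconics} for the extremal configuration, which simultaneously gives the lower bound $2^N$ already recorded there and, through the surjection of (\ref{tolchow}), the matching upper bound for all other configurations; (iv) conclude $\cM(\C_n)=2^N$ and hence $\cM(\PQ_n)=2^N$.

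The main obstacle I expect is step~(ii): verifying that no configuration \emph{other} than the all-index-$2$ one can exceed the bound $2^N$. In Proposition~\ref{lowerboundofconics} only one specific family was treated, whereas here I must argue uniformly over all possible index patterns of the $Q_i$ and their tensor products. The delicate point is that partially split configurations may look combinatorially different, and I would need the reduction (\ref{reductionSB}) to genuinely collapse them onto strictly smaller products whose extremal torsion $2^{N'}$ is dominated by $2^N$; confirming this domination amounts to checking that $N=2^n-\binom{n}{2}-n-1$ is monotone in the right sense under deleting or identifying factors, which is an elementary but essential combinatorial verification. Once that monotonicity is in hand, the uniform upper bound follows and the corollary is immediate.
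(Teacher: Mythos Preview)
There is a genuine gap in step~(ii). You assert that $\ind(\tens_k Q_{i_k})\in\{1,2\}$ for every partial tensor product and conclude that, after reducing away the split cases via (\ref{reductionSB}), the only surviving configuration is the one with every multi-index equal to $2$. This is false: the tensor product of $m$ quaternion algebras can have index any power of $2$ up to $2^m$. Reduction by (\ref{reductionSB}) requires that the subgroup $\langle Q_1,\ldots,Q_n\rangle\subset\Br(F)$ be generated by fewer classes, which happens only when the $[Q_i]$ are linearly dependent in $\Br(F)[2]$; once they are independent there is nothing to drop, yet the indices $\ind(Q_{i_1}\tens\cdots\tens Q_{i_m})$ can still range over $\{2,4,\ldots,2^m\}$. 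For instance, with $n=3$ and generic quaternions one has $\ind(Q_i)=2$, $\ind(Q_i\tens Q_j)=4$, $\ind(Q_1\tens Q_2\tens Q_3)=8$: nothing splits and no factor is redundant, but this is not the configuration of Proposition~\ref{lowerboundofconics}, and your argument supplies no bound for it. The obstacle you anticipate (monotonicity of $N$ under deletion) is therefore not the real issue; the real issue is that reduction does not exhaust the non-extremal configurations.

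The paper's proof proceeds differently: it gives a direct, uniform upper bound valid for every $X$ with all $\ind(Q_i)=2$, regardless of the higher multi-indices. Using (\ref{gammatwo}), the group $T^3(X_E)\cap K(X)$ is generated by the elements $e_{i_1\ldots i_m}y_{i_1}\cdots y_{i_m}$ with $m\geq 3$ (where $e_{i_1\ldots i_m}$ is the maximum of the relevant indices, hence a common multiple since all are powers of $2$). One checks that $e_{i_1\ldots i_m}y_{i_1}\cdots y_{i_{m-1}}\in T^2(X)$, whence
\[
2e_{i_1\ldots i_m}y_{i_1}\cdots y_{i_m}=(2y_{i_m})\cdot\bigl(e_{i_1\ldots i_m}y_{i_1}\cdots y_{i_{m-1}}\bigr)\in T^1(X)\cdot T^2(X)\subset T^3(X).
\]
Since there are exactly $N=\sum_{m\geq 3}\binom{n}{m}$ such generators, this gives $|\CH^2(X)_\tors|\leq 2^N$ at once, with no case analysis on the individual index values. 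The case where some $\ind(Q_i)=1$ is then handled by (\ref{reductionSB}) and induction on $n$, as you suggest.
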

\begin{proof}
Let $X=\prod_{i=1}^{n}\SB(Q_{i})\in \C_{n}$ such that $\ind(Q_{i})=2$ for all $i$. Given integers $1\leq m\leq n$ and $1\leq i_{1}<\cdots <i_{m}\leq n$, we define $d_{i_{1}\ldots i_{m}}=\max\{ \ind(Q_{i_{1}}^{\tens j_{1}}\tens \cdots \tens Q_{i_{m}}^{\tens j_{m}})\}$, where the maximum ranges over $0\leq j_{1},\ldots, j_{m}\leq 1$. If $m\geq 3$, then it follows from (\ref{gammatwo}) that $d_{i_{1}\ldots i_{m}}y_{i_{1}}\cdots y_{i_{m-1}}\in T^{2}(X)$. As $2y_{i_{m}}\in T^{1}(X)$, we have 
\begin{equation}\label{corollaryequforconics}
2d_{i_{1}\ldots i_{m}}y_{i_{1}}\cdots y_{i_{m}}\in T^{3}(X).
\end{equation}
Since the subgroup $T^{3}(X_{E})\cap K(X)$ is generated by $d_{i_{1}\ldots i_{m}}y_{i_{1}}\cdots y_{i_{m}}$ for all $3\leq m\leq n$, it follows from (\ref{tolchow}), (\ref{gammatwo}), and (\ref{corollaryequforconics}) that $|\CH^{2}(X)_{\tors}|\leq 2^{N}$. If $\ind(Q_{i})=1$ for some $i$, then by (\ref{reductionSB}) the computation of the upper bound for the torsion subgroup reduces to the case of product of less than $n$ conics. Hence, $\cM(\C_{n})\leq 2^{N}$, thus by Proposition \ref{lowerboundofconics} we obtain the result.\end{proof}

\subsection{Four Conics}

Let $Q_{i}$ be a quaternion algebra over a field $F$ for all $i\in I_{4}$. Consider the product $X$ of the corresponding conics $\SB(Q_{i})$. If $\ind(Q_{i})=1$ for some $i$, then by (\ref{reductionSB}) the problem to find torsion in $\CH^{2}(X)$ is reduced to the case of product of three conics (Proposition \ref{threeconics}). Hence, we may assume that $\ind(Q_{i})=2$ for all $i$. Let $d_{ij}=\ind(Q_{i}\tens Q_{j})$, $d_{i}=\ind(Q_{j}\tens Q_{k}\tens Q_{l})$, and $d=\ind(Q_{1}\tens Q_{2}\tens Q_{3}\tens Q_{4})$ for all distinct integers $i$, $j$, $k$, $l\in I_{4}$. For the same reason, we may assume that $d_{ij}$, $d_{i}$, $d\geq 2$. By (\ref{Quillenbasis}), we have the following basis of $K(X)$ of $16$ elements
\begin{equation}\label{basisfourconicsx}
\{1,\, 2x_{i}, d_{ij}x_{i}x_{j},\, d_{i}x_{j}x_{k}x_{l},\, dx_{1}x_{2}x_{3}x_{4}\},
\end{equation}
where $x_{i}$ is the pullback of the class of the tautological line bundle on the projective line for all $i\in I_{4}$.
By substitution $y_{i}=x_{i}-1$ for all $i\in I_{4}$, it follows from (\ref{basisfourconicsx}) that we have another basis $K(X)$
\begin{equation}\label{basisfourconicsy}
\{1,\, 2y_{i}, d_{ij}y_{i}y_{j},\, d_{i}(y_{j}y_{k}y_{l}+y_{j}y_{k}+y_{j}y_{l}+y_{k}y_{l}),\, d(y_{1}y_{2}y_{3}y_{4}+\sum y_{p}y_{q}y_{r}+\sum y_{p}y_{q})\},
\end{equation}
where the sums range over all $1\leq p<q<r\leq 4$.

We will frequently use the following lemma to find bounds for the torsion in $\CH^{2}(X)$.

\begin{lemma}\label{fourconicslem}
Let $i, j, k, l$ be distinct integers in $I_{4}$. Then, in codimension $2$, $3$, and $4$ respectively, we have
\smallskip

\noindent $(1)\, \Im(\res^{2/3})\ni
\begin{cases}
2\sum y_{p}y_{q} & \text{ if } d=2,\\
2(y_{i}y_{j}+y_{i}y_{k}+y_{j}y_{k}) & \text{ if } d_{l}=2,\\
\end{cases}
$

\medskip

\noindent $(2)\, \Im(\res^{3/4})\ni
\begin{cases}
4y_{i}y_{j}y_{k},\, 4y_{i}y_{j}y_{l} & \text{ if } d_{ij}=2,\\
4y_{i}y_{j}y_{k}, -4y_{i}y_{j}y_{k}+4\sum y_{p}y_{q}y_{r} & \text{ if } d_{l}=2,\\
4y_{i}y_{j}y_{k},\,4y_{i}y_{j}y_{l},\,4y_{i}y_{k}y_{l} & \text{ if } d_{ij}=d_{ik}=2.
\end{cases}$

\medskip

\noindent $(3)\, \Gamma^{4}(X)\ni
\begin{cases}
8y_{1}y_{2}y_{3}y_{4} & \text{ if } d=2,4 \text{ or } d_{ij}=2 \text{ or } d_{i}=2,\\
4y_{1}y_{2}y_{3}y_{4} & \text{ if } d_{ij}=d_{kl}=2,
\end{cases}$

\smallskip

\noindent where the sums range over all $1\leq p<q<r\leq 4$.
\end{lemma}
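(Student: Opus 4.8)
The plan is to realize each displayed element as a product, or a Chern class, of elements already known to sit in low pieces of the gamma filtration, all computed inside $K(X_E)=\Z[x_1,\dots,x_4]/((x_i-1)^2)$, where $y_i^2=0$. Three inputs will be used throughout. First, by (\ref{gammatwo}) any basis element of (\ref{basisfourconicsy}) whose lowest-degree part has degree $\ge 2$ already lies in $\Gamma^2(X)$; thus $2y_i\in\Gamma^1(X)$, $g_{ij}y_iy_j\in\Gamma^2(X)$, and $v_l:=h_l(y_iy_jy_k+y_iy_j+y_iy_k+y_jy_k)\in\Gamma^2(X)$, and the top class $\xi:=dx_1x_2x_3x_4$ lies in $K(X)$ (it is a basis vector). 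Second, the Chern classes $c_j(\eta)=\gamma_j(\eta-\rank\eta)$ land in $\Gamma^j(X)$ for $\eta\in K(X)$. Third, $\Gamma^a(X)\cdot\Gamma^b(X)\subseteq\Gamma^{a+b}(X)$. The one element available in \emph{every} case is $(2y_1)(2y_2)(2y_3)(2y_4)=16\,y_1y_2y_3y_4\in\Gamma^4(X)$, so the role of each hypothesis is to supply a second element of $\Gamma^4(X)$ whose $y_1y_2y_3y_4$-coefficient is not divisible by $16$; subtracting off multiples of $16$ then lands on $8$ (or on $4$).

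\textbf{The direct branches.}
For the first bullet I would dispose of the small-index cases by hand. If $|G|\ge 1$, say $g_{ij}=2$, then $(g_{ij}y_iy_j)(2y_k)(2y_l)=8\,y_1y_2y_3y_4\in\Gamma^4(X)$. If $|H_2|\ge 1$, say $h_l=2$ with $\{i,j,k\}=I_4\setminus\{l\}$, then only the $y_jy_k$-term of $v_l$ survives multiplication by $y_iy_l$, so $v_l\cdot(g_{il}y_iy_l)=2g_{il}\,y_1y_2y_3y_4\in\Gamma^4(X)$; since $Q_i\tens Q_l$ is a biquaternion algebra, $g_{il}\in\{2,4\}$, and either $g_{il}=4$ gives $8\,y_1y_2y_3y_4$ directly or $g_{il}=2$ falls back to the previous branch. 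The second bullet is immediate: if $|G\cap J_m|=2$ then $J_m$ is a pair of complementary index sets $\{p,q\},\{r,s\}$ with $g_{pq}=g_{rs}=2$, whence $(g_{pq}y_py_q)(g_{rs}y_ry_s)=4\,y_1y_2y_3y_4\in\Gamma^4(X)$.

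\textbf{The hard branch, and the main obstacle.}
The delicate case, which I expect to be the genuine obstacle, is $d\in\{2,4\}$ with no small pair or triple index present: there every product of the explicit basis vectors of (\ref{basisfourconicsy}) has $y_1y_2y_3y_4$-coefficient divisible by $16$, so one cannot reach $8$ by multiplying basis elements. The way around this is to pass to a true Chern class of $\xi=dx_1x_2x_3x_4\in K(X)$. Over $E$ one has $\xi=d[L]$ for the line bundle $L$ with $[L]=x_1x_2x_3x_4$, so the total Chern class is $c_t(\xi)=(1+c_1(L)\,t)^{d}$, giving $c_j(\xi)=\binom{d}{j}c_1(L)^{j}$, where $c_1(L)=\prod_i(1+y_i)-1=\sum_{\varnothing\ne T\subseteq I_4}y_T$. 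The key computation is $c_1(L)^4=24\,y_1y_2y_3y_4$: a nonzero degree-$4$ monomial forces four pairwise-disjoint nonempty index sets, hence four singletons, contributing $4!=24$ ordered choices. Thus for $d=4$ one uses $c_4(\xi)=c_1(L)^4=24\,y_1y_2y_3y_4\in\Gamma^4(X)$, while for $d=2$ one uses $c_2(\xi)=c_1(L)^2\in\Gamma^2(X)$ and squares it, getting $c_2(\xi)^2=c_1(L)^4=24\,y_1y_2y_3y_4\in\Gamma^4(X)$. In both cases $24\,y_1y_2y_3y_4\in\Gamma^4(X)$, and combining with the universal $16\,y_1y_2y_3y_4$ yields $(24-16)\,y_1y_2y_3y_4=8\,y_1y_2y_3y_4\in\Gamma^4(X)$, finishing the first bullet. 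The only nonformal insight is that the Chern-class route—not plain multiplication of basis vectors—is what breaks the divisibility-by-$16$ barrier when $d=4$.
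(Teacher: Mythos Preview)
Your argument for part (3) is correct and follows the paper's line: products of the basis elements in $\Gamma^2$ and $\Gamma^1$ handle the $|G|\ge1$, $|H_2|\ge1$, and $|G\cap J_m|=2$ branches, and the Chern class $c_4(4x_1x_2x_3x_4)=c_1(L)^4=24\,y_1y_2y_3y_4$ handles $d=4$. For $d=2$ you square $c_2(2x_1x_2x_3x_4)=c_1(L)^2$ to reach $24\,y_1y_2y_3y_4$ and then subtract $16$; the paper instead multiplies $c_2(2x_1x_2x_3x_4)$ by $c_1(2x_i)c_1(2x_j)$ and lands on $8\,y_1y_2y_3y_4$ directly---a cosmetic difference.

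The gap is that you have not touched parts (1) and (2). These are not automatic; each clause still requires exhibiting an explicit element of $\Gamma^2(X)$ or $\Gamma^3(X)$ and reading off its image in the appropriate graded piece. The ingredients in your strategy paragraph do suffice, but you must carry them out. For (1): the degree-$2$ component of the basis element $v_l$ gives the $h_l=2$ clause, and the degree-$2$ component of $c_2(2x_1x_2x_3x_4)=c_1(L)^2$ is $2\sum y_py_q$, giving the $d=2$ clause. For (2): the $g_{ij}=2$ clause is $(2y_iy_j)(2y_k)$ and $(2y_iy_j)(2y_l)$; the $h_l=2$ clause is $v_l\cdot(2y_k)=4y_iy_jy_k$ together with $v_l\cdot(2y_l)\equiv -4y_iy_jy_k+4\sum y_py_qy_r$ modulo $\Gamma^4$; and for $d=2$ the paper uses $c_1(2x_i)\cdot c_2(2x_1x_2x_3x_4)$ and $c_1(2x_1x_2x_3x_4)\cdot c_2(2x_1x_2x_3x_4)$ combined with $8\sum y_py_qy_r\in\Gamma^3(X)$. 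The remaining clauses of (2) (namely $g_{ij}=g_{ik}=2$, $|G|\ge4$, $|G\cap J_m|=2$, $|G\cap K_i|=3$) are then formal combinations of the $g_{ij}=2$ case. All of this should be written out to complete the proof.
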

\begin{proof}
If $d=2$, then we have
\begin{equation}\label{fourconicsneweqd2}
c_{2}(2x_{1}x_{2}x_{3}x_{4})=2(y_{1}y_{2}y_{3}y_{4}+\sum y_{p}y_{q}y_{r}+\sum y_{p}y_{q})\in \Gamma^{2}(X),
\end{equation}
thus, we obtain $c_{1}(2x_{i})c_{1}(2x_{j})c_{2}(2x_{1}x_{2}x_{3}x_{4})=8y_{1}y_{2}y_{3}y_{4}\in \Gamma^{4}(X)$, which imply the results in Lemma \ref{fourconicslem} $(1)$, $(3)$, respectively. If $d_{l}=2$, then it follows from (\ref{gammatwo}) and (\ref{basisfourconicsy}) that
\begin{equation}\label{fourconicslemeq1}
2(y_{i}y_{j}y_{k}+y_{i}y_{j}+y_{i}y_{k}+y_{j}y_{k})\in \Gamma^{2}(X),
\end{equation}
which completes the proof of Lemma \ref{fourconicslem} $(1)$. Multiplying (\ref{fourconicslemeq1}) by $c_{1}(2x_{k})$ and $c_{1}(2x_{l})$, respectively, we obtain the results in Lemma \ref{fourconicslem} $(2)$. Multiplying $4y_{i}y_{j}y_{k}$ by $c_{1}(2x_{l})$, we have the result in Lemma \ref{fourconicslem} $(3)$.

If $d_{ij}=2$ for some $i$ and $j$, then it follows from (\ref{basisfourconicsy}) and (\ref{gammatwo}) that 
\begin{equation}\label{fourconicslemeq2}
2y_{i}y_{j}c_{1}(2x_{k}),\, 2y_{i}y_{j}c_{1}(2x_{l})\in \Gamma^{3}(X). 
\end{equation}
Multiplying the first element in (\ref{fourconicslemeq2}) by $c_{1}(2x_{l})$, we obtain the result in Lemma \ref{fourconicslem} $(3)$. If in addition $d_{kl}=2$, then, again by (\ref{gammatwo}) we obtain $2y_{i}y_{j}\cdot (2y_{k}y_{l})=4y_{1}y_{2}y_{3}y_{4}\in \Gamma^{4}(X)$. If $d=4$, then we have $c_{4}(4x_{1}x_{2}x_{3}x_{4})=24y_{1}y_{2}y_{3}y_{4}\in \Gamma^{4}(X)$. As $16y_{1}y_{2}y_{3}y_{4}\in \Gamma^{4}(X)$, we complete the proof of Lemma \ref{fourconicslem} $(3)$. The case $d_{ij}=d_{ik}=2$ in $(2)$ follows from the previous result for $d_{ij}=2$, which completes the proof of Lemma \ref{fourconicslem} $(2)$.\end{proof}

\begin{corollary}\label{fourconicslem2}
For all $1\leq p< q< r\leq 4$, $4y_{p}y_{q}y_{r}\in \Im(\res^{3/4})$ if one of the following conditions holds$:$ for some distinct $i, j, k, l\in I_{4}$
\begin{itemize}
\item $d=2$
\item $d_{ij}=d_{kl}=2$
\item $d_{ij}=d_{i}=2$
\item $d_{i}=d_{j}=d_{k}=2$
\item $d_{ij}=d_{ik}=d_{jk}=2$
\end{itemize}
\end{corollary}
\begin{proof}
Assume $d=2$. Then, by (\ref{fourconicsneweqd2}), we obtain
$$c_{1}(2x_{i})c_{2}(2x_{1}x_{2}x_{3}x_{4})=12y_{1}y_{2}y_{3}y_{4}+4y_{i}(y_{j}y_{k}+y_{j}y_{l}+y_{k}y_{l})\in \Gamma^{3}(X) \text { and }$$ 
\[c_{1}(2x_{1}x_{2}x_{3}x_{4})c_{2}(2x_{1}x_{2}x_{3}x_{4})=72y_{1}y_{2}y_{3}y_{4}+12\sum y_{p}y_{q}y_{r}\in \Gamma^{3}(X).\]
As $8\sum y_{p}y_{q}y_{r}\in \Gamma^{3}(X)$, the result follows. The remaining cases follow from Lemma \ref{fourconicslem} $(2)$.\end{proof}
\begin{remark}
If the number of $d_{ij}x_{i}x_{j}$'s in the basis (\ref{basisfourconicsx}) such that $d_{ij}=2$ is at least $4$, then by the second condition of Corollary \ref{fourconicslem2} we also have $4y_{p}y_{q}y_{r}\in \Im(\res^{3/4})$ for all $1\leq p< q< r\leq 4$.\end{remark}

We determine the codimension $2$ cycles of the product of $4$ conics as in Proposition \ref{threeconics}.
\begin{theorem}\label{fourconicsthm}
The torsion in Chow group of codimension $2$ of the product $X$ of $4$ conics is trivial if it satisfies one of the conditions $(\ref{fourconicsthmcase1})$, $(\ref{fourconicsthmcase2})$, $(\ref{fourconicsthmcase3})$, $(\ref{fourconicsthmcase4})$, $(\ref{fourconicsthmcase6})$, $(\ref{fourconicsthmcase8})$, $(\ref{fourconicsthmcase7})$,   $d=16$. Otherwise, the group $\CH^{2}(X)_{\tors}$ admits all elementary abelian group whose order is a divisor of $2^{5}$.
\end{theorem}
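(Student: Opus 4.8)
The plan is to compute $\Gamma^{2/3}(X)_{\tors}$ from the exact-sequence formula (\ref{alphalem}) together with the explicit basis (\ref{basisfourconicsy}), and then to transfer the answer to $\CH^2$ of the generic variety via (\ref{generic}). First I would read off from the coefficients of (\ref{basisfourconicsx}) that
\[
|K(X_E)/K(X)| = 2^{4}\cdot\prod_{ij}g_{ij}\cdot\prod_{i}h_{i}\cdot d .
\]
Restricting attention to codimension $2$, the cokernel $\Gamma^{2/3}(X_E)/\Im(\res^{2/3})$ is the quotient of the free group on the six monomials $y_iy_j$ by the sublattice generated by the leading terms $g_{ij}y_iy_j$ (the products $4y_iy_j=c_1(2x_i)c_1(2x_j)$ coming from codimension $1$ are redundant, since $g_{ij}\mid 4$), so it has order $\prod_{ij}g_{ij}$ \emph{before} imposing the extra relations supplied by Lemma \ref{fourconicslem}. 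The codimension-$3$ and codimension-$4$ cokernels are likewise governed by the $h_i$ and by $d$. Feeding these orders into (\ref{alphalem}) and splitting the identity codimension by codimension, as in Proposition \ref{lowerboundofconics}, produces an a priori upper bound for $\Gamma^{2/3}(X)_{\tors}$.

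The heart of the argument is the case analysis deciding, for each configuration of the invariants $g_{ij},h_i,d$, which codimension-$2$ torsion survives. Here Lemma \ref{fourconicslem} does the work: parts $(1)$ and $(2)$ push extra elements into $\Im(\res^{2/3})$ and $\Im(\res^{3/4})$ whenever some $g_{ij}=2$, $h_l=2$, or $d=2$, while part $(3)$ forces $8y_1y_2y_3y_4$ (respectively $4y_1y_2y_3y_4$) into $\Gamma^{4}(X)$. In the trivial cases listed in the statement — organized through the sets $H_m$, $G$, and the decompositions $J=J_1\cup J_2\cup J_3=K_i\cup L_i$ — I would substitute these containments into (\ref{alphalem}); the additional relations collapse the relevant cokernel, forcing $\Gamma^{2/3}(X)_{\tors}=0$ and hence $\CH^2(\bar X)_{\tors}=0$ by (\ref{generic}). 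The case $d=16$ is handled separately, since then $\bar X$ is as nonsplit as possible in codimension $4$ and no codimension-$2$ torsion can be manufactured.

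For the complementary configurations I would establish the lower bound by exhibiting explicit torsion classes, following Proposition \ref{lowerboundofconics} with $n=4$. When all combined indices equal $2$, the classes $2y_jy_ky_l$ (four of them) and $2y_1y_2y_3y_4$ lie in $\Gamma^{2}(X)\setminus\Gamma^{3}(X)$ but become $2$-torsion in $\Gamma^{2/3}(X)$, precisely because any element of $\Gamma^{3}(X)$ assembled from the generators $2y_i$ carries a higher power of $2$; this is the divisibility mechanism of (\ref{conicpf1}), and it yields $N=2^{4}-(\binom{4}{2}+4+1)=5$ independent classes of order $2$. For larger values of $g_{ij},h_i,d$ the corresponding generators acquire more factors of $2$, and Lemma \ref{fourconicslem}$(3)$ detects exactly when $4y_1y_2y_3y_4$ or $8y_1y_2y_3y_4$ is forced into $\Gamma^{4}(X)$, i.e.\ when one of these classes dies. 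Matching the surviving classes against the upper bound from (\ref{alphalem}) pins down $\Gamma^{2/3}(X)_{\tors}$, and (\ref{generic}) upgrades the answer to $\CH^2(\bar X)_{\tors}=(\Z/2\Z)^{\oplus 5}$ throughout the nontrivial range; realizability of every elementary abelian $2$-group of order dividing $2^5$ then follows by specializing the invariants.

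The main obstacle I expect is the bookkeeping in the upper bound: showing that in each nontrivial configuration the containments of Lemma \ref{fourconicslem} account for \emph{exactly} the excess predicted by $|K(X_E)/K(X)|$, so that the inequality coming from (\ref{alphalem}) is an equality and no torsion is over- or under-counted. This requires tracking, for every distribution of the values of $g_{ij}$ across the decompositions $J=J_1\cup J_2\cup J_3=K_i\cup L_i$ and of the $h_i$ across $H_2,H_4,H_8$, precisely which products of basis elements land in $\Gamma^{3}(X)$ and $\Gamma^{4}(X)$ — the combinatorial core that the sets $G$ and $H_m$ are designed to organize.
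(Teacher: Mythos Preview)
Your overall plan coincides with the paper's: compute $|K(X_E)/K(X)|$ from (\ref{basisfourconicsx}), bound the cokernels $\Gamma^{i/i+1}(X_E)/\Im(\res^{i/i+1})$ codimension by codimension using Lemma \ref{fourconicslem}, feed everything into (\ref{alphalem}), and run a case analysis organized by $G$, $H_m$, and the decompositions $J=J_1\cup J_2\cup J_3=K_i\cup L_i$. The paper does exactly this, structuring the cases by the triple $(|H_2|,|H_4|,|H_8|)$ (eight cases, plus $d=16$ separately) and tracking the auxiliary quantities $|G\cap J_m|$, $|G\cap K_i|$, $|G_2|$, $|G_4|$, $|H_2'|$ within each.

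There is, however, a substantive error in your description of the nontrivial range. You write that (\ref{generic}) ``upgrades the answer to $\CH^2(\bar X)_{\tors}=(\Z/2\Z)^{\oplus 5}$ throughout the nontrivial range''. This is false: in the paper the torsion \emph{varies} across the nontrivial configurations, and the second assertion of the theorem is proved by exhibiting five distinct index patterns realizing $\Gamma^{2/3}(X)_{\tors}$ of orders $2,2^2,2^3,2^4,2^5$ respectively. Concretely, $|H_8|=4$, $d=8$ gives $\Z/2\Z$; $|H_8|=3$, $h_i=4$, $d=4$ gives $(\Z/2\Z)^{\oplus 2}$; $|H_8|=1$, $|G|=6$, $d=8$ gives $(\Z/2\Z)^{\oplus 3}$; $|H_4|=4$, $|G|=0$, $d=8$ gives $(\Z/2\Z)^{\oplus 4}$; and $|H_4|=4$, $|G|=0$, $d=4$ (or the all-indices-$2$ case of Proposition \ref{lowerboundofconics}) gives $(\Z/2\Z)^{\oplus 5}$. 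Each of these requires a separate lower-bound argument: one must name the specific torsion generators (e.g.\ $8y_1y_2y_3y_4$, or $h_iy_jy_ky_l$, or suitable sums) and check, using the divisibility of $\Gamma^3(X)$ by the appropriate power of $2$, that they are independent in $\Gamma^{2/3}$. Your plan to invoke only Proposition \ref{lowerboundofconics} with $n=4$ handles the last of these but not the others, and ``specializing the invariants'' does not produce them either, since each arises from a genuinely different index configuration for the generic $\bar X$.

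A second point: in the trivial cases the paper does not merely show that Lemma \ref{fourconicslem} ``collapses the relevant cokernel'' in codimension $2$. Rather, it bounds the product $\beta_1\beta_2\beta_3\beta_4$ by $1$, which via (\ref{alphalem}) forces the \emph{total} torsion $\oplus_i\Gamma^{i/i+1}(X)_{\tors}$ to vanish. This matters because excess in codimension $3$ or $4$ could in principle be offset by codimension-$2$ torsion; one must check that the bounds in all codimensions conspire. Your outline gestures at this, but the actual verification that the listed conditions (\ref{fourconicsthmcase1})--(\ref{fourconicsthmcase8}) are precisely those where $\prod_i\beta_i\le 1$ is the bulk of the work, and it is not automatic from the containments you cite.
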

\begin{proof}
Let $Q_{i}$ be a quaternion division algebra over $F$ and $X$ the product of the corresponding conics $\SB(Q_{i})$ for $i\in I_{4}$. Set \[|K^{i}(X_{E})/K^{i}(X)|\cdot \beta_{i}=|\Gamma^{i/i+1}(X_{E})/\Im(\res^{i/i+1})|,\] where $E$ is a splitting field of $X$ and $K^{i}(X_{E})$ (resp. $K^{i}(X)$) is the codimension $i$ part of $K(X_{E})$ (resp. $K(X)$). We find upper bounds of $\beta_{i}$ (equivalently, $|\Gamma^{i/i+1}(X_{E})/\Im(\res^{i/i+1})|$) using case by case analysis. Note that we have $\beta_{1}\leq 1$.

For convenience of the reader we set up some notations. For $n=2, 4, 8$, we set
\[ D_{n}=|\{1\leq i\leq 4 \,|\, d_{i}=n\}|\quad\text{ and }\quad D=|\{ij\in \{12,\, 13,\, 14,\, 23,\, 24,\, 34\}\,|\, d_{ij}=2\}|. \]
We begin with some observations on $d$: if $d=2^{4}$, then we have $d_{i}=d/2$ for all $i$, which implies that $d_{ij}=d/4$ for all $1\leq i\neq j\leq 4$. Therefore, we obtain $\beta_{i}\leq 1$ for all $i$, thus, $|\oplus\Gamma^{i/i+1}(X)_{\tors}|\leq 1$, i.e., the group $\CH^{2}(X)_{\tors}$ is trivial. Hence, we may assume that $d=2, 4, 8$. Note also that we have $d=2$ or $4$ $\text{ if } D_{2}\geq 1, \text{ and } D_{8}=0 \text{ if } d=2$, thus we only consider the cases where $d=4, 8$ (resp. $d=2, 4$) in the first $3$ cases (resp. the last $4$ cases) of the following. We divide the proof into  $8$ cases according to $D_{n}$.


\framebox{{\it Case}: $D_{8}\geq 3$.} In this case, we have $d_{ij}=4$ for all $i\neq j$ (i.e., $D=0$) and $0\leq D_{2}\leq 1$. It follows from Lemma \ref{fourconicslem} $(1)$ that $4^{6}\cdot \beta_{2}\leq 4^{5}\cdot 2^{2-D_{2}}$. More precisely, if $d_{i}=2$, we first replace one of $y_{j}y_{k}, y_{j}y_{l}, y_{k}y_{l}$ in codimension $2$ part of basis of $K(X_{E})$ by $y_{j}y_{k}+y_{j}y_{l}+y_{k}y_{l}$ and then apply Lemma \ref{fourconicslem} $(1)$. Unless otherwise specified, for the rest of the proof we apply the same trick when we use Lemma \ref{fourconicslem} and Corollary \ref{fourconicslem2} without mentioning this. By Lemma \ref{fourconicslem} $(2)$, we obtain
\[\beta_{3}\leq
(8^{2}\cdot 4^{2})/8^{3}\cdot 2\,\, \text{ (resp. }\leq 2^{D_{4}}) \text{ if } D_{2}=1 \text{ (resp. otherwise)}.\]
It follows by Lemma \ref{fourconicslem} $(3)$ that $\beta_{4}\leq 2$. Hence, by (\ref{alphalem}), (\ref{secondstatementone}), and (\ref{secondstatementtwo})  the order of the group $\oplus\Gamma^{i/i+1}(X)_{\tors}$ is nontrivial except the case where
\begin{equation}\label{fourconicsthmcase1}
D_{2}=1, D_{8}=3, d=4.
\end{equation}

In the remaining cases, if $D_{8}=4$, $d=8$ (resp. $d=4$), then $\Gamma^{3}(X)$ is generated by $\{16y_{1}y_{2}y_{3}y_{4}, 8y_{i}y_{j}y_{k}\}$ (resp. $\{8y_{1}y_{2}y_{3}y_{4}, 8y_{i}y_{j}y_{k}\}$) for all distinct $i, j, k\in I_{4}$ over $K(X)$, thus by the divisibility of elements in $\Gamma^{3}(X)$ the class of $8y_{1}y_{2}y_{3}y_{4}\,  (\text{resp. } 4y_{1}y_{2}y_{3}y_{4})\in \Gamma^{2}(X)\backslash \Gamma^{3}(X)$ gives a torsion element of $\Gamma^{2/3}(X)$ of order $2$. As $\beta_{1}\beta_{2}\beta_{3}\beta_{4}\leq 2$, we have 
\begin{equation}\label{secondstatementone}
\Gamma^{2/3}(X)_{\tors}=\Z/2\Z.
\end{equation}
Similarly, if $D_{8}=3$, $d_{i}=4$, and $d=4$ (resp. $d=8$) for some $i$, then we obtain 
\begin{equation}\label{secondstatementtwo}
\Gamma^{2/3}(X)_{\tors}=(\Z/2\Z)^{\oplus 2}\,\,\,\, \text{(resp. }\Gamma^{2/3}(X)_{\tors}=\Gamma^{3/4}(X)_{\tors}=\Z/2\Z)
\end{equation}
generated by the classes of $4y_{j}y_{k}y_{l}$ and $4(y_{i}y_{j}y_{k}+y_{i}y_{j}y_{l}+y_{i}y_{k}y_{l}+y_{1}y_{2}y_{3}y_{4})$ (resp. $8y_{1}y_{2}y_{3}y_{4}$).

\framebox{{\it Case}: $D_{8}=2$.} A simple calculation of index implies that $0\leq D\leq 1$. Hence, it follows from Lemma \ref{fourconicslem} $(1)$ that
\begin{equation}\label{fourconicsthmcase2beta2}
(4^{6-D}\cdot 2^{D}) \beta_{2}\leq 4^{6-D-D_{2}}\cdot 2^{D+D_{2}}, \text{ i.e., }\,\beta_{2}\leq 1/2^{D_{2}}. 
\end{equation}

Assume $D=0$. If $D_{2}=0$, then $\beta_{3}\leq 8^{4}/(8^{2}\cdot 4^{2})$. Otherwise, by Lemma \ref{fourconicslem} $(2)$ we obtain
\[(8^{2}\cdot 4^{2-D_{2}}\cdot 2^{D_{2}}) \beta_{3}\leq 8^{3-D_{2}}\cdot 4^{1+D_{2}} \quad \text{ for }D_{2}=1,2.\]

Now we assume $D=1$. If $D_{2}=0$, then by Lemma \ref{fourconicslem} $(2)$,  $(8^{2}\cdot 4^{2})\beta_{3}\leq 8^{2}\cdot 4^{2}$. Otherwise, by Corollary \ref{fourconicslem2} and Lemma \ref{fourconicslem} $(2)$ respectively, for $D_{2}=1,2$ one has
\[(8^{2}\cdot 4^{2-D_{2}}\cdot 2^{D_{2}}) \beta_{3}\leq
\begin{cases}
4^{4} & \text{ if } d_{i}=d_{ij}=2 \text{ for some } 1\leq i\neq j\leq 4,\\
4^{3}\cdot 8 & \text{ otherwise.} 
\end{cases}\]

It follows from Lemma \ref{fourconicslem} $(3)$ that \begin{equation}\label{fourconicsthmcase2beta4}\beta_{4}\leq 1\, \text{ (resp.}\leq 2) \text{ if } D\geq1, D_{2}=0, d=8 \text{ (resp. otherwise)}.\end{equation}
Therefore, it follows by the same argument as in the previous case that $|\!\oplus\Gamma^{i/i+1}(X)_{\tors}|$ is trivial in the following cases:
\begin{equation}\label{fourconicsthmcase2}
D_{2}=2; \text{ or } D=D_{2}=1; \text{ or } D=1, D_{2}=0, d=8.
\end{equation}

\framebox{{\it Case}: $D_{8}=1$.} By this assumption, we have $0\leq D\leq 3$. Note also that if $d_{ij}=d_{kl}=2$ or $d_{ij}=d_{ik}=d_{jk}=2$ for some distinct $i, j, k, l\in I_{4}$, then $D_{8}=0$. It follows by the same argument as in the previous case that
we have the same upper bounds in (\ref{fourconicsthmcase2beta2}) and (\ref{fourconicsthmcase2beta4}) for $\beta_{2}$ and $\beta_{4}$, respectively. 

Assume $D_{2}=3$. Then, by Corollary \ref{fourconicslem2} one has $\beta_{3}\leq 4^{4}/(8\cdot 2^{3})=4$. From now on we only consider the remaining cases: $0\leq D_{2}\leq 2$. If $d_{i}=d_{ij}=2$ (thus, $D, D_{2}\geq 1$) for some $i, j$, then it follows from Corollary \ref{fourconicslem2} that $$(8\cdot 4^{3-D_{2}}\cdot 2^{D_{2}}) \beta_{3}\leq 4^{4}.$$
Otherwise, by Lemma \ref{fourconicslem} $(2)$ we have
\[(8\cdot 4^{3-D_{2}}\cdot 2^{D_{2}}) \beta_{3}\leq
\begin{cases}
8^{4} & \text{ if } D_{2}=D=0,\\
8^{2}\cdot 4^{2} & \text{ if } D_{2}+D=1,\\
8\cdot 4^{3} & \text{ if } D_{2}+D=2, 3.
\end{cases}\]

By the same argument, we conclude that the order of the torsion $|\!\oplus\Gamma^{i/i+1}(X)_{\tors}|$ is trivial in the following cases: for some distinct $i, j\in I_{4}$
\begin{equation}\label{fourconicsthmcase3}
D_{2}=0, D\geq 2, d=8 ; \text{ or } 1\leq D_{2}\leq 2, D=1, d_{i}=d_{ij}=2; \text{ or } 1\leq D_{2}\leq 2, D\geq 2; \text{ or }  D_{2}=3.\end{equation}

If $D=6$, $d=8$, and $d_{i}=8$, then by the same argument as above we obtain 
\begin{equation}\label{secondstatementthree}
\Gamma^{2/3}(X)_{\tors}=(\Z/2\Z)^{\oplus 3}\text{ and }\Gamma^{3/4}(X)_{\tors}=\Z/2\Z
\end{equation}
 generated by $4y_{i}y_{k}y_{l}$, $4y_{i}y_{j}y_{l}$, $4y_{i}y_{j}y_{k}$, and $8y_{1}y_{2}y_{3}y_{4}$, respectively.


\framebox{{\it Case}: $D_{4}=4$.} By Lemma \ref{fourconicslem} $(1)$, it immediately follows that
\[\beta_{2}\leq 1/2\,\,\, (\text{resp.}\leq 1 ) \text{ if } d=2, D\neq 6 \,\,\,(\text{resp. otherwise}).\]

Similarly, it follows from Corollary \ref{fourconicslem2} and Lemma \ref{fourconicslem} $(2)$ that
\[4^{4}\cdot \beta_{3}\leq
\begin{cases}
4^{4}& \text{ otherwise,} \\
8^{2}\cdot 4^{2} \,\,(\text{resp. }8^{4})& \text{ if } d\neq 2, D=1 \,\,(\text{resp. }D=0),\\
8\cdot 4^{3}& \text{ if } d\neq 2, d_{ij}=d_{ik}=2, d_{jk}=d_{jl}=d_{kl}=4 
\end{cases}
\]
for some $i, j, k, l$. By Lemma \ref{fourconicslem} $(3)$ and (\ref{basisfourconicsy}), we obtain
\[\beta_{4}\leq
\begin{cases}
1 & \text{ if }d=8, D\geq 1; \text{ or } d=4, d_{ij}=d_{kl}=2 \text{ for some }  i, j, k, l\in I_{4}, \\
2 & \text{ otherwise,}\\
2^{2}& \text{ if }d=2, d_{ij}+d_{kl}\neq 4 \text{ for all }  i, j, k, l.
\end{cases}
\]
Applying the same argument, we see that the order of $\oplus\Gamma^{i/i+1}(X)_{\tors}$ is trivial in the following cases: for some distinct $i, j, k, l\in I_{4}$
\begin{equation}\label{fourconicsthmcase4}
d\neq 2, d_{ij}=d_{kl}=2; \text{ or } d=4, d_{ij}=d_{ik}=d_{jk}=2; \text{ or } d=2, D\neq 6, d_{ij}=d_{kl}=2.
\end{equation}

If $D=0$ and $d=4$, then by (\ref{basisfourconicsy}) and (\ref{gammatwo}) one has $d_{i}y_{j}y_{k}y_{l}$, $4y_{1}y_{2}y_{3}y_{4}\in\Gamma^{2}(X)\backslash \Gamma^{3}(X)$ for all $i, j, k, l$ since any element of $\Gamma^{3}(X)$ is divisible by $2^{3}$. Therefore, by (\ref{basisfourconicsy}) and Lemma \ref{fourconicslem} $(3)$ the classes of $d_{i}y_{j}y_{k}y_{l}$ and $4y_{1}y_{2}y_{3}y_{4}$ give torsion elements of $\Gamma^{2/3}(X)$ of order $2$. Moreover, the subgroups generated by these classes have trivial intersection by the divisibility of an element of $\Gamma^{3}(X)$. Hence,
it follows from Corollary \ref{maximaltorsionconicdetermined} that
\begin{equation}\label{fourconicsthmcase4prime}
\Gamma^{2/3}(X)_{\tors}=(\Z/2\Z)^{\oplus 5}, \text{ thus } \CH^{2}(\bar{X})_{\tors}=(\Z/2\Z)^{\oplus 5},
\end{equation} 
where $\bar{X}$ is the corresponding generic variety. If $D=0$ and $d=8$, then by the same argument as above the classes of $d_{i}y_{j}y_{k}y_{l}$ generate different subgroups of $\Gamma^{2/3}(X)_{\tors}$ of order $2$ and the class of $8y_{1}y_{2}y_{3}y_{4}=2y_{1}(4y_{2}y_{3}y_{4})$ generates a subgroup of $\Gamma^{3/4}(X)_{\tors}$ of order $2$. Therefore, it follows Corollary \ref{maximaltorsionconicdetermined} that
\begin{equation}\label{fourconicsthmcase4prime2}
\Gamma^{2/3}(X)_{\tors}=(\Z/2\Z)^{\oplus 4} \text{ and } \Gamma^{3/4}(X)_{\tors}=\Z/2\Z.
\end{equation}


\framebox{{\it Case}: $D_{2}=4$.} In codimension $2$, if $D\geq 4$, then by the assumption $D_{2}=4$ and Lemma \ref{fourconicslem} $(1)$ we have $2y_{i}y_{j}\in \Im(\res^{2/3})$ for all $i, j$, thus $(4^{6-D}\cdot 2^{D})\cdot \beta_{2}\leq 2^{6}$, i.e., $\beta_{2}\leq 1/2^{6-D}$. Let $z_{i}=y_{j}y_{k}+y_{j}y_{l}+y_{k}y_{l}$ for all $i$. If $D=0$ and $d=2$ (resp. $d=4$), then we replace the codimension $2$ part of the basis of $K(X_{E})$ by $$\{z_{i}, z_{j}, z_{k}, \sum_{1\leq p<q\leq 4} y_{p}y_{q}, y_{j}y_{k}, y_{j}y_{l}\} \text{ (resp. } \{z_{i}, z_{j}, z_{k}, y_{j}y_{k}, y_{j}y_{l}, y_{i}y_{l}\})$$ for any $i, j, k, l$. Hence, by Lemma \ref{fourconicslem} $(1)$ we obtain $\beta_{2}\leq d/2^{5}$. By the same argument, we easily see that the same inequality holds for $D=1, 2$. For the remaining case $D=3$, by the same trick and Lemma \ref{fourconicslem} $(1)$ we obtain 
\[2^{9}\cdot \beta_{2}\leq  4\cdot 2^{5}\text{ (resp. }\leq 2^{6})\,\,\, \text{ if } d=4, d_{ij}=d_{ik}=d_{jk}=2 \text{ (resp. otherwise})\] for some $i, j, k$. In codimension $3$, as $D_{2}=4$, it follows from Corollary \ref{fourconicslem2}  that $2^{4}\cdot \beta_{3}\leq 4^{4}$.

In codimension $4$, by Lemma \ref{fourconicslem} $(3)$, we have 
\begin{equation}\label{case5beta_4}\beta_{4}\leq 4/d\, \text{ (resp. }\leq 8/d)\,\,\, \text{ if } d_{ij}=d_{kl}=2 \text{ for some }i, j, k, l \text{ (resp. otherwise)}.\end{equation}
Observe that the same upper bound (\ref{case5beta_4}) of $\beta_{4}$ also works for the remaining three cases: $D_{2}=3, D_{4}=1;\, D_{2}=2, D_{4}=2;\, D_{2}=1, D_{4}=3$.

Applying the same argument together with the upper bounds $\beta_{i}$, we have $\Gamma^{2/3}(X)_{\tors}\neq 0$ in any case. In particular, if $D=6$ and $d=2$, then by Proposition \ref{lowerboundofconics} we have
\begin{equation}\label{fourconicsthmcase5prime}
\Gamma^{2/3}(X)_{\tors}=(\Z/2\Z)^{\oplus 5}, \text{ thus } \CH^{2}(\bar{X})_{\tors}=(\Z/2\Z)^{\oplus 5}.
\end{equation}


\framebox{{\it Case}: $D_{2}=3$, $D_{4}=1$.} Observe that in the previous case we only use at most three $z_{i}$'s for the new basis in codimension $2$. Thus, one easily see that $D_{2}=3$ does not affect on the upper bounds of $\beta_{2}$ in the previous case, i.e., we have the same upper bound for $\beta_{2}$ as in the previous case. In codimension $3$, as $D_{2}=3$ and $D_{4}=1$, it follows from Corollary \ref{fourconicslem2}  that $(2^{3}\cdot 4) \beta_{3}\leq 4^{4}$. 

Using (\ref{case5beta_4}) and the same argument as in the previous case, we see that $|\!\oplus\Gamma^{i/i+1}(X)_{\tors}|$ is trivial in the following cases: for some distinct $i, j, k, l\in I_{4}$
\begin{equation}\label{fourconicsthmcase6}
D\leq 3,\, d_{ij}=d_{kl}=2.
\end{equation}



\framebox{{\it Case}: $D_{2}=1$, $D_{4}=3$.} Let $d_{l}=2$ and $d_{i}=d_{j}=d_{k}=4$. We first find upper bounds for $\beta_{2}$ based on $D$. Obviously, we have $\beta_{2}\leq 1$ if $D=6$. If $D=5$, then by Lemma \ref{fourconicslem} $(1)$
\[(4\cdot 2^{5})\beta_{2}\leq 2^{6}\text{ (resp. }\leq 4\cdot 2^{5})\,\,\, \text{ if }d=2 \text{ or } d_{ij}+d_{ik}+d_{jk}=8 \text{ (resp. otherwise). }\] 
If $D=0,1,2$, then according to Lemma \ref{fourconicslem} $(1)$ the upper bound depends on $d$ as follows (we always use $y_{ij}+y_{ik}+y_{jk}$ as a part of basis of $K(X_{E})$. When $d=2$, we also use $\sum_{1\leq p<q\leq 4}y_{p}y_{q}$ as a part of basis of $K(X_{E})$): 
\[(4^{6-D}\cdot 2^{D})\beta_{2}\leq 2^{D}\cdot 2\cdot 4^{5-D} \text{ (resp. }\leq 2^{D}\cdot 2\cdot (2\cdot 4^{4-D}))\,\,\, \text{ if }d=2 \text{ (resp. otherwise). }\] 
For the remaining case $D=3, 4$, by the same argument we obtain
\begin{equation*}
	\beta_{2}\leq
	\begin{cases}
		1/2^{2} & \text{ if } d_{ij}=4, d_{ik}=d_{jk}=d=2; \text{ or } d_{ij}=d_{ik}=4, d_{jk}=d=2, D=3,\\
		1/2 & \text{ otherwise,}\\
		1 & \text{ if } d_{ij}=d_{ik}=d_{jk}=2, d=4.
	\end{cases}
\end{equation*}

In codimension $3$, it follows from Lemma \ref{fourconicslem} $(2)$ that
\begin{equation*}
(4^{3}\cdot 2) \beta_{3}\leq 8^{2}\cdot 4^{2}\text{ (resp. }\leq 8\cdot 4^{3})\,\,\, \text{ if } D=0, d=4 \text{ (resp. }D=1, d_{ij}=2, d=4).
\end{equation*}
Otherwise, by Corollary \ref{fourconicslem2}  we obtain $(4^{3}\cdot 2)\beta_{3}\leq 4^{4}$.


Combining all bounds of $\beta_{i}$ together with (\ref{case5beta_4}), we see that $|\!\oplus\Gamma^{i/i+1}(X)_{\tors}|$ is trivial in the following cases: for some distinct $i, j, k, l\in I_{4}$
\begin{align}\label{fourconicsthmcase8}
&D=5, d_{l}=2, d_{ij}+d_{ik}+d_{jk}\neq 6, d=4; \text{ or } D=4, d_{l}=2, d_{ij}=4, d_{ik}=2, d_{jk}=2; \\
& \text{or }D=4, d_{l}=d_{jk}=2, d_{ij}=d_{ik}=d=4; \text{ or }D\leq 3, d_{ij}=d_{kl}=2 .\nonumber
\end{align}


\framebox{{\it Case}: $D_{2}=2$, $D_{4}=2$.} Let $d_{k}=d_{l}=2$ and $d_{i}=d_{j}=4$. The upper bounds for $\beta_{2}$ is slightly different from the previous bounds: the only difference comes from possible use of $y_{ij}+y_{il}+y_{jl}$ as a part of basis of $K(X_{E})$. If $D=6$, then $\beta_{2}\leq 1$. If $D=5$, then it follows by Lemma \ref{fourconicslem} $(1)$ that 
\[(4\cdot 2^5) \beta_{2}\leq 4\cdot 2^{5}\text{ (resp. }\leq 2^{6})\,\,\, \text{ if }d=d_{kl}=4\text{ (resp. otherwise). }\] 
For the remaining case $D=3, 4$, we always use $y_{ij}+y_{ik}+y_{jk}$ and $y_{ij}+y_{il}+y_{jl}$ as a part of basis of $K(X_{E})$ (If in addition $d=2$, we also use $\sum_{1\leq p<q\leq 4}y_{p}y_{q}$ as a part of the basis as before). When $D=4$, by Lemma \ref{fourconicslem} $(2)$ we have
\[\beta_{2}\leq 1/2^{2}\,\,\, \text{ if }d=2; \text{ or }d_{ij}=4, d_{kl}=2; \text{ or } d_{ij}=d_{kl}=2, d_{ik}+d_{jk}=6.\] 
Otherwise, $\beta_{2}\leq 1/2$. Similarly, if $D=3$, then again by Lemma \ref{fourconicslem} $(2)$
\begin{equation*}
	\beta_{2}\leq
	\begin{cases}
		1/2^{3}& \text{ if } d=2, d_{ij}=d_{kl}=4; \text{ or } d=2, d_{ij}+d_{kl}=d_{ik}+d_{jk}=6,\\
		1/2^{2} & \text{ otherwise,}\\
		1/2 & \text{ if } d=d_{kl}=4, d_{ij}=2, d_{ik}+d_{jk}\neq 6.
	\end{cases}
\end{equation*}

In codimension $3$, It follows from Lemma \ref{fourconicslem} $(2)$ and Corollary \ref{fourconicslem2} that 
\begin{equation*}(4^2\cdot 2^2) \beta_{3}\leq 8\cdot 4^{3}\, \text{ (resp. }\leq 4^{4}) \text{ if } D=0, d=4; \text{ or } D=1, d_{ij}=2, d=4 \text{ (resp. otherwise)}.\end{equation*}

By (\ref{case5beta_4}) and the same argument as in the previous case, the torsion $\!\oplus\Gamma^{i/i+1}(X)_{\tors}$ is trivial in the following cases: for some distinct $i, j, k, l\in I_{4}$
\begin{align}\label{fourconicsthmcase7}
	&D=2, d_{ij}=d_{kl}=2, d=4; \text{ or } D=d=4, d_{k}+d_{l}=d_{ij}+d_{kl}=4, d_{ik}+d_{jk}=6; \text{ or}\\\nonumber
	& D=3, d_{k}+d_{l}=4, d_{ij}+d_{kl}=8; \text{ or }D=3, d_{k}+d_{l}=d_{ij}+d_{kl}=d=4; \text{ or } \\\nonumber
	& D=3, d_{k}+d_{l}=4, d_{ij}+d_{kl}=6, d_{ik}+d_{jl}\neq 6.\nonumber
\end{align}

Finally, the second statement of the theorem comes from (\ref{secondstatementone}), (\ref{secondstatementtwo}), (\ref{secondstatementthree}), (\ref{fourconicsthmcase4prime}), and (\ref{fourconicsthmcase4prime2}).\end{proof}

\begin{remark}\label{fourconicsremarkuse}
Indeed, as we have shown in the proof of the first case, one can show that the upper bounds $\beta_{1}\beta_{2}\beta_{3}\beta_{4}$ for each case of the proof of Theorem \ref{fourconicsthm} are sharp.
\end{remark}

\subsection{Galois cohomology and torsion groups}\label{applicationgalcoh}

The torsion subgroup of the Chow group of codimension $2$ cycles can be used to measure how far is a relative Galois cohomology group from being a decomposable subgroup (generated by the class of $A_{i}$ below) \cite[Theorem 4.1]{Pey}. Namely, for an $F$-variety $X=\prod_{i} \SB(A_{i})\in \C_{n}$ or $\SBS_{n}$ we have
\begin{equation}\label{applicationchowtwo}
\CH^{2}(X)_{\tors}\simeq H^{3}(F(X)/F, \Q/\Z(2))/\oplus_{i} H^{1}(F, \Q/\Z(1))\cup [A_{i}],
\end{equation}
where $H^{3}(F(X)/F, \Q/\Z(2))$ denotes the kernel of $H^{3}(F, \Q/\Z(2))\to H^{3}(F(X), \Q/\Z)$ of Galois cohomology groups with coefficient in $\Q/\Z(2)$ and $[A_{i}]$ denotes the class in the Brauer group $\Br(F)=H^{2}(F, \Q/\Z(1))$. Therefore, our main results (Corollary \ref{maximaltorsionconicdetermined}, Proposition \ref{lowerSB}, and Theorem \ref{threeproductSBThm}) tell us how large indecomposable subgroups we can have.

 Moreover, by \cite[Remark 4.1]{Pey} there is a canonical injection from a Galois cohomology group with the finite coefficient $\gmu_{n}^{\tens 2}$ to the torsion subgroup:
\begin{equation}\label{applicationapp}
H^{3}(F(X)/F, \gmu_{n}^{\tens 2})/\oplus_{i} H^{1}(F, \gmu_{n})\cup [A_{i}]\hookrightarrow \CH^{2}(X)_{\tors}.
\end{equation}
Therefore, if the torsion subgroup $\CH^{2}(X)_{\tors}$ is trivial, then one can write the relative Galois cohomology group in terms of decomposable subgroups with the finite coefficient $\gmu_{n}$. For instance, if $X\in C_{4}$ satisfying one of the conditions $(\ref{fourconicsthmcase1})$, $(\ref{fourconicsthmcase2})$, $(\ref{fourconicsthmcase3})$, $(\ref{fourconicsthmcase4})$, $(\ref{fourconicsthmcase6})$, $(\ref{fourconicsthmcase8})$, $(\ref{fourconicsthmcase7})$, $d=16$ in Theorem \ref{fourconicsthm}, then we obtain $|\CH^{2}(X)_{\tors}|=1$, thus by (\ref{applicationapp}) we have
\begin{equation*}
H^{3}(F(X)/F, \Z/2\Z)=\oplus_{i=1}^{4} H^{1}(F, \Z/2\Z)\cup [Q_{i}].
\end{equation*}

Finally, we mention another application of the results (Proposition \ref{threeconics} and Theorem \ref{fourconicsthm}) as follows. Consider a split reductive group $G$ over $F$ and its semisimple derived subgroup $G'$. Let $Y$ be a generic homogeneous variety $\bar{U}/B$ over $F(U/G')$, where $U$ is an open subset of a generically free representation of $G'$, $B$ is a Borel subgroup of $G'$, and $\bar{U}$ is the generic fiber of $U$ over $U/G'$. Then, the torsion part of $\CH^{2}(Y)$ determines the group of semi-decomposable invariants $\operatorname{Sdec}(G')$ and $\operatorname{Sdec}(G)$ of $G'$ and $G$ with coefficients in $\Q/\Z(2)$ \cite[Theorem]{MNZ}, \cite[Proposition 2.1]{Baek}:
\begin{equation*}
\CH^{2}(Y)_{\tors}\simeq \Inv^{3}(G')/\operatorname{Sdec}(G'), \,\, \operatorname{Ker}(\Inv^{3}(G)_{\operatorname{ind}}\to \CH^{2}(Y)_{\tors})\simeq \operatorname{Sdec}(G)/\operatorname{Dec}(G),
\end{equation*}
where $\Inv^{3}(G')$ is the group of degree $3$ normalized invariants of $G'$, $\Inv^{3}(G)_{\operatorname{ind}}$ is the group of indecomposable invariants of $G$, and $\operatorname{Dec}(G)$ is the subgroup of decomposable invariants of $G$.

In particular, when $G'$ is the quotient of $(\gSL_{2})^{n}$, $n\geq 4$, by its maximal central subgroup, the computation of $\CH^{2}(Y)_{\tors}$ is reduced to computing $\CH^{2}(\bar{X})_{\tors}$ and $\CH^{2}(\bar{X'})_{\tors}$, where $X=\prod_{i=1}^{3}\SB(Q_{i})$, $X'=\prod_{i=1}^{4}\SB(Q'_{i})$ associated to division quaternion algebras $Q_{i}, Q'_{i}$ such that $\ind(\prod_{i=1}^{4}Q'_{i})=\ind(\prod_{i=1}^{3}Q_{i})=2$, $\ind(Q'_{i}\tens Q'_{j}\tens Q'_{k})=\ind(Q'_{i}\tens Q'_{j})=\ind(Q_{p}\tens Q_{q})=4$ for all distinct $i, j, k\in I_{4}$, $p, q\in I_{3}$ \cite[Lemma 5.1, Proposition 5.2]{Baek}. These results are particular cases of Proposition \ref{threeconics} and Theorem \ref{fourconicsthm}, respectively and serve as the main ingredient in the proof of \cite[Theorem]{Baek}, which completely determines degree $3$ invariants of $G'$ and the corresponding reductive group $G$.

\section{Product of Severi-Brauer surfaces}\label{section5}

In this section, we find a general lower bound of the torsion in Chow group of codimension $2$ of the product of $n$ Severi-Brauer surfaces in Proposition \ref{lowerSB} and prove that the lower bound is sharp for $n=3$ in Theorem \ref{threeproductSBThm}. To prove Proposition \ref{lowerSB} we need the following:
\begin{lemma}\label{KeySB}
Let $p$ be an odd prime and let $n\geq 2$ and $m_{i}$ be integers such that $1\leq m_{i}\leq p-1$ for all $1\leq i\leq n$. Let $\Phi$ be the polynomial $(\prod_{i=1}^{n}(s_{i}+1)^{m_{i}}-1)^{p}$ in $\Z[s_{1}, \ldots, s_{n}]$. Then, the alternating sum $\sum_{j_{1}=\cdots=j_{n}=1}^{p-1}(-1)^{j_{1}+\cdots +j_{n}}C_{j_{1}\cdots j_{n}}$ in the quotient $\Z[s_{1},\ldots, s_{n}]/(s_{1}^{p},\ldots, s_{n}^{p})$ is divisible by $p^{2}$, where $C_{j_{1}\cdots j_{n}}$ is the coefficient of the monomial $s_{1}^{j_{1}}\cdots s_{n}^{j_{n}}$ in $\Phi$.
\end{lemma}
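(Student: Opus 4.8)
The plan is to reduce the statement to a one--variable computation by exploiting the product structure, and then to upgrade a mod-$p$ divisibility to a mod-$p^2$ divisibility through a second--order expansion of the binomial coefficients $\binom{p}{a}$.

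First I would observe that passing to the quotient $\Z[s_1,\ldots,s_n]/(s_1^p,\ldots,s_n^p)$ only discards those monomials in which some exponent is at least $p$; hence for $1\le j_k\le p-1$ the number $C_{j_1\cdots j_n}$ is literally the coefficient of $s_1^{j_1}\cdots s_n^{j_n}$ in the polynomial $\Phi$ itself. Introduce the linear functional $L$ sending a polynomial to its alternating coefficient sum over the box $1\le j_k\le p-1$; since this box is a product of intervals, $L$ factors as a product of one--variable functionals. Writing $P=\prod_i(s_i+1)^{m_i}$ and expanding $\Phi=(P-1)^p=\sum_{a=0}^p\binom{p}{a}(-1)^{p-a}P^a$, the factorization $P^a=\prod_i(s_i+1)^{am_i}$ makes the coefficient of $s_1^{j_1}\cdots s_n^{j_n}$ in $P^a$ equal to $\prod_i\binom{am_i}{j_i}$, so $L$ splits over $i$ and I obtain
\[S:=L(\Phi)=\sum_{a=1}^{p}\binom{p}{a}(-1)^{p-a}\prod_{i=1}^{n}g(am_i),\qquad g(M):=\sum_{j=1}^{p-1}(-1)^j\binom{M}{j}.\]

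Next I would evaluate $g$ in closed form: the partial alternating binomial identity $\sum_{j=0}^{k}(-1)^j\binom{M}{j}=(-1)^k\binom{M-1}{k}$ with $k=p-1$ gives $g(M)=\binom{M-1}{p-1}-1$. By Lucas' theorem $\binom{M-1}{p-1}\equiv 1\pmod p$ precisely when $M\equiv 0\pmod p$ and $\equiv 0$ otherwise; since $1\le m_i\le p-1$ this yields $g(am_i)\equiv -1\pmod p$ for $1\le a\le p-1$ and $g(pm_i)\equiv 0\pmod p$. Thus the lone term $a=p$ in $S$ equals $\prod_i g(pm_i)$, a product of $n$ factors each divisible by $p$, so it is divisible by $p^n$; here the hypothesis $n\ge 2$ is exactly what forces this term to vanish modulo $p^2$.

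It then remains to prove $\sum_{a=1}^{p-1}\binom{p}{a}(-1)^{p-a}\prod_i g(am_i)\equiv 0\pmod{p^2}$, which is the crux. Writing $\binom{p}{a}=p\,w_a$ with $w_a=\tfrac1a\binom{p-1}{a-1}\equiv(-1)^{a-1}a^{-1}\pmod p$, and using $\prod_i g(am_i)\equiv(-1)^n\pmod p$, each summand is congruent modulo $p^2$ to $p(-1)^n a^{-1}$ after collecting the signs $(-1)^{p-a}=-(-1)^a$. Hence the whole sum is $\equiv p(-1)^n\sum_{a=1}^{p-1}a^{-1}\pmod{p^2}$, and since $a\mapsto a^{-1}$ permutes $\{1,\ldots,p-1\}$ one has $\sum_{a=1}^{p-1}a^{-1}\equiv\sum_{a=1}^{p-1}a=\tfrac{p(p-1)}{2}\equiv 0\pmod p$ because $p$ is odd, giving $S\equiv 0\pmod{p^2}$. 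I expect the main obstacle to be exactly this final step: the individual coefficients are only divisible by $p$, so the extra factor of $p$ must be manufactured from the second--order congruence for $\binom{p}{a}$ together with the vanishing modulo $p$ of the inverse sum $\sum a^{-1}$, and keeping the sign bookkeeping (the parities $(-1)^{p-a}$, $(-1)^{a-1}$, and $(-1)^n$) straight is where the care is needed.
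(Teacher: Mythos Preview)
Your proof is correct and takes a genuinely different route from the paper's. The paper argues by induction on $n$: writing $t_n=(s_n+1)^{m_n}$, it expresses the coefficients of $\Phi$ in terms of auxiliary quantities $d_{j_1\cdots j_{n-1}k}$ and the coefficients $e_{j_1\cdots j_{n-1}}$ of $\prod_{i<n}(s_i+1)^{pm_i}$, shows each $e_{j_1\cdots j_{n-1}}\equiv\prod_i\binom{p}{j_i}m_i\pmod{p^2}$, and then combines the divisibilities $p\mid\binom{m_nk-1}{p-1}$ for $k<p$ and $p\mid d_{j_1\cdots j_{n-1}k}$ with the identity $\sum_{j=1}^{p-1}(-1)^j\binom{p}{j}=0$ to close the induction. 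You instead exploit the full product structure at once: the alternating-sum functional factors over the variables, collapsing $S$ to the single sum $\sum_{a}\binom{p}{a}(-1)^{p-a}\prod_i g(am_i)$; the closed form $g(M)=\binom{M-1}{p-1}-1$ via the telescoping identity, Lucas' theorem for the $a=p$ term, and the second-order congruence $\binom{p}{a}/p\equiv(-1)^{a-1}a^{-1}\pmod p$ together with $\sum_{a=1}^{p-1}a^{-1}\equiv0\pmod p$ then finish immediately. Your argument is shorter and conceptually cleaner, and it makes visible exactly where the hypotheses $n\ge2$ and $p$ odd enter (the former for the $a=p$ term, the latter for $\tfrac{p(p-1)}{2}\equiv0$); the paper's inductive method achieves the same end but with more bookkeeping.
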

\begin{proof}
Let $t_{n}=(s_{n}+1)^{m_{n}}$. For any $1\leq j_{1}, \ldots, j_{n-1}\leq p-1$, we write $\sum_{k=1}^{p} d_{j_{1}\cdots j_{n-1}k}\cdot t_{n}^{k}$ for the coefficient of $s_{1}^{j_{1}}\cdots s_{n-1}^{j_{n-1}}$ in $\Phi$. Let $e_{j_{1}\cdots j_{n-1}}$ be the coefficient of $s_{1}^{j_{1}}\cdots s_{n-1}^{j_{n-1}}$ in $\Psi_{n}:=(s_{1}+1)^{pm_{1}}\cdots (s_{n-1}+1)^{pm_{n-1}}$. By expanding each factor $(s_{i}+1)^{pm_{i}}=((s_{i}+1)^{p})^{m_{i}}$ of $\Psi_{n}$, we see that
\begin{equation}\label{ejdivisible}
e_{j_{1}\cdots j_{n-1}}=\big({{p}\choose{j_{1}}}m_{1}+a_{1}\big)\cdots \big({{p}\choose{j_{n-1}}}m_{n-1}+a_{n-1}\big)
\end{equation}
for some integers $a_{1}, \ldots, a_{n-1}$, such that $p^{2}\mid a_{i}$ for all $1\leq i\leq n-1$. 

We prove by induction on $n$. Assume $n=2$. First, observe that $C_{j_{1}0}=0$ in the quotient $\Z[s_{1}, s_{2}]/(s_{1}^{p}, s_{2}^{p})$. Hence, we have
\[\sum_{j_{2}=1}^{p-1}(-1)^{j_{2}}C_{j_{1}j_{2}}\!=\!\!\sum_{j_{2}=0}^{p-1}(-1)^{j_{2}}C_{j_{1}j_{2}}\!=\!\!\sum_{k=1}^{p}d_{j_{1}k}\!\sum_{i=0}^{p-1}(-1)^{i}{{m_{2}k}\choose{i}}\!=\!\!\sum_{k=1}^{p}d_{j_{1}k}{{m_{2}k-1}\choose{p-1}},\]
which implies that
\begin{equation}\label{inductwo}
\sum_{j_{1}=j_{2}=1}^{p-1}(-1)^{j_{1}+j_{2}}C_{j_{1}j_{2}}=\sum_{j_{1}=1}^{p-1}\sum_{k=1}^{p}d_{j_{1}k}{{m_{2}k-1}\choose{p-1}}(-1)^{j_{1}}.
\end{equation}
For each $1\leq k\leq p-1$, we see that
\begin{equation}\label{indtworesult1}
p\mid {{m_{2}k-1}\choose{p-1}}.
\end{equation}
Moreover, as $\Phi=\sum_{i=0}^{p}{{p}\choose{i}}(s_{1}+1)^{m_{1}i}t_{2}^{i}(-1)^{p-i}$, we obtain
\begin{equation}\label{indtworesult2}
p\mid d_{j_{1}k}
\end{equation}
for any $1\leq k\leq p$. From (\ref{indtworesult1}) and (\ref{indtworesult2}), it suffices to show that 
\begin{equation}\label{divisibilityofntwo}
p^{2}\mid \sum_{j_{1}=1}^{p-1} d_{j_{1}p}{{m_{2}p-1}\choose{p-1}}(-1)^{j_{1}}.
\end{equation}
Since $d_{j_{1}p}=e_{j_{1}}$ and $\sum_{j_{1}=1}^{p-1}{{p}\choose{j_{1}}}(-1)^{j_{1}}=0$, the divisibility in (\ref{divisibilityofntwo}) follows from (\ref{ejdivisible}).

Now we assume that the result holds for $n-1$. By the induction hypothesis, it is enough to show that $\sum_{j_{1}=\cdots=j_{n-1}=1, j_{n}=0}^{p-1}(-1)^{j_{1}+\cdots+j_{n}}C_{j_{1}\cdots j_{n}}$ is divisible by $p^{2}$. Applying the same argument as in the case $n=2$, we have
\begin{equation}\label{inducn}
\sum_{j_{1}=\cdots=j_{n-1}=1, j_{n}=0}^{p-1}\!\!\!\!\!\!\!\!(-1)^{j_{1}+\cdots+j_{n}}C_{j_{1}\cdots j_{n}}=\!\!\!\!\!\!\!\sum_{j_{1}=\cdots=j_{n-1}=1}^{p-1}\sum_{k=1}^{p}d_{j_{1}\cdots j_{n-1}k}{{m_{n}k-1}\choose{p-1}}\!(-1)^{j_{1}+\cdots+j_{n-1}}.
\end{equation}
As before, we know $p\mid {{m_{n}k-1}\choose{p-1}}$ for each $1\leq k\leq p-1$. By the same argument with $t_{n}$, we also know $p\mid d_{j_{1}\cdots j_{n-1}k}$ for each $1\leq k\leq p$. For $k=p$, we have $p^{2}\mid d_{j_{1}\cdots j_{n-1}p}$ by (\ref{ejdivisible}). Therefore, the result immediately follows.\end{proof}

We provide lower bounds of the torsion in Chow group of codimension $2$ of the product of $n$ Severi-Brauer surfaces, which generalize the case of $n=2$ in \cite[Proposition 6.3]{IzhKar}.

\begin{proposition}\label{lowerSB}
	Let $n\geq 2$ be an integer and let $A_{1}, \ldots, A_{n} $ be central simple algebras such that $\ind(A_{1}^{\tens j_{1}}\tens \cdots \tens A_{n}^{\tens j_{n}})=3$ for any integers $0\leq j_{1},\ldots, j_{n}\leq 2$, not all equal to $0$. Then, the torsion subgroup $\CH^{2}(\bar{X})_{\tors}$ of the corresponding generic variety of $X=\prod_{i=1}^{n}\SB(A_{i})$ contains 
	\[(\Z/3\Z)^{\oplus N}, \text{ where } N=2^{n}+4{{n}\choose{3}}-(n+1).\]
	In particular, $\mathcal{M}(\SBS_{n})\geq 3^{N}$.
\end{proposition}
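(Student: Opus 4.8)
The plan is to imitate the structure of the conic case (Proposition~\ref{lowerboundofconics}), replacing the prime $2$ by the odd prime $p=3$ and using the combinatorial divisibility of Lemma~\ref{KeySB} to control the behaviour of Chern classes in high codimension. Let $X=\prod_{i=1}^{n}\SB(A_{i})$ with all the prescribed indices equal to $3$. By (\ref{Quillenbasis}) a basis of $K(X)$ is given by the monomials $3x_{1}^{j_{1}}\cdots x_{n}^{j_{n}}$ (for $(j_{1},\ldots,j_{n})\neq 0$, with $0\le j_{k}\le 2$) together with $1$; after the substitution $y_{i}=x_{i}-1$ we obtain a second basis whose nonconstant generators are all divisible by $3$. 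From this description I would compute $|K(X_{E})/K(X)|$ and then, exactly as in (\ref{alphalem}), read off the product formula $|\!\oplus\Gamma^{i/i+1}(X)_{\tors}|\cdot|K(X_{E})/K(X)|=\prod_{i}|\Gamma^{i/i+1}(X_{E})/\Im(\res^{i/i+1})|$.

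The heart of the argument is a two-sided estimate. First I would exhibit an explicit lower bound $(\Z/3\Z)^{\oplus N}\subseteq\Gamma^{2/3}(X)_{\tors}$ by producing $N$ independent torsion classes in codimension $2$. The natural candidates are the classes of $3\,y_{i_{1}}y_{i_{2}}y_{i_{3}}$ (for each triple, contributing $4\binom{n}{3}$ once one accounts for the four nontrivial twists $A_{i_{1}}^{\otimes j_{1}}\otimes A_{i_{2}}^{\otimes j_{2}}\otimes A_{i_{3}}^{\otimes j_{3}}$ that survive modulo the relation $\sum j_{k}\equiv 0$), together with the classes $3\,y_{i_{1}}\cdots y_{i_{k}}$ for $k\geq 4$ (contributing $\sum_{k\ge 4}\binom{n}{k}=2^{n}-\binom{n}{3}-\binom{n}{2}-n-1$). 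To see these are genuine torsion of order exactly $3$ and mutually independent, I would show $3\,y_{i_{1}}\cdots y_{i_{k}}\in\Gamma^{2}(X)$ while every element of $\Gamma^{3}(X)$ is divisible by a higher power of $3$; the membership in $\Gamma^{2}(X)$ is forced by computing the second Chern class $c_{2}$ of the appropriate basis element, and here Lemma~\ref{KeySB} supplies the crucial divisibility by $p^{2}=9$ of the relevant alternating sum of coefficients of $\Phi=(\prod(x_{i})^{m_{i}}-1)^{p}$, which is what pins down that these classes really descend to order-$3$ elements rather than collapsing.

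To match this lower bound with an upper bound and conclude equality (so that the lower bound $3^{N}$ is attained on the generic variety), I would bound each $\beta_{i}=|\Gamma^{i/i+1}(X_{E})/\Im(\res^{i/i+1})|/|K^{i}(X_{E})/K^{i}(X)|$ from above. For $i=1,2$ one has $\beta_{1},\beta_{2}\le 1$; for $i\ge 3$ the divisibility of a general element of $\Gamma^{i}(X)$ by $3^{\lceil i/2\rceil}$ (again an odd-prime analogue of (\ref{conicpf1})--(\ref{conicpf2})) gives a bound of the shape $\beta_{i}\le 3^{(\lceil i/2\rceil-1)\binom{n}{i}}$ away from the lowest codimensions, with a careful separate treatment of the codimension-$3$ contribution where the factor $4\binom{n}{3}$ enters. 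Feeding these into (\ref{alphalem}) and comparing with the exhibited lower bound forces $\Gamma^{2/3}(X)_{\tors}=(\Z/3\Z)^{\oplus N}$, and (\ref{generic}) transports this to $\CH^{2}(\bar{X})_{\tors}$.

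\textbf{Main obstacle.} I expect the delicate point to be the codimension-$3$ bookkeeping: unlike the Pfister case, where each triple of indices contributes a single class, here each triple carries four inequivalent Brauer classes of $3$-torsion (the nontrivial twists $A_{i}^{\otimes j}$), and one must verify both that all $4\binom{n}{3}$ of them are independent in $\Gamma^{2/3}(X)$ and that the upper bound on $\beta_{3}$ is tight enough not to absorb any of them. Establishing that independence, and in particular checking that the $c_{2}$-computation combined with Lemma~\ref{KeySB} produces precisely these classes (and not relations among them), is where the real work lies; the higher-codimension estimates are then a routine extension of the argument in Proposition~\ref{lowerboundofconics}.
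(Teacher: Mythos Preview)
Your overall strategy---exhibit explicit classes in $\Gamma^{2}(X)\setminus\Gamma^{3}(X)$ that are $3$-torsion, and invoke Lemma~\ref{KeySB} to certify non-membership in $\Gamma^{3}$---matches the paper's. But your list of candidate classes is wrong, and your count does not reach $N$.

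You propose the linear products $3\,y_{i_{1}}\cdots y_{i_{k}}$ for $k\ge 3$, with ``four twists'' per triple. Adding these up gives
\[
4\binom{n}{3}+\sum_{k\ge 4}\binom{n}{k}=2^{n}+3\binom{n}{3}-\binom{n}{2}-(n+1),
\]
which is short of $N=2^{n}+4\binom{n}{3}-(n+1)$ by $\binom{n}{2}+\binom{n}{3}$. The missing classes are exactly those involving \emph{squares} of the $y_{i}$: for a Severi-Brauer surface one has $y_{i}^{3}=0$ but $y_{i}^{2}\neq 0$, and that extra room is where the remaining torsion lives. The paper's list is
\[
\{3y_{p}^{2}y_{q}^{2}\},\quad \{3y_{p}y_{q}y_{r}\},\quad \{3y_{p}^{2}y_{q}y_{r}\},\quad \{3(y_{i_{1}}\cdots y_{i_{s}})^{2}\}_{s\ge 3},
\]
of sizes $\binom{n}{2}$, $\binom{n}{3}$, $3\binom{n}{3}$, $\sum_{s\ge 3}\binom{n}{s}$, which sum to $N$. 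Your ``four twists'' per triple presumably correspond to $3y_{p}y_{q}y_{r}$ together with the three $3y_{p}^{2}y_{q}y_{r}$, but you have nothing playing the role of $3y_{p}^{2}y_{q}^{2}$, and for $k\ge 4$ you take linear products rather than the squared ones $3(y_{i_{1}}\cdots y_{i_{k}})^{2}$. The direct analogy with Proposition~\ref{lowerboundofconics} breaks down precisely because the conic relation $y_{i}^{2}=0$ no longer holds.

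Two secondary points. First, you have the logic of Lemma~\ref{KeySB} slightly inverted: the lemma says every generator of $\Gamma^{3}(X)$, under the substitution $y_{i}\mapsto -1$, evaluates to a multiple of $9$; since each candidate class evaluates to $\pm 3$, it cannot lie in $\Gamma^{3}$. The lemma is an obstruction, not a construction. For independence, the paper uses this with varying substitutions (some $y_{i}=-1$, the rest $=0$) to isolate subsets of classes, and supplements it with explicit $c_{3}$ coefficient computations to separate $3y_{p}y_{q}y_{r}$ from the $3y_{p}^{2}y_{q}y_{r}$ within a fixed triple. Second, the proposition only asserts a containment $(\Z/3\Z)^{\oplus N}\subseteq\CH^{2}(\bar{X})_{\tors}$; your proposed upper-bound computation via the $\beta_{i}$ is unnecessary here (and the paper does not attempt it for general $n$---that matching is carried out only for $n=3$ in Theorem~\ref{threeproductSBThm}).
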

\begin{proof}
Let $A_{1}, \ldots, A_{n}$ be division algebras satisfying  $\ind(A_{1}^{\tens j_{1}}\tens \cdots \tens A_{n}^{\tens j_{n}})=3$ for any integers $0\leq j_{1},\ldots, j_{n}\leq 2$ and $X=\prod_{i=1}^{n}\SB(A_{i})$. Then, by (\ref{Quillenbasis}) we have a basis $\{1, 3x_{i_{1}}^{j_{1}}\cdots x_{i_{m}}^{j_{m}} \,| \newline 1\leq m\leq n, 1\leq i_{1}<\cdots< i_{m}\leq n, 0\leq j_{1},\ldots, j_{n}\leq 2 \}$ of $K(X)$, where for each $i\in I_{n}$, $x_{i}$ is the pullback of the class of the tautological line bundle on the projective plane.

Consider another basis $\{1, 3y_{i_{1}}^{j_{1}}\cdots y_{i_{m}}^{j_{m}}\}$ of $K(X)$, where $y_{i}=x_{i}-1$ for all $i\in I_{n}$. Let $B_{1}=\{b_{pq}:=3y_{p}^{2}y_{q}^{2}\}$, $B_{2}=\{b_{pqr}:=3y_{p}y_{q}y_{r}\}$, $B_{3}=\{b'_{pqr}:=3y_{p}^{2}y_{q}y_{r}\}$, where the indices $p, q, r$ range over all distinct integers in $I_{n}$. For each $3\leq m\leq n$, we set $D_{m}=\{d_{i_{1}\cdots i_{m}}:=3(y_{i_{1}}\cdots y_{i_{m}})^{2}\,|\, 1\leq i_{1}<\cdots< i_{m}\leq n\}$. Then, we get $|B_{1}|={{n}\choose{2}}$, $|B_{2}|={{n}\choose{3}}$, $|B_{3}|=3{{n}\choose{3}}$, $|D_{m}|={{n}\choose{m}}$, and $y_{i}^{3}=0$ for all $i\in I_{n}$. Set
\[B:=B_{1}\cup B_{2}\cup B_{3}\cup (\bigcup_{m=3}^{n}D_{m})\,\,\text{ and }\, N:=|B|=2^{n}+4{{n}\choose{3}}-(n+1).\]

It follows from (\ref{gammatwo}) that any element of $B$ is contained in $\Gamma^{2}(X)$. Assume that $b_{pq}\in\Gamma^{3}(X)$ for some $p, q$. Then, since $\Gamma^{3}(X)$ is generated by $\Gamma^{1}(X)\cdot \Gamma^{2}(X)$, $c_{3}(3x_{i_{1}}^{j_{1}}\cdots x_{i_{m}}^{j_{m}})$ and any element in $\Gamma^{1}(X)\cdot \Gamma^{2}(X)$ is divisible by $9$, we obtain 
\[3y_{p}^{2}y_{q}^{2}\equiv \sum_{1\leq i_{1}<\cdots< i_{m}\leq n,\, 0\leq j_{1},\ldots, j_{n}\leq 2} ( (y_{i_{1}}+1)^{j_{1}}\cdots (y_{i_{m}}+1)^{j_{m}}-1)^{3}\cdot \psi \,\,\mod 9\] for some $\psi\in K(X)$ depending on $i_{1},\ldots, i_{m}, j_{1},\ldots, j_{m}$. Therefore, by Lemma \ref{KeySB} together with substitution $y_{i}=-1$ for all $i\in I_{n}$ we obtain $3\equiv 0 \mod 9$. Hence, $b_{pq}\not\in \Gamma^{3}(X)$. By the same argument with Lemma \ref{KeySB}, we see that any element of $B$ is not contained in $\Gamma^{3}(X)$. As $3b_{pq}=3y_{p}^{2}(3y_{q}^{2})\in \Gamma^{4}(X)$, $3b_{pqr}=3y_{p}y_{q}(3y_{r})\in \Gamma^{3}(X)$, $3b'_{par}=3y_{p}^{2}(3y_{q}y_{r})\in \Gamma^{3}(X)$, $3d_{i_{1}\cdots i_{m}}\!\!=3y_{i_{1}}^{2}(3y_{i_{2}}^{2}\cdots y_{i_{m}}^{2})\in \Gamma^{4}(X)$, any element of $B$ gives a torsion of $\Gamma^{2/3}(X)$ of order $3$.

Now we show that any two subgroups generated by two different elements of $B$ have trivial intersection. Let $b_{pq}$ and $b_{rs}$ be two different elements in $B_{1}$. If $b_{pq}\pm b_{rs}\in \Gamma^{3}(X)$, then by the same argument as above together with substitution $y_{p}=y_{q}=-1, y_{i}=0$ for all $i\in I_{n}\backslash \{p, q\}$ we obtain $3\equiv 0 \mod 9$, which is a contradiction. Hence, the subgroups generated by $b_{pq}$ and $b_{rs}$ have trivial intersection. Moreover, by the same argument the subgroup generated by $b_{pq}$ has trivial intersection with any subgroup generated by an element of $B\backslash B_{1}$.

Let $z$ be either $1$ or a product of $x_{1},\cdots, x_{n}$ which does not contain any of $x_{p}$, $x_{q}$, and $x_{r}$. Consider the sequence $\beta'_{pqr}$ consisting of the coefficient of $b'_{pqr}/3$ in \begin{align}\label{sequenceofc3}
&c_{3}(3x_{p}^{2}x_{q}x_{r}z), c_{3}(3x_{p}x_{q}^{2}x_{r}z), c_{3}(3x_{p}x_{q}x_{r}^{2}z), c_{3}(3x_{p}^{2}x_{q}^{2}x_{r}z), c_{3}(3x_{p}^{2}x_{q}x_{r}^{2}), \\
&c_{3}(3x_{p}x_{q}^{2}x_{r}^{2}z),  c_{3}(3x_{p}^{2}x_{q}^{2}x_{r}^{2}z), c_{3}(3x_{p}x_{q}x_{r}z), \text{ respectively}.\nonumber
\end{align}
Then, by a direct calculation, we have $\beta'_{pqr}=(66, 30, 30, 132, 132, 60, 264, 15)$. Hence, each element of $\beta'_{pqr}-\beta'_{qpr}$, $\beta'_{pqr}-\beta'_{rpq}$, and $\beta'_{qpr}-\beta'_{rpq}$ is divisible by $9$, i.e., 
\begin{equation}\label{divisiblebetaprime}
9\mid \beta'_{pqr}-\beta'_{qpr},\, \beta'_{pqr}-\beta'_{rpq},\, \beta'_{qpr}-\beta'_{rpq}.
\end{equation}
Consider another sequence $\beta_{pqr}$ consisting of the coefficient of $b_{pqr}/3$ in (\ref{sequenceofc3}). Then, we get $\beta_{pqr}=(12,12,12,24,24,24,48, 6)$. Therefore, we have
\begin{equation}\label{divisiblebeta}
9\mid \beta_{pqr}-\beta'_{pqr},\, \beta_{pqr}-\beta'_{qpr},\, \beta_{pqr}-\beta'_{rpq}.
\end{equation}

Let $b'_{pqr}$ and $b'_{stu}$ be two different elements of $B_{3}$. If $b'_{pqr}\pm b'_{stu}\in \Gamma^{3}(X)$, then by applying Lemma \ref{KeySB} with $y_{p}=y_{q}=y_{r}=-1, y_{i}=0$ for all $i\in I_{n}\backslash \{p, q, r\}$ we get $3\equiv 0 \mod 9$, which is impossible. Therefore, the subgroups generated by $b'_{pqr}$ and $b'_{stu}$ have trivial intersection. Assume that $b'_{pqr}\pm b_{stu}\in \Gamma^{3}(X)$. Then, $b'_{pqr}-b'_{qpr}\in \Gamma^{3}(X)$, i.e., $b'_{pqr}-b'_{qpr}$ is a linear combination (over $\mathbb{Z}$) of elements in (\ref{sequenceofc3}) and the third Chern classes of basis elements of $K(X)$ which does not contain $x_{p}x_{q}x_{r}$. By comparing coefficients of $y_{p}y_{q}y_{r}$ we see that this contradicts the divisibility in (\ref{divisiblebetaprime}). By the same argument together with (\ref{divisiblebetaprime}), we get $b'_{pqr}\pm d_{i_{1}\cdots i_{m}}\not\in \Gamma^{3}(X)$. Hence, the subgroup generated by $b'_{pqr}$ has trivial intersection with any subgroup generated by an element in $B\backslash (B_{1}\cup B_{3})$.

Let $b_{pqr}$ and $b_{stu}$ be two different elements of $B_{2}$. Suppose that $b_{pqr}\pm b_{stu}\in \Gamma^{3}(X)$. Then, by applying Lemma \ref{KeySB} with $y_{p}=y_{q}=y_{r}=-1, y_{i}=0$ for all $i\in I_{n}\backslash \{p, q, r\}$ we obtain $-3\equiv 0 \mod 9$, which is a contradiction. Therefore, the subgroups generated by $b_{pqr}$ and $b_{stu}$ have trivial intersection. If $b_{pqr}\pm d_{i_{1}\cdots i_{m}}\in \Gamma^{3}(X)$, then by the same argument together with (\ref{divisiblebeta}) we obtain a contradiction. Therefore, the subgroup generated by $b_{pqr}$ has trivial intersection with any subgroup generated by an element in $B\backslash (B_{1}\cup B_{2}\cup B_{3})$.

For $3\leq m, k\leq n$, let $d_{i_{1}\cdots i_{m}}$ and $d_{i'_{1}\cdots i'_{k}}$ be two different elements of $D_{m}$ and $D_{k}$, respectively. If $d_{i_{1}\cdots i_{m}}\pm d_{i'_{1}\cdots i'_{k}}\in \Gamma^{3}(X)$, then by applying Lemma \ref{KeySB} with substitution $y_{i_{1}}=\cdots=y_{i_{m}}=-1$ and $y_{i}=0$ for all $i\in I_{n}\backslash \{i_{1}, \ldots, i_{m}\}$ we again get $3\equiv 0 \mod 9$, which is a contradiction. It follows that the group $\Gamma^{2/3}(X)_{\tors}$ contains $(\Z/3\Z)^{\oplus N}$, so does $\CH^{2}(\bar{X})_{\tors}$. \end{proof}


\subsection{Three Severi-Brauer surfaces} Let $A_{1}, A_{2}, A_{3}$ be central simple algebras of degree $3$ over $F$ and $X=$$\SB(A_{1})\times \SB(A_{2})\times \SB(A_{3})$. If one of $A_{1}, A_{2}, A_{3}$ is split, then by (\ref{reductionSB}) the problem to compute torsion in Chow group of codimension $2$ is reduced to the case of product of two Severi-Brauer varieties, which was done in \cite[Theorem 5.1]{IzhKar}. Hence, we may assume that $\ind(A_{i})=3$ for all $i\in I_{3}$. Let $e_{i}=\ind(A_j\tens A_k)$, $h_{i}=\ind(A_{j}^{\tens 2}\tens A_{k})$, $d=\ind(A_{1}\tens A_{2}\tens A_{3})$, and $g_{i}=\ind(A_{i}^{\tens 2}\tens A_{j}\tens A_{k})$ for all distinct $i, j, k\in I_{3}$. For the same reason, we may assume that $e_{i}, h_{i}, d, g_{i}\geq 3$. By (\ref{Quillenbasis}), we have a basis
\begin{equation}\label{basisthree}
\{1,\,3x_i,\,3x_{i}^{2},\,e_{i}x_{j}x_{k},\,h_{i}x_{j}^{2}x_{k},\,dx_{1}x_{2}x_{3},\,e_{i}x_{j}^{2}x_{k}^{2},\,g_{i}x_{i}^{2}x_{j}x_{k},\,g_{i}x_{i}x_{j}^{2}x_{k}^{2},\,dx_{1}^{2}x_{2}^{2}x_{3}^{2}\}
\end{equation}
of $K(X)$ of $27$ elements, where $x_{i}$ is the pullback of the class of the tautological line bundle on the projective plane. As before, we set $y_{i}=x_{i}-1$ for all $i\in I_{3}$.

We will need the following lemmas to find upper bounds of the torsion.
  
  
\begin{lemma}\label{Lem3}
Let $i, j, k$ be distinct integers in $I_{3}$. Then, we obtain

\smallskip

\noindent$(1)$ $3y_{j}y_{k}\in \Gamma^{2}(X)$, $3y_{j}^{2}y_{k}+3y_{j}y_{k}^{2}\in \Im(\res^{3/4})$ if $e_i=3$, 

\smallskip

\noindent$(2)$ $3y_{j}^{2}y_{k}-3y_{j}y_{k}^{2}\in \Im(\res^{3/4})$ if $h_{i}=3$,

\smallskip

\noindent$(3)$ $3\sum y_{p}y_{q}\in \Im(\res^{2/3})$, $6y_{1}y_{2}y_{3}+3\sum y_{p}^{2}y_{q}\in \Im(\res^{3/4})$, $9y_{1}^{2}y_{2}^{2}y_{3}^{2}\in \Gamma^{6}(X)$ if $d=3$,

\smallskip

\noindent$(4)$ $3(y_{j}y_{k}\!-y_{i}y_{k}\!-y_{i}y_{j})\!\!\in\! \Im(\res^{2/3})$, $3(y_{i}y_{j}^{2}+y_{i}y_{k}^{2})\!+\!3\sum y_{p}^{2}y_{q}+\!12y_{1}y_{2}y_{3}\!\in\! \Im(\res^{3/4})$ if $g_{i}=3$,

\smallskip

\noindent where the sums range over all $1\leq p, q\leq 3$.

\end{lemma}
\begin{proof}
Observe that $y_{1}^{3}=y_{2}^{3}=y_{3}^{3}=0$ in $K(X_{E})$. If $e_{i}=3$, then $3y_{j}y_{k}\in K(X)\cap \Gamma^{2}(X_{E})=\Gamma^{2}(X)$. As $c_{3}(3x_{j}x_{k})=3y_{j}^{2}y_{k}+3y_{j}y_{k}^{2}+6y_{j}^{2}y_{k}^{2}$, the result $(1)$ follows.
	
If $h_{i}=3$, then it follows from (\ref{basisthree}) and Lemma \ref{Lem3} ($1$) that $9y_{j}^{2}y_{k}=3y_{j}^{2}(3y_{k})$, $9y_{j}y_{k}^{2}=3y_{j}(3y_{k}^{2})\in \Gamma^{3}(X)$. Therefore, the result $(2)$ follows from $c_{3}(3x_{j}^{2}x_{k})=12y_{j}^{2}y_{k}+27y_{j}^{2}y_{k}^{2}+6y_{j}y_{k}^{2}$.

If $d=3$, then the result ($3$) for codimension $2$ immediately follows from $3(y_{1}+1)(y_{2}+1)(y_{3}+1)\in \Gamma^{2}(X)$. As $27(y_{1}y_{2}y_{3})^{2}\in \Gamma^{6}(X)$, the rest of them follow from the computations of $c_{3}(3x_{1}x_{2}x_{3})$ and $c_{6}(6x_{1}x_{2}x_{3})$.

If $g_{i}=3$, then by (\ref{basisthree}) we get $3x_{i}^{2}x_{j}x_{k}\in \Gamma^{2}(X)$, thus the result ($4$) for codimension $2$ follows from $3(y_{i}+1)^{2}(y_{j}+1)(y_{k}+1)\in \Gamma^{2}(X)$. Finally, the result for codimension $3$ immediately follows from $c_{3}(3x_{i}^{2}x_{j}x_{k})=12y_{i}^{2}(y_{j}+y_{k})+6y_{i}(y_{j}^{2}+y_{k}^{2})+3(y_{j}^{2}y_{k}+y_{j}y_{k}^{2})+12y_{1}y_{2}y_{3}$ modulo $\Gamma^{4}(X)$.\end{proof}

\begin{lemma}\label{Lem3prime}
Let $y_{i}=x_{i}-1$ for all $i\in I_{3}$. Then,

\smallskip

\noindent$(1)$ $3y_{1}^{2}y_{2}^{2}y_{3}-3y_{1}^{2}y_{2}y_{3}^{2}\in \Gamma^{3}(X)$ if $e_{i}=g_{i}=h_{i}=3$ for all $i\in I_{3}$.

\smallskip

\noindent$(2)$ $3y_{1}^{2}y_{2}^{2}y_{3}^{2}-6y_{1}^{2}y_{2}^{2}y_{3}\in \Gamma^{3}(X)$ if $e_{i}=g_{i}=h_{i}=d=3$ for all $i\in I_{3}$.
\end{lemma}
\begin{proof}
By direct calculation, one has
\begin{align*}
150(y_{1}^{2}y_{2}^{2}y_{3}\!-\!y_{1}^{2}y_{2}y_{3}^{2})\!=\!&-\!6(c_{3}(3x_{1}x_{2}^{2}x_{3})\!+c_{3}(3x_{1}x_{2}^{2}))\!+\!6(c_{3}(3x_{1}x_{2}x_{3}^{2})\!+c_{3}(3x_{1}x_{3}^{2}))\\
&\!+\!3(c_{3}(3x_{1}^{2}x_{2}^{2}x_{3})\!+c_{3}(3x_{2}^{2}x_{3}))\!-\!3(c_{3}(3x_{1}^{2}x_{2}x_{3}^{2})+c_{3}(3x_{2}x_{3}^{2}))\\
&-9c_{3}(3x_{1}^{2}x_{2})+9c_{3}(3x_{1}^{2}x_{3})+42c_{3}(3x_{1}x_{2})-42c_{3}(3x_{1}x_{3}).&
\end{align*}
Since $3y_{1}^{2}(3y_{2}^{2}y_{3})-3y_{1}^{2}(3y_{2}y_{3}^{2})\in \Gamma^{4}(X)$, the result $(1)$ follows. Similarly, we get \begin{align*}
150y_{1}^{2}y_{2}^{2}y_{3}^{2}\!-\!300y_{1}^{2}y_{2}^{2}y_{3}\!&=4(c_{3}(3x_{1}x_{2}x_{3}^{2})\!-\!c_{3}(3x_{1}^{2}x_{2}^{2}x_{3}))\!+\!6(c_{3}(3x_{1}^{2}x_{2})+c_{3}(3x_{1}x_{2}^{2}))\\
\!&\!+\!2(c_{3}(3x_{1}x_{3}^{2})+c_{3}(3x_{2}x_{3}^{2}))\!-\!2(c_{3}(3x_{1}^{2}x_{2}x_{3}^{2})+c_{3}(3x_{1}x_{2}^{2}x_{3}^{2}))\\
\!&\!+8(c_{3}(3x_{1}^{2}x_{2}x_{3})\!+c_{3}(3x_{1}x_{2}^{2}x_{3}))\!-\!8(c_{3}(3x_{1}x_{3})+c_{3}(3x_{2}x_{3}))\\
\!&\!+c_{3}(3x_{1}^{2}x_{2}^{2}x_{3}^{2})-16c_{3}(3x_{1}x_{2}x_{3})-36c_{3}(3x_{1}x_{2}).
\end{align*}
As $9y_{1}^{2}y_{2}^{2}y_{3}^{2}\in \Gamma^{4}(X)$ by Lemma \ref{Lem3} (3) and $3y_{1}^{2}(3y_{2}^{2}y_{3})\in \Gamma^{4}(X)$, the result $(2)$ follows.
\end{proof}

Applying Proposition \ref{lowerSB}, we prove the main result of this section. 


\begin{theorem}\label{threeproductSBThm}
The maximal torsion in Chow group of codimension $2$ of the product of three Severi-Brauer surfaces is $(\Z/3\Z)^{\oplus 8}$. In other words, $\mathcal{M}(\SBS_{3})=3^{8}$.
\end{theorem}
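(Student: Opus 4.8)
The plan is to prove $\mathcal{M}(\SBS_3)=3^8$ by establishing matching lower and upper bounds. The lower bound $\mathcal{M}(\SBS_3)\geq 3^8$ is already in hand: applying Proposition \ref{lowerSB} with $n=3$ gives $N=2^3+4\binom{3}{3}-(3+1)=8+4-4=8$, so the corresponding generic variety $\bar{X}$ satisfies $\CH^2(\bar{X})_{\tors}\supseteq(\Z/3\Z)^{\oplus 8}$. Thus the entire burden of the proof is the matching \emph{upper bound} $|\CH^2(X)_{\tors}|\leq 3^8$ for every $X=\SB(A_1)\times\SB(A_2)\times\SB(A_3)$ with each $A_m$ of index $3$, and then verifying that the maximal value $3^8$ is actually attained by the specific configuration of Proposition \ref{lowerSB} (where all partial indices equal $3$), so that it is realized as an honest Chow torsion group via the bijection (\ref{generic}).

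First I would reduce to the generic case and set up the bookkeeping exactly as in the conic arguments: since (\ref{generic}) gives $\Gamma^{2/3}(\bar{X})_{\tors}\overset\sim\to\CH^2(\bar{X})_{\tors}$ and the surjection $\Gamma^{2/3}(X)_{\tors}\twoheadrightarrow\CH^2(X)_{\tors}$ holds in general, it suffices to bound $|\oplus_i\Gamma^{i/i+1}(X)_{\tors}|$ from above using the master formula (\ref{alphalem}). Following the four-conics template, I would introduce $\beta_i=|\Gamma^{i/i+1}(X_E)/\Im(\res^{i/i+1})|/|K^i(X_E)/K^i(X)|$ for $1\leq i\leq 6$ (here $\dim X=6$), so that by (\ref{alphalem}) one has $|\oplus_i\Gamma^{i/i+1}(X)_{\tors}|=\prod_i\beta_i$. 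The values $|K^i(X_E)/K^i(X)|$ are read directly off the basis (\ref{basisthree}) in terms of the partial indices $e_i,f_i,d,g_i$. The task then becomes: for each codimension $i$, produce enough explicitly constructed elements of the image of restriction (coming from Chern-class computations and diagonal embeddings, packaged in Lemma \ref{Lem3}) to force $\beta_i$ small, and combine the resulting bounds.

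The main obstacle, exactly as in Theorem \ref{fourconicsthm}, is the case analysis on the partial indices. The indices $e_i,f_i,g_i,d$ range over $\{3,9,27\}$ subject to arithmetic constraints coming from the index relations among the $A_m^{\otimes j_m}$, and the achievable elements in $\Im(\res^{i/i+1})$ — hence the bounds on $\beta_i$ — depend sensitively on which of these indices collapse to $3$. I would organize the analysis by the number and position of indices equal to $3$ versus strictly larger, using parts (1)--(4) of Lemma \ref{Lem3} to produce image elements in codimensions $2$ and $3$ whenever a given $e_i$, $f_i$, $g_i$, or $d$ equals $3$, and parts (5)--(6) to handle the delicate codimension-$4$ relations when all indices are $3$. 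The hardest subcase is precisely the fully-split-after-extension configuration $e_i=f_i=g_i=d=3$ for all $i$, since there the denominators $|K^i(X_E)/K^i(X)|$ are smallest and one must extract the maximal possible image; there Lemma \ref{Lem3}(5),(6) are essential to show the products $\prod\beta_i$ cannot exceed $3^8$, matching the lower bound and pinning down $\Gamma^{2/3}(X)_{\tors}=(\Z/3\Z)^{\oplus 8}$ in that case. Assembling all subcases yields $|\CH^2(X)_{\tors}|\leq 3^8$ in general with equality attained, which completes the proof.
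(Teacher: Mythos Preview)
Your overall strategy matches the paper's: lower bound from Proposition \ref{lowerSB}, upper bound via the $\beta_i$ and Lemma \ref{Lem3}, with a case analysis on the partial indices (the paper organizes it by how many of the $e_i$ equal $3$). However, there is a genuine gap in how you propose to handle the extremal subcase $e_i=f_i=g_i=d=3$.

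Parts (5) and (6) of Lemma \ref{Lem3} do \emph{not} reduce $\prod_i\beta_i$ below $3^{10}$. Using only (1)--(4), the bound from (\ref{alphalem}) in that subcase is $|\oplus_n\Gamma^{n/n+1}(X)_{\tors}|\leq 3^{10}$, and that estimate cannot be improved by the relations in (5),(6), which concern codimension-$5$ and $6$ monomials lying in $\Gamma^3$ --- they contribute nothing to the images $\Im(\res^{i/i+1})$. What (5) and (6) actually do is produce two explicit elements $a=3y_1^2y_2^2y_3-3y_1^2y_2y_3^2$ and $b=3y_1^2y_2^2y_3^2-6y_1^2y_2^2y_3$ lying in $\Gamma^3(X)\setminus\Gamma^4(X)$ (the non-membership in $\Gamma^4$ uses that every element of $\Gamma^4(X)$ is divisible by $9$), whose classes generate a copy of $(\Z/3\Z)^{\oplus 2}$ inside $\Gamma^{3/4}(X)_{\tors}$. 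It is this \emph{lower} bound on torsion in codimension $3$, combined with the lower bound $(\Z/3\Z)^{\oplus 8}\subseteq\Gamma^{2/3}(X)_{\tors}$ from Proposition \ref{lowerSB} and the total upper bound $3^{10}$, that forces equality everywhere and in particular $\Gamma^{2/3}(X)_{\tors}=(\Z/3\Z)^{\oplus 8}$. (When $d=9$ with all other indices equal to $3$, a variant argument using only the class of $a$ together with an alternative bounding either $\beta_6\leq 1$ or locating $9y_1^2y_2^2y_3^2$ as torsion in $\Gamma^{4/5}\oplus\Gamma^{5/6}$ brings the bound on $|\Gamma^{2/3}(X)_{\tors}|$ down to $3^8$.) So the decisive maneuver in the extremal case is not tightening $\prod\beta_i$ but rather pinning down torsion in higher quotients; as written, your proposal would stall at the bound $3^{10}$ on the total torsion.
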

\begin{proof}
Let $A_{1}, A_{2}, A_{3}$ be division $F$-algebras of degree $3$ and $X=\SB(A_{1})\times \SB(A_{2})\times \SB(A_{3})$. Set $|K^{n}(X_{E})/K^{n}(X)|\cdot \beta_{n}=|\Gamma^{n/n+1}(X_{E})\Im(\res^{n/n+1})|$, where $E$ is a splitting field of $X$ and $K^{n}(X_{E})$ (resp. $K^{n}(X)$) is the codimension $n$ part of $K(X_{E})$ (resp. $K(X)$) for $1\leq n\leq 6$.

We start with some simple observations. First, by index calculation we have $h_{i}=3 \text{ or }9$ for all $i\in I_{3}$. Secondly, it follows from (\ref{basisthree}) and $c_{2}(3x_{i})=3y_{i}^{2}$ that $3y_{i}^{2}\in \Gamma^{2}(X)$ for all $i\in I_{3}$. Moreover, if $e_i=3$ for some $i\in I_{3}$, then by a simple calculation of index we see that the indexes $d, g_{1}, g_{2}, g_{3}$ are either $3$ or $9$ and $9y_{1}y_{2}y_{3}=3y_{j}y_{k}(3y_{i})\in \Gamma^{3}(X)$ by Lemma \ref{Lem3} (1).

Let $i, j, k$ be distinct integers in $I_{3}$, $h=h_{1}h_{2}h_{3}$, and $g=g_{1}g_{2}g_{3}$. Set \[G=|\{i\in I_{3}\,|\, g_{i}=3\}|\,\,\, \text{ and } H=|\{i\in I_{3}\,|\, h_{i}=9\}|.\] We shall find upper bounds of $\beta_n$ for $1\leq n\leq 6$. First of all, by (\ref{basisthree}) we have $\beta_{1}\leq 1$. For the rest of them, we will find upper bounds using case by case analysis.

\framebox{{\it Case}: $e_1=e_2=e_3=3$.} By Lemma \ref{Lem3} (1), we have $\beta_{2}\leq 1$. If $d=3$, then by Lemma \ref{Lem3} (1), (3) and $9y_{1}y_{2}y_{3}\in \Gamma^{3}(X)$ we get $3y_{1}y_{2}y_{3}\in \Im(\res^{3/4})$. Therefore, we may exclude a basis element $y_{1}y_{2}y_{3}$ for the computation of $\beta_{3}$ below. 

In codimension $3$, if $H=0$ (i.e., $h_{1}=h_{2}=h_{3}=3$), then it follows from Lemma \ref{Lem3} ($1$), ($3$) that $3y_{i}^{2}y_{j}\in \Im(\res^{3/4})$ for all $i, j\in I_{3}$, thus $\beta_{3}\leq 1$. If $H=1$ with $h_{k}=9$, then by Lemma \ref{Lem3} ($1$), ($2$) we have $3y_{i}^{2}y_{k}, 3y_{i}y_{k}^{2}, 3y_{j}^{2}y_{k}, 3y_{j}y_{k}^{2}\in \Im(\res^{3/4})$. For the remaining elements $y_{i}^{2}y_{j}, y_{i}y_{j}^{2}$ of the basis, we replace these by $y_{i}^{2}y_{j}+y_{i}y_{j}^{2}, y_{i}y_{j}^{2}+y_{i}y_{k}^{2}$ (resp. $y_{j}y_{k}^{2}+y_{j}y_{i}^{2}$) if $g_{i}=3$ (resp. $g_{j}=3$), thus it follows from Lemma \ref{Lem3} ($4$) that 
\[h^{2}\beta_{3}\leq 3^{4}\cdot 3^{2} \text{ (resp. }\leq 3^{4}\cdot 3\cdot 9) \text{ if } g_{i}=3 \text{ or } g_{j}=3  \text{ (resp. otherwise)}.\]
Similarly, by Lemma \ref{Lem3} ($4$) and an appropriate change of basis depending on $G$ we get
\begin{equation*}
h^{2}\beta_{3}\leq
\begin{cases}
3^{3}\cdot 3^{G}\cdot 9^{3-G} \text{ (resp. } 3^{4}\cdot 3^{G}\cdot 9^{2-G})& \text{ if } H=3 \text{ (resp. } H=2, h_{i}=3), G=0, 1,\\
3^{3}\cdot 3^{2}\cdot 9 \text{ (resp. } 3^{4}\cdot 3^{2}) & \text{ if } H=3 \text{ (resp. } H=2, h_{i}=3), G=2, 3.
\end{cases}
\end{equation*}

In the remaining cases, it follows from \ref{Lem3} ($1$) that $9y_{i}y_{j}y_{k}^{2}, 9y_{i}^{2}y_{j}^{2}\in \Im(\res^{4/5})$ for all $i, j, k$, thus we obtain $(3^{3}\cdot g)\beta_{4}\leq 9^{6}$. Again, by Lemma \ref{Lem3} ($1$), we have $3y_{i}^{2}(3y_{j}^{2}y_{k}+3j_{j}y_{k}^{2})\in \Im(\res^{4/5})$ for all $i, j, k$, thus $g\beta_{5}\leq 9^{3}$. It follows from Lemma \ref{Lem3} (3) that we get $\beta_{6}\leq 3$.

Combining all bounds $\beta_{n}$, we see that 
\begin{equation}\label{concase1}
|\!\oplus \Gamma^{n/n+1}(X)_{\tors}|\leq 3^{16}/g^{2} \text{ (resp. } \leq 3^{15}/g^{2}),\,\,\ \text{ if } H=0 \text{ (resp. otherwise.)}
\end{equation}
Therefore, the maximum upper bound of (\ref{concase1}) is $3^{10}$ when $g_{i}=h_{i}=3$ for all $i\in I_{3}$ and $d=3, 9$. If $g_{i}=h_{i}=d=3$, then it follows by Lemma \ref{Lem3} ($1$), ($2$) that $a:=3y_{1}^{2}y_{2}^{2}y_{3}-3y_{1}^{2}y_{2}y_{3}^{2}, b:=3y_{1}^{2}y_{2}^{2}y_{3}^{2}-6y_{1}^{2}y_{2}^{2}y_{3} \in \Gamma^{3}(X)\backslash \Gamma^{4}(X)$ as any element of $\Gamma^{4}(X)$ is divisible by $9$. Hence, the classes of $a$ and $b$ give torsion elements of $\Gamma^{3/4}(X)$ of order $3$ since $3a=3y_{1}^{2}(3y_{2}^{2}y_{3})-3y_{1}^{2}(3y_{2}y_{3}^{2}), 3b=9y_{1}^{2}y_{2}^{2}y_{3}^{2}-3y_{1}^{2}(3y_{2}^{2}y_{3})\in \Gamma^{4}(X)$. Moreover, we have $a-b$, $a+b\not\in \Gamma^{4}$ as any element of $\Gamma^{4}(X)$ is divisible by $9$, thus the subgroups generated by $a$ and $b$ have trivial intersection. By Proposition \ref{lowerSB}, we have
\[\Gamma^{2/3}(X)_{\tors}=(\Z/3\Z)^{\oplus 8} \text{ and } \Gamma^{3/4}(X)_{\tors}=(\Z/3\Z)^{\oplus 2}.\]
In this case, by (\ref{generic}) we have 
\begin{equation}\label{caseonecon}
\CH^{2}(\bar{X})_{\tors}=(\Z/3\Z)^{\oplus 8},
\end{equation}
where $\bar{X}$ is the corresponding generic variety. If $g_{i}=h_{i}=3$ for all $i\in I_{3}$ and $d=9$, then by the same argument the class $a$ gives a torsion of $\Gamma^{3/4}(X)$. Moreover, we have either $9y_{1}^{2}y_{2}^{2}y_{3}^{2}$ is a torsion of order $3$ in $\Gamma^{4/5}(X)\oplus \Gamma^{5/6}(X)$ or $\beta_{6}\leq 1$. Therefore, in any case we obtain
\begin{equation}\label{case1}
|\!\oplus \Gamma^{n/n+1}(X)_{\tors}|\leq 3^{8}.
\end{equation}


\framebox{{\it Case}: $e_i=e_j=3$ and $e_k=9$.} We first find upper bounds of $\beta_{n}$ for $n\neq 3$. If $d=3$ or $G\geq 1$ (say, $g_{k}=3$), respectively, then by Lemma \ref{Lem3} ($3$), ($4$) we can replace one of the elements in $K^{2}(X_{E})$ by $\sum_{1\leq p, q\leq 3} y_{p}y_{q}$ or $y_{ij}-y_{ik}-y_{jk}$. Therefore, $3^{7}\cdot \beta_{2}\leq 3^{5}\cdot 3$ if $d=3$ or $G\geq 1$. Otherwise, $3^{7}\cdot \beta_{2}\leq 3^{5}\cdot 9$. In codimension $4$, it follows by Lemma \ref{Lem3} ($1$) that $9y_{i}y_{j}y_{k}^{2}, 9y_{i}^{2}y_{j}^{2}\in \Im(\res^{4/5})$ for all $i, j, k\in I_{3}$. Hence, $(3^{4}g)\beta_{4}\leq 9^{6}$. In codimension $5$, by Lemma \ref{Lem3} ($1$), we have $3y_{i}^{2}(3y_{j}^{2}y_{k}+3y_{j}y_{k}^{2}), 3y_{j}^{2}(3y_{i}^{2}y_{k}+3y_{i}y_{k}^{2})\in \Im(\res^{5/6})$. If in addition $h_{i}=3$ (resp. $h_{j}=3$), then by Lemma \ref{Lem3} ($2$) we obtain $3y_{i}^{2}(3y_{j}^{2}y_{k}-3y_{j}y_{k}^{2})\in \Im(\res^{5/6})$ (resp. $3y_{j}^{2}(3y_{i}^{2}y_{k}-3y_{i}y_{k}^{2})\in \Im(\res^{5/6})$). Moreover, if $d=3$, then by Lemma \ref{Lem3} ($3$) $3y_{k}^{2}(-3y_{1}y_{2}y_{3}+3\sum_{1\leq p, q\leq 3} y_{p}^{2}y_{q})=9y_{i}^{2}y_{j}y_{k}^{2}+9y_{i}y_{j}^{2}y_{k}^{2}\in \Im(\res^{5/6})$. Therefore, we conclude that $g\beta_{5}\leq 9^{3}$ (resp. $\leq 9^{2}\cdot 27$) if one of $h_{i}, h_{j}, d$ is equal to $3$ (resp. otherwise). By Lemma \ref{Lem3} (3), we have $\beta_{6}\leq 3$. In conclusion, we get
\begin{equation}\label{case2prime}
g^{2}\beta_{2}\beta_{4}\beta_{5}\beta_{6}\leq 
\begin{cases}
3^{16} &\text{ if }G=0,\\
3^{15} &\text{ if }G\geq 1, d=h_{i}=h_{j}=9\\
3^{14} &\text{ otherwise.}
\end{cases}
\end{equation}

Now we calculate upper bounds for $\beta_{3}$ according to $H$. Since $9y_{1}y_{2}y_{3}\in \Gamma^{3}(X)$ and $3(y_{i}^{2}y_{j}+y_{i}y_{j}^{2})-3y_{1}y_{2}y_{3}\in \Im(\res^{3/4})$ if $d=3$, we may exclude $y_{1}y_{2}y_{3}$ for the computation of $\beta_{3}$ as in the previous case. If $H\geq 2$, then by Lemma \ref{Lem3} ($1$), ($2$), ($4$) we obtain $h^{2}\beta_{3}\leq 3^{2}\cdot 9\cdot g$ (resp. $\leq 3^{3}\cdot g$) if $H=3$ (resp. $H=2$) (we always use $y_{j}^{2}y_{k}+y_{j}y_{k}^{2}, y_{i}^{2}y_{k}+y_{i}y_{k}^{2}$ as a part of basis of $K^{3}(X_{E})$ and also use $y_{i}y_{j}^{2}+y_{i}y_{k}^{2}+\sum_{1\leq p, q\leq 3}y_{p}^{2}y_{q}$ if $g_{i}=3$). If $H=1$ (resp. $H=0, g_{k}=3$), then it follows from Lemma \ref{Lem3} ($1$), ($2$), ($4$) that $h^{2}\beta_{3}\leq 3^{4}\cdot 9^{2}$ (resp. $\leq 3^{5}\cdot 3$), i.e., $\beta_{3}\leq 1$. Finally, if $H=0$, $g_{k}=9$, then by Lemma \ref{Lem3} ($1$), ($2$) we have $h^{2}\beta_{3}\leq 3^{5}\cdot 9$. Hence, by (\ref{case2prime}) we see that for all cases
\begin{equation}\label{case2}
|\!\oplus \Gamma^{n/n+1}(X)_{\tors}|\leq 3^{8}.
\end{equation}


\framebox{{\it Case}: $e_i=3$ and $e_{j}=e_k=9$.} We slightly modify the previous argument to find upper bounds of $\beta_{n}$ for $n\neq 3$. In codimension $2$, by the same argument as in the previous case we have $3^{8}\beta_{2}\leq 3^{4}\cdot \min\{g_{j}, g_{k}\}\cdot \min\{d, g_{i}\}$. In codimension $4$, it follows from Lemma \ref{Lem3} ($1$) that $9y_{i}^{2}y_{j}y_{k}, 3y_{i}\cdot 3(y_{j}^{2}y_{k}+y_{j}y_{k}^{2})\in \Im(\res^{4/5})$. Hence, by Lemma \ref{Lem3} ($2$) we obtain $(3^{5}g)\beta_{4}\leq 9^{6}$ (resp. $\leq 9^{5}\cdot 27$) if $H\neq 3$ (resp. $H=3$). In codimension $5$ and $6$, by the same argument as in the previous case we get
\[g\beta_{5}\leq 9^{3} \text{ (resp. }\leq 9^{4-H}\cdot 27^{H-1} )\,\, \text{ if } d=3 \text{ or } H\leq 1\text{ (resp. otherwise).}\]
and $\beta_{6}\leq 3$. Hence, 
\begin{equation}\label{case3prime}
g^{2}\beta_{2}\beta_{4}\beta_{5}\beta_{6}\leq 
\begin{cases}
3^{10}\cdot \min\{d, g_{i}\}\cdot \min\{g_{j}, g_{k}\} &\text{ if }d=3 \text{ or } H\leq 1,\\
3^{11}\cdot  g_{i}\cdot \min\{g_{j}, g_{k}\} &\text{ if }d=9, H=2,\\
3^{13}\cdot  g_{i}\cdot \min\{g_{j}, g_{k}\} &\text{ if }d=9, H=3.
\end{cases}
\end{equation}

In codimension $3$, by the same argument as in the previous case together with Lemma \ref{Lem3} ($1$), ($2$), ($4$) we have 
\begin{equation}\label{case3primeprime}
h^{2}\beta_{3}\leq 
\begin{cases}
3\cdot 9^{2}\cdot g  \text{ (resp. } 3^{2}\cdot 9\cdot g) &\text{ if }H=3 \text{ (resp. } H=2),\\
3^{3}\cdot 9^{3} \text{ (resp. } 3^{4}\cdot 9^{2}) &\text{ if }H=1 \text{ (resp. } H=0).
\end{cases}
\end{equation}
Therefore, by (\ref{case3prime}) and (\ref{case3primeprime}) we see that for all cases 
\begin{equation}\label{case3}
|\!\oplus \Gamma^{n/n+1}(X)_{\tors}|\leq 3^{8}.
\end{equation}


\framebox{{\it Case}: $e_1=e_2=e_3=9$.} In codimension $2$, we replace one of the basis elements in $K^{2}(X_{E})$ by $y_{j}y_{k}-y_{i}y_{k}-y_{i}y_{j}$ (resp. $\sum_{1\leq p, q\leq 3}y_{p}y_{q}$) if $g_{i}=3$ (resp. $d=3$). Then, by Lemma \ref{Lem3} ($3$), ($4$) we obtain
\[(3^{3}\cdot 9^{3})\beta_{2}\leq 3^{6} \text{ (resp. }\leq 3^{3+G+[3/d]}\cdot 9^{3-(G+[3/d])} )\,\, \text{ if } G=d=3 \text{ (resp. otherwise).}\]
In codimension $3$, we use $y_{j}^{2}y_{k}-y_{j}y_{k}^{2}$ (resp. $y_{i}y_{j}^{2}+y_{i}y_{k}^{2}+\sum_{1\leq p, q\leq 3}y_{p}^{2}y_{q}+y_{1}y_{2}y_{3}$) as a basis element in $K^{3}(X_{E})$ if $h_{i}=3$ (resp. $g_{i}=3$). Moreover, if $d=3$, then we also use $-y_{1}y_{2}y_{3}+\sum_{1\leq p, q\leq 3}y_{p}^{2}y_{q}$ as a basis element in $K^{3}(X_{E})$. Then, as $9y_{1}y_{2}y_{3}\in \Gamma^{3}(X)$, it follows from Lemma \ref{Lem3} ($2$), ($3$), ($4$) that 
\[h^{2}\beta_{3}\leq 3^{3-H+G}\cdot 9^{3+H-G}\, (\text{resp. }\leq 3^{2-H+G}\cdot 9^{3+H-G}) \text{ if }d=3, 9 \,(\text{resp. } d=27). \]
In codimension $4$, we replace the element $y_{i}y_{j}y_{k}^{2}$ by $y_{i}(y_{j}^{2}y_{k}-y_{j}y_{k}^{2})$ if $h_{i}=3$. Since $9y_{i}^{2}y_{j}^{2}, 27y_{i}^{2}y_{j}y_{k}\in \Gamma^{4}(X)$ for all $i, j, k\in I_{3}$, it follows from Lemma \ref{Lem3} ($2$) that
\[(9^{3}\cdot g)\beta_{4}\leq 9^{3}\cdot 9^{3-H}\cdot 27^{H} \text{ (resp. }\leq 9^{5}\cdot 27)\,\, \text{ if } H\neq 0 \text{ (resp. otherwise).}\]

In codimension $5$, if $d=3$, then it follows by Lemma \ref{Lem3} (3) that $3y_{i}^{2}\cdot (6y_{1}y_{2}y_{3}+3\sum_{1\leq p, q\leq 3}y_{p}^{2}y_{q})=9y_{i}^{2}y_{j}^{2}y_{k}\in \Im(\res^{5/6})$ for all $i, j, j\in I_{3}$. If $d\neq 3$ and $h_{i}=3$, then we replace the element $y_{i}^{2}y_{j}^{2}y_{k}$ by $y_{i}^{2}(y_{j}^{2}y_{k}-y_{j}y_{k}^{2})$. Hence, by Lemma \ref{Lem3} ($2$) we get
\begin{equation*}
g\beta_{5}\leq
\begin{cases}
9^{3} &\text{ if }d=3,\\
9^{2}\cdot 27 (\text{ resp. } 9^{3-H}\cdot 27^{H}) &\text{ if }d\neq 3, H=0 \text{ (resp. }H\neq 0).
\end{cases}
\end{equation*} 
Since $27y_{1}^{2}y_{2}^{2}y_{3}^{2}\in \Gamma^{6}(X)$ and $9y_{1}^{2}y_{2}^{2}y_{3}^{2}\in \Gamma^{6}(X)$ if $d=3$, we obtain $\beta_{6}\leq 1$ (resp. $\leq 3$) if $d=27$ (resp. otherwise). Combining all bounds of $\beta_{n}$, we see that for all cases 
\begin{equation}\label{case4}
|\!\oplus \Gamma^{n/n+1}(X)_{\tors}|\leq 3^{6}.
\end{equation}
In conclusion, the result follows from (\ref{caseonecon}), (\ref{case1}), (\ref{case2}), (\ref{case3}), and (\ref{case4}).\end{proof}

\section{Product of Quadric surfaces}\label{section6}


In this section, we obtain upper bounds for the torsion in Chow group of codimension $2$ of the product of two quadric surfaces (Theorem \ref{twoquadricsurface}) and the product of three quadric surfaces with the same discriminant (Proposition \ref{threequadricsurface}). In the case of the product of two quadric surfaces, we provide a sharp lower bound in the gamma filtration (Proposition \ref{lowerboundgammaintwoquadric}).

Let $F$ be a field of characteristic different from $2$ and let $q=\langle c, -a, -b, ab\rangle$ be a nondegenerate quadratic form over $F$  for $a, b, c\in F^{\times}$. If the discriminant is trivial, then the quadric surface corresponding to $q$ is birational to $\P^{1}\times \SB(Q)$, where $Q=(a, b)$ is a  quaternion $F$-algebra. Otherwise, the quadric is isomorphic to $R_{L/F}(\SB(Q))$, where  $R_{L/F}$ is the Weil restriction over a quadratic field $L=F(\sqrt{c})$. We write $\disc Q$ for the discriminant $c$.

Consider two quadric surfaces with the corresponding quaternions $Q_{1}$, $Q_{2}$ and quadratic extensions $L_{1}$, $L_{2}$ as above. We set
\begin{equation}\label{associatedvariety}
X=
\begin{cases}
\SB(Q_{1})\times \SB(Q_{2}) & \text{ if } \disc Q_i=1,\\
\SB(Q_{1})\times R_{L_{2}/F}(\SB(Q_{2})) & \text{ if } \disc Q_{1}=1\neq \disc Q_{2},\\
R_{L_{1}/F}(\SB(Q_{1}))\times R_{L_{2}/F}(\SB(Q_{2})) & \text{ if } \disc Q_{i}\neq 1
\end{cases}
\end{equation}
for all $i$. Then, by \cite[Corollary 2.5]{IzhKar98} the group $\CH^{2}(X)_{\tors}$ of the first and second cases of (\ref{associatedvariety}) is isomorphic to that of the product of two quadric surfaces. Hence, for torsion it suffices to consider $X$, called the variety associated to the product of two quadric surfaces. 

Consider the last case of (\ref{associatedvariety}). If $\ind(Q_{1})_{L_{1}}=\ind(Q_{2})_{L_{2}}=1$, then the associated variety $X$ has torsion-free Chow groups. Thus, we may assume that $\ind(Q_{1})_{L_{1}}=2$. We choose a splitting field $E$ of X as follows. If $\ind(Q_{2})_{L_{2}}=1$, then we take a maximal subfield ($\neq L_{2}$) of $Q_{1}$ for $E$. Otherwise, we take for $E$ a common maximal subfield ($\neq L_{1}, L_{2}$) of $Q_{1}$ and $Q_{2}$ if $\ind(Q_{1}\tens Q_{2})\leq 2$ or the tensor product of maximal subfields $E_{1}$($\neq L_{2}$) of $Q_{1}$ and $E_{2}$($\neq L_{1}$) of $Q_{2}$ if $\ind(Q_{1}\tens Q_{2})=4$. Hence, $d:=[E:F]=4$ if $\ind(Q_{1}\tens Q_{2})=4$ and $d=2$ otherwise. For the second case (\ref{associatedvariety}), we choose a splitting field $E$ in the same way.

The theorem below was proven in \cite{IzhKar98}. Here, we give an elementary short proof which does not use any cohomological method or $K$ theory of quadrics. Moreover, we find upper bound of the total torsion in the topological filtration of the product of two quadric surfaces with nontrivial discriminants.

\begin{theorem}\cite[Theorems 5.1, 5.7, 5.8, 5.9]{IzhKar98}\label{twoquadricsurface}
Let $X$ be the variety associated to the product of two quadric surfaces. Then, the group $\CH^{2}(X)_{\tors}$ is either $0$ or $\Z/2\Z$, i.e., $\mathcal{M}(\QS_{2})\leq 2$.
\end{theorem}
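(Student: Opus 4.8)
The plan is to follow the same machinery used throughout the paper: reduce the question about $\CH^2(X)_{\tors}$ to a computation in the topological filtration via the surjection \eqref{tolchow}, and then bound the torsion in the filtration using formula \eqref{alphalem}. Concretely, I would set $\alpha_n = |T^{n/n+1}(X_E)/\Im(\res^{n/n+1})|$ and aim to show that $\prod_n \alpha_n$ divided by $|K(X_E)/K(X)|$ is at most $2$, which by \eqref{alphalem} forces $|\!\oplus T^{n/n+1}(X)_{\tors}| \leq 2$, and in particular $|\CH^2(X)_{\tors}| = |T^{2/3}(X)_{\tors}| \leq 2$. Since the reductions at the start of the section already handle the split and trivial-discriminant cases, the real content is the last case of \eqref{associatedvariety}, where $X = R_{L_1/F}(\SB(Q_1)) \times R_{L_2/F}(\SB(Q_2))$ with $\ind(Q_1)_{L_1} = 2$.

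\textbf{Setting up the Grothendieck ring.}
First I would write down an explicit basis of $K(X)$ and of $K(X_E)$ adapted to the Weil restrictions. A Severi--Brauer surface of a quaternion is only $1$-dimensional as a conic, but the Weil restriction $R_{L_i/F}(\SB(Q_i))$ is a surface whose splitting behaviour is governed by the field $L_i$ together with the quaternion $Q_i$; its $K$-theory is described by the transfer/restriction formalism of \eqref{Quillengen}--\eqref{Quillenbasis} applied over $L_i$. Using the chosen splitting field $E$ (with $d = [E:F] \in \{2,4\}$ as specified), I would compute the index $|K(X_E)/K(X)|$ as a product of the relevant indices $\ind(Q_1)_{L_1}$, $\ind(Q_2)_{L_2}$, $\ind(Q_1\tens Q_2)$, and the degrees of the field extensions. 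Passing to the shifted variables $y_i = x_i - 1$ as in Example~\ref{twoconicsex} and Proposition~\ref{threeconics}, I would identify which basis elements already lie in $T^2(X)$ or $T^3(X)$ directly from \eqref{gammatwo}.

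\textbf{Bounding the $\alpha_n$ by hand.}
The heart of the proof is to produce enough elements in the image of each restriction map $\res^{n/n+1}$ to make the $\alpha_n$ small. For $\alpha_1$ I expect the diagonal-type embeddings of the subvarieties (mirroring the $y_1 + y_2 \in \Im(\res^{1/2})$ argument of Example~\ref{twoconicsex}) to realize the rank-one generators, forcing $\alpha_1$ to its minimal value. For the top-codimension piece I would exhibit a factorization of the top basis element through products of lower-codimension classes times a class divisible by $2$, exactly as in the $2y_1y_2y_3 = 2y_1(dy_2y_3)$ computations in Proposition~\ref{threeconics}; the discriminant structure of the $L_i$ should supply the extra class needed. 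The case split $d = 2$ versus $d = 4$ (i.e.\ $\ind(Q_1\tens Q_2) \leq 2$ versus $= 4$) will have to be carried separately, and the sub-case $\ind(Q_2)_{L_2} = 1$ should collapse to an even simpler bound.

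\textbf{Main obstacle.}
The hard part will be the careful bookkeeping of the transfer maps for the Weil restrictions: unlike the conic case, the generators of $K(X)$ are pushed forward from $L_1 \tens L_2$-data, and tracking exactly which $\Z$-linear combinations of the $y_i^{a}y_j^{b}$ land in $T^2$ versus $T^3$ (and are hit by restriction) requires knowing the precise tautological classes on $R_{L_i/F}(\SB(Q_i))_E$. I expect the combinatorial argument showing $\alpha_3 \leq 2$ in the $d=4$ case to be the delicate point, since there one must rule out a second independent torsion class; the divisibility-by-$2$ of every element of $T^1(X)\cdot T^2(X)$, together with the single indivisible top class, should close the argument and yield $\CH^2(X)_{\tors} \in \{0, \Z/2\Z\}$, hence $\mathcal{M}(\QS_2) \leq 2$.
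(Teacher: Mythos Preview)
Your overall strategy is the same as the paper's: bound the $\alpha_n$ and apply \eqref{alphalem}. But there are two structural points you are missing that the paper uses essentially.

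First, the second case of \eqref{associatedvariety} (exactly one discriminant trivial) is \emph{not} handled by the preliminary reductions; it is one of the three cases the paper actually works through. More importantly, inside the third case of \eqref{associatedvariety} you do not separate $L_1=L_2$ from $L_1L_2$ biquadratic. These two situations have different Galois groups $\Gal(EL/E)$ acting on $K(X_{EL})=K\big((\P^1)^4\big)$, and hence produce genuinely different invariant subrings $K(X_E)$ and different bases for $K(X)$; in the $L_1=L_2$ case the basis of $K(X_E)$ acquires the extra pair $x_{13}+x_{24}$, $x_{14}+x_{23}$ in place of the single element $(x_1+x_2)(x_3+x_4)$, which changes the computation of $|K(X_E)/K(X)|$ and the bounds on $\alpha_2,\alpha_3$. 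Without this split you cannot write down the correct basis, and the bookkeeping you anticipate as the ``hard part'' cannot even begin.

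Second, your plan conflates $X_E$ with the fully split variety. Over $E$ the quaternions split but the quadratic extensions $L_i$ typically do not, so $X_E$ is still a product of Weil restrictions $R_{EL_i/E}(\P^1)$; only over $EL$ does one get $(\P^1)^4$. Consequently the relevant index is $|K(X_E)/K(X)|$, computed from the Galois-invariant basis, not from \eqref{Quillenbasis} directly. Relatedly, $\alpha_1=1$ in the paper comes from the basis element $z_{12}=y_1+y_2+y_1y_2\in K(X)$ projecting to $y_1+y_2\in\Im(\res^{1/2})$, not from a diagonal embedding; and the top $\alpha$ is bounded by $d$ via a transfer (corestriction) argument, not by a factorization of the type you describe. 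The diagonal/partial-diagonal embeddings do appear, but in controlling $\alpha_2$ and $\alpha_3$ in specific subcases (e.g.\ when $\ind(Q_1\otimes Q_2)=1$ or when $L_1=L_2$ and $f=1$).
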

\begin{proof}
Let $Q_{i}$ be a quaternion algebra with a quadratic extension $L_{i}$ for $i=1, 2$, $X$ the variety associated to the product of two quadric surfaces, and E the splitting field of $X$ as above. If $\disc Q_{i}=1$ for all $i$, then the variety $X$ is torsion free. In the remaining cases, we find upper bounds of $\alpha_{n}:=|T^{n/n+1}(X_{E})/\Im(\res^{n/n+1})|$ for each of the following $3$ cases.

\framebox{{\it Case}: $L_{1}L_{2}:=L_{1}\tens L_{2}$ is a biquadratic field extension.} Let $L=L_{1}L_{2}$. Then, we have $X_{E}=R_{EL_{1}/E}(\P^{1})\times R_{EL_{2}/E}(\P^{1})$ and $X_{EL}=(\P^{1})^{4}$, which have torsion-free Chow groups. Let $d_{i}=\ind(Q_{i})_{L}$ and $e=\ind(Q_{1}\tens Q_{2})_{L}$. Then, by the action of the Galois group of $EL/E$, we have the following bases of $K(X_{E})$ and $K(X)$, respectively:
\begin{equation}\label{case1KXE}
\{1, x_{2i-1}+x_{2i}, x_{12}, x_{34}, (x_1+x_2)(x_3+x_4), x_{12}(x_{3}+x_{4}), x_{34}(x_{1}+x_{2}), x_{1234}\, |\, i=1,2\} \text{ and}
\end{equation}
\begin{equation}\label{case1KX}
\{1, d_{i}(x_{2i-1}+x_{2i}), x_{12}, x_{34}, e(x_1+x_2)(x_3+x_4), d_{2}x_{12}(x_{3}+x_{4}), d_{1}x_{34}(x_{1}+x_{2}), x_{1234}\},
\end{equation}
where $x_{i}$ is the pullback of the class of tautological line bundle on the projective space in $K(X_{EL})$ for all $i\in I_{4}$ and $x_{i_{1}\cdots i_{m}}=x_{i_{1}}\cdots x_{i_{m}}$ for $1\leq i_{1}<\ldots< i_{m}\leq 4$. Hence, $|K(X_{E})/K(X)|=d_{1}^{2}d_{2}^{2}e$. If $d_{1}=d_{2}=1$, then $e=1$, thus we may assume that $d_{1}d_{2}\geq 2$.

Let $y_{i}=x_{i}-1$ and $y_{i_{1}\cdots i_{m}}=y_{i_{1}}\cdots y_{i_{m}}$ for $1\leq i_{1}<\ldots< i_{m}\leq 4$. We will use other bases for $K(X_{E})$ the basis (\ref{case1KXE}) by replacing $x_{i}$ by $y_{i}$ and for $K(X)$
\begin{equation}\label{case1KXY}
\{1, d_{i}(y_{2i-1}+y_{2i}), z_{12}, z_{34}, e(y_{1}+y_{2})(y_{3}+y_{4}), d_{2}z_{12}(y_{3}+y_{4}), d_{1}z_{34}(y_{1}+y_{2}), z_{12}z_{34}\},
\end{equation} 
where $z_{12}=y_{1}+y_{2}+y_{12}$ and $z_{34}=y_{3}+y_{4}+y_{34}$. As $z_{12}, z_{34}\in K(X)$, we have $y_{1}+y_{2}, y_{3}+y_{4}\in \Im(\res^{1/2})$, thus $\alpha_{1}=1$ and $(y_{1}+y_{2})(y_{3}+y_{4})\in \Im(\res^{2/3})$. In addition, it follows from the basis and (\ref{gammatwo}) that $d_{1}y_{12}, d_{2}y_{34}\in T^{2}(X)$. Hence, $\alpha_{2}\leq d_{1}d_{2}$.

If $\ind(Q_{1}\tens Q_{2})\!=1$, then by the closed embeddings $\SB(Q_{1})\times R_{L_{2}/F}(\SB(Q_{1}))\hookrightarrow X$ and $R_{L_{1}/F}(\SB(Q_{1}))\times \SB(Q_{1})\hookrightarrow X$ we have $y_{34}(y_{1}+y_{2}), y_{12}(y_{3}+y_{4})\in \Im(\res^{3/4})$, respectively. Otherwise, it follows from $d_{1}y_{12}\cdot (y_{34}+y_{3}+y_{4})$ and $d_{2}y_{34}\cdot (y_{12}+y_{1}+y_{2})$ that $d_{1}y_{12}(y_{3}+y_{4}), d_{2}y_{34}(y_{1}+y_{2})\in \Im(\res^{3/4})$. Hence, $\alpha_{3}\leq 1$ (resp. $\leq d_{1}d_{2}$) if $\ind(Q_{1}\tens Q_{2})=1$ (resp. otherwise). By a transfer argument, we have $\alpha_{4}\leq d$. Hence, we obtain
$$|\!\oplus T^{n/n+1}(X)_{\tors}|= 1 \text{ (resp. } \leq d/e) \text{ if } \ind(Q_{1}\tens Q_{2})=1 \text{ (resp. otherwise)}.$$ If $e=1$, then $\ind(Q_{1}\tens Q_{2})\leq 2$, thus $d=2$ and $|T^{2/3}(X)_{\tors}|\leq 2$. Moreover, the group $\oplus T^{n/n+1}(X)_{\tors}$ is trivial if $e=4$ or $e=d=2$ or $\ind(Q_{1}\tens Q_{2})=1$.


\framebox{{\it Case}: $L_{1}=L_{2}$.} Let $L=L_{1}=L_{2}$. Then, $X_{E}=R_{EL/E}(\P^{1})\times R_{EL/E}(\P^{1})$. Applying the same argument as in the previous case, we have the basis (\ref{case1KXE}) (resp. (\ref{case1KX})) replacing $(x_{1}+x_{2})(x_{3}+x_{4})$ (resp. $e(x_{1}+x_{2})(x_{3}+x_{4})$) with two elements $x_{13}+x_{24}$ and $x_{14}+x_{23}$ (resp. $e(x_{13}+x_{24})$ and $e(x_{14}+x_{23})$) for $K(X_{E})$ (resp. $K(X)$). As $d_{1}=2$, $|K(X_{E})/K(X)|=4d_{2}^{2}e^2$. Similarly, we use other bases for $K(X_{E})$ the basis by replacing $x_{i}$ with $y_{i}$ and for $K(X)$ the basis (\ref{case1KXY}) replacing $e(y_{1}+y_{2})(y_{3}+y_{4})$ with $e(y_{13}+y_{24}+\sum_{i=1}^{4} y_{i})$ and $e(y_{14}+y_{23}+\sum_{i=1}^{4} y_{i})$.

Repeating the same argument as before yields $\alpha_{1}=1$. In codimension $2$, we have $2y_{12}, d_{2}y_{34}, (y_{1}+y_{2})(y_{3}+y_{4})\in \Im(\res^{2/3})$. If $e=1$, then $d_{2}=d=2$, and $X=R_{L/F}(\SB(Q_{1}))\times R_{L/F}(\SB(Q_{1}))$, thus by the diagonal embedding $R_{L/F}(\SB(Q_{1})) \hookrightarrow X$ the sum of all basis elements in $K^{2}(X_{E})$ is contained in $\Im(\res^{2/3})$. Moreover, if $e\neq 1$, then $e(y_{13}+y_{24})$, $e(y_{14}+y_{23})\in T^{2}(X)$. Therefore, we obtain $\alpha_{2}\leq 2d_{2}e$ in both cases.

If $e=1$, then $z_{12}(y_{12}+y_{34}+y_{13}+y_{24}+y_{14}+y_{23}), 2y_{12}(y_{3}+y_{4})\in \Im(\res^{3/4})$. Otherwise, we obtain $2y_{12}(y_{3}+y_{4}), d_{2}y_{34}(y_{1}+y_{2})\in  \Im(\res^{3/4})$. Hence, we have $\alpha_{3}\leq 2$ (resp. $\leq 2d_{2}$) if $e=1$ (resp. otherwise). Finally, we get $\alpha_{4}\leq d$, thus \begin{equation}\label{case2torsioneq}
|\!\oplus T^{n/n+1}(X)_{\tors}|= 1 \text{ (resp. } d/e ) \text{ if } e=1 \text{ (resp. otherwise)}.
\end{equation} 
Hence, the group $\oplus T^{n/n+1}(X)_{\tors}$ is trivial except the case where $e=2$ and $d=4$. In this case, we can further reduce the upper bound of $\alpha_{4}$ to $2$ if $2y_{1234}\in T^{4}(X)$. Hence, the group $\oplus T^{n/n+1}(X)_{\tors}$ is trivial in this case. If $2y_{1234}\notin T^{4}(X)$, then the class of $2y_{1234}=2y_{12}\cdot z_{34}-z_{12}\cdot 2(y_{13}+y_{24})\in T^{3}(X)$ gives a torsion element of order $2$ in $T^{3/4}(X)_{\tors}$. Therefore, the group $T^{2/3}(X)_{\tors}$ is trivial in all cases.

\framebox{{\it Case}: $\disc Q_{1}=1$.} Then, we have $X_{E}=\P^{1}\times R_{L_{2}E/E}(\P^{1})$ and $X_{EL_{2}}=(\P^{1})^{3}$, thus we obtain the following bases of $K(X_{E})$ and $K(X)$, respectively:
\[\{1, x_{1}, x_{2}+x_{3}, x_{23}, x_{1}(x_{2}+x_{3}), x_{123}\} \text{ and } \{1, d_{1}x_{1}, d_{2}(x_{2}+x_{3}), x_{23}, ex_{1}(x_{2}+x_{3}), d_{1}x_{123}\},\]
where $d_{1}=\ind(Q_{1})_{L_{2}}, d_{2}=\ind(Q_{2})_{L_{2}}, e=\ind(Q_{1}\tens Q_{2})_{L_{2}}$, and $x_{i}$ is the pullback of the class of tautological line bundle on the projective space in $K(X_{EL_{2}})$ for $i\in I_{3}$. Hence, $|K(X_{E})/K(X)|=d_{1}^{2}d_{2}e$. We use other bases for $K(X_{E})$ the above basis by replacing $x_{i}$ by $y_{i}$ and for $K(X)$ 
\begin{equation}\label{case3KXYBASIS}
\{1, d_{1}y_{1}, d_{2}(y_{2}+y_{3}), z_{23}, e(y_{1}+1)(y_{2}+y_{3}), d_{1}y_{1}z_{23}\}, \text{ where } z_{23}=y_{2}+y_{3}+y_{23}.
\end{equation}

Obviously, $\alpha_{1}\leq d_{1}$. In codimension $2$, we have $d_{2}y_{23}, d_{1}y_{1}(y_{2}+y_{3})\in T^{2}(X)$. If $e=1$, then $(y_{12}+y_{13}+y_{2}+y_{3}) +z_{23}-2(y_{2}+y_{3})=y_{12}+y_{13}+y_{23}\in T^{2}(X)$. Hence, we get $\alpha_{2}\leq \min\{d_{1}, d_{2}\}$ (resp. $\leq d_{1}d_{2}$) if $e=1$ (resp. otherwise). Finally, we have $\alpha_{3}\leq d$, thus the same upper bound (\ref{case2torsioneq}) is obtained for $|\!\oplus T^{n/n+1}(X)_{\tors}|$. Therefore, the group $\oplus T^{n/n+1}(X)_{\tors}$ is trivial if $e=1$ or $e=d=2$ or $e=4$. Assume that $e=2$ and $d=4$. Then it follows from (\ref{case3KXYBASIS}) that $2y_{123}\in T^{2}(X)$. If $2y_{123}\in T^{3}(X)$, then $\alpha_{3}\leq 2$. Hence, the group $\oplus T^{n/n+1}(X)_{\tors}$ is trivial in this case. Otherwise, the class of $2y_{123}$ gives a torsion of order $2$ in $T^{2/3}(X)$. In any case, $|T^{2/3}(X)_{\tors}|\leq 2$, thus the result follows from (\ref{tolchow}).
\end{proof}
\begin{remark}\label{remarkforproductofquadric}
Note that the proof of Theorem \ref{twoquadricsurface} indicates when the group $\CH^{2}(X)_{\tors}$ is trivial. Moreover, the proof still works if we replace the topological filtration by the gamma filtration.
\end{remark}

Now we provide a nontrivial torsion subgroup in the gamma filtration:

\begin{proposition}\label{lowerboundgammaintwoquadric}
With the above notations, we have $\Gamma^{2/3}(X)_{\tors}=\Z/2\Z$ if $L$ is a biquadratic extension, $e=2$, and $d=4$.
\end{proposition}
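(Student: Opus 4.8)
The plan is to combine the upper bound that is already implicit in Theorem~\ref{twoquadricsurface} with an explicit $2$-torsion class living in codimension $2$. I keep all the notation of Case~1 ($L=L_1L_2$ biquadratic) in the proof of Theorem~\ref{twoquadricsurface}: the classes $z_{12}=y_1+y_2+y_{12}$ and $z_{34}=y_3+y_4+y_{34}$ in $K(X)$, the indices $e_i=\ind(Q_i)_L$, $f=\ind(Q_1\tens Q_2)_L$ and $d=[E:F]$, and the relations $y_k^2=0$. By Remark~\ref{remarkforproductofquadric} the entire computation of that proof goes through for the gamma filtration, so in the present case ($f=2$, $d=4$) we get $|\!\oplus\Gamma^{n/n+1}(X)_{\tors}|\leq d/f=2$. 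Hence $\Gamma^{2/3}(X)_{\tors}$ is $0$ or $\Z/2\Z$, and it remains to exhibit one nonzero class of order $2$; this is where the hypotheses are used, since the bound is attained precisely when $e_1=e_2=2$.

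Assuming $e_1=e_2=2$, I would take the element
\[
w=2\,z_{12}z_{34}-f\,(y_1+y_2)(y_3+y_4)\in K(X).
\]
Expanding $z_{12}z_{34}$ with $y_k^2=0$ and using $f=2$ gives $w=2\,(y_{123}+y_{124}+y_{134}+y_{234})+2\,y_{1234}$, which has no component in codimension $2$ over $E$; thus $w\in\Gamma^2(X)$ by (\ref{gammatwo}) and $\res^{2/3}(w)=0$, so that $[w]\in\Gamma^{2/3}(X)$ is a torsion class. That $2[w]=0$ follows from $2w\in\Gamma^3(X)$: one has $z_{12}\cdot f(y_1+y_2)(y_3+y_4)=4(y_{123}+y_{124})$ and $z_{34}\cdot f(y_1+y_2)(y_3+y_4)=4(y_{134}+y_{234})$ in $\Gamma^1(X)\cdot\Gamma^2(X)$, while $4y_{1234}=(e_1y_{12})(e_2y_{34})\in\Gamma^4(X)$; adding these gives $2w\in\Gamma^3(X)$.

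The decisive step, and the one I expect to be the main obstacle, is to prove $w\notin\Gamma^3(X)$, i.e. $[w]\neq0$. Over $E$ the part of $K(X_E)$ in codimension $\geq 3$ is free on $P=y_{123}+y_{124}$, $R=y_{134}+y_{234}$ and $y_{1234}$, so $\Gamma^3(X)$ is a sublattice of $\Z P\oplus\Z R\oplus\Z y_{1234}$, and it suffices to produce a linear form on the coordinates $(a,b,c)$ of $aP+bR+cy_{1234}$ that is divisible by $4$ on every generator of $\Gamma^3(X)$ but takes the value $c-a-b=-2$ on $w=(2,2,2)$; the form $(a,b,c)\mapsto c-a-b$ is the natural candidate. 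Verifying the congruence $4\mid c-a-b$ on all of $\Gamma^3(X)$ amounts to running through the generators $\gamma_{d_1}(u_1)\cdots\gamma_{d_i}(u_i)$ with $u_j\in\Gamma^1(X)$ and $\sum d_j\geq3$ and computing their codimension $3$ and $4$ components by the same Chern-class technique used in Lemma~\ref{fourconicslem} and Proposition~\ref{lowerSB}; here $y_k^2=0$ together with $f=2$ force every such coefficient to be even, and $e_1=e_2=2$ forces the relevant ones to be divisible by $4$. Granting this divisibility, $[w]$ is nonzero of order $2$, so $\Gamma^{2/3}(X)_{\tors}\supseteq\Z/2\Z$, and together with the upper bound we obtain $\Gamma^{2/3}(X)_{\tors}=\Z/2\Z$.
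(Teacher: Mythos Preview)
Your strategy coincides with the paper's: both combine the upper bound $|\!\oplus\Gamma^{n/n+1}(X)_{\tors}|\leq d/f=2$ from Theorem~\ref{twoquadricsurface} (via Remark~\ref{remarkforproductofquadric}) with an explicit order-$2$ class, and both recognise that the real content is showing that class is nonzero in $\Gamma^{2/3}(X)$ by analysing $\Gamma^{3}(X)$ modulo $4$ through Chern classes of the basis~(\ref{case1KX}). Your class $w=2P+2R+2y_{1234}$ and the paper's class $2y_{1234}$ are in fact equal in $\Gamma^{2/3}(X)$: the paper observes that $2(P+y_{1234})=c_{1}(x_{12})^{2}c_{1}(x_{34})$ and $2(R+y_{1234})=c_{1}(x_{34})^{2}c_{1}(x_{12})$ lie in $\Gamma^{3}(X)$, so $w\equiv -2y_{1234}\bmod\Gamma^{3}(X)$, and your linear form $c-a-b$ is exactly the one that vanishes on these two generators modulo $4$.

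The gap is precisely where you flag it: you have not verified $4\mid c-a-b$ on all of $\Gamma^{3}(X)$, and this verification \emph{is} the proof. The paper carries it out in full, computing $c_{j}$ for $j\leq 4$ of each element of the basis~(\ref{case1KX}) and checking that every resulting generator of $\Gamma^{3}(X)$ lies, modulo $4$, in the span of $2(P+y_{1234})$ and $2(R+y_{1234})$. Your one-line heuristic (``$y_{k}^{2}=0$ and $f=2$ make everything even, $e_{i}=2$ gives divisibility by $4$'') is too coarse to replace this: for instance $c_{4}\big((x_{1}+x_{2})(x_{3}+x_{4})\big)=2y_{1234}$ is only even, so one genuinely needs that the basis carries $2(x_{1}+x_{2})(x_{3}+x_{4})$ rather than $(x_{1}+x_{2})(x_{3}+x_{4})$, and similar care is required for the cross-terms in $c_{2}$ and $c_{3}$ of $2x_{12}(x_{3}+x_{4})$ and $2x_{34}(x_{1}+x_{2})$. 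Nothing in your outline is wrong, but the Chern-class bookkeeping you defer is the entire substance of the argument.
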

\begin{proof}
It follows from the basis (\ref{case1KXY}) that $2y_{1234}\in K(X)$. Hence, by (\ref{gammatwo}) we have $2y_{1234}\in \Gamma^{2}(X)$. As $d=4$, $4y_{1234}\in \Gamma^{3}(X)$. We show that the element $2y_{1234}$ is not contained in $\Gamma^{3}(X)$ by computing the Chern classes of the elements in the basis (\ref{case1KX}). Consider the basis (\ref{case1KX}) of $K(X)$ with $e=d_{1}=d_{2}=2$. Then, it follows from Whitney formula that we have $c_{1}(2(x_{2i-1}+x_{2i}))=2(y_{2i-1}+y_{2i})$, $c_{1}(x_{2i-1}+x_{2i})=y_{2i-1}+y_{2i}$, and $c_{2}(x_{2i-1}+x_{2i})=y_{2i-1}y_{2i}$ for all $i=1, 2$. Therefore, we have 
\begin{equation}\label{codimensiononebasis}
c_{2}(2(x_{2i-1}+x_{2i}))=4y_{2i-1}y_{2i},\, c_{1}(x_{2i-1}x_{2i})^{2}=2y_{2i-1}y_{2i}\, \text{ and }\, c_{1}(x_{2i-1}x_{2i})^{3}=0.
\end{equation}
Similarly, we obtain $c_{j}(2(x_{2i-1}+x_{2i}))=0$ for $j=3, 4$.

Let $z=(x_{1}+x_{2})(x_{3}+x_{4})$, $z'=(y_{1}+y_{2})(y_{3}+y_{4})$, $u'=y_{123}+y_{124}$, and $v'=y_{134}+y_{234}$. Then, by a direct computation, we have 
\begin{equation}\label{codimensiontwobasis}
c_{j}(z)=
\begin{cases}
2(\sum_{i=1}^{4}y_{i})+z' & \text{ for } j=1,\\
2y_{1234}+3z'+4(u'+v'+y_{12}+y_{34}) & \text{ for } j=2,\\
4(u'+v'+3y_{1234}) & \text{ for } j=3,\\
2y_{1234} & \text{ for } j=4.
\end{cases}
\end{equation}
Since $c_{2}(2z)=c_{1}(z)^{2}+2c_{2}(z)$, it follows from (\ref{codimensiontwobasis}) that
\begin{equation}\label{codimensiontwobasisprime}
c_{2}(2z)=8y_{1234}+14z'+16(u'+v'+y_{12}+y_{34}).
\end{equation}
As $c_{3}(2z)=2(c_{1}(z)c_{2}(z)+c_{3}(z))$ and $c_{4}(2z)=2(c_{1}(z)c_{3}(z)+c_{4}(z))+c_{2}(z)^{2}$, by (\ref{codimensiontwobasis})
\begin{equation}\label{codimensiontwobasisprimetwo}
c_{j}(2z)\equiv 0 \mod 4 \text{ for } j=3, 4.
\end{equation}

Let $u=x_{12}(x_{3}+x_{4})$. Then, we obtain $c_{1}(u)=2z_{12}+y_{3}+y_{4}+z'+u'$ and $c_{2}(u)$$=4y_{1234}+3u'+2(y_{12}+v')+y_{13}+y_{14}+y_{23}+y_{24}+y_{34}$. Therefore, we have
\begin{equation}\label{codimensionthreebasis}
c_{j}(2u)\!=\!
\begin{cases}
2(8y_{1234}\!+9u'\!+4v'+6y_{12}+3y_{13}+3y_{23}+3y_{14}+3y_{24}+2y_{34}) &\!\text{ for } j=2,\\
4(10y_{1234}+3u'+2v') &\!\text{ for } j=3,\\
8y_{1234} &\!\text{ for } j=4.
\end{cases}
\end{equation}
Let $v=x_{34}(x_{1}+x_{2})$. Then, we have the Chern classes (\ref{codimensionthreebasis}) for $2v$ by replacing indexes $1$, $2$, $3$, $4$ with $3$, $4$, $1$, $2$, respectively.

It follows from (\ref{codimensiononebasis}) that $c_{1}(x_{12})^{2}c_{1}(x_{34})=c_{1}(x_{12})^{2}c_{1}(x_{1234})=2(u'+y_{1234})$, $c_{1}(x_{34})^{2}c_{1}(x_{12})\\=c_{1}(x_{34})^{2}c_{1}(x_{1234})=2(v'+y_{1234})$. As the first Chern classes of $2z, 2u, 2v, 2(x_{2i-1}+x_{2i})$ are divisible by $4$, one easily sees that the subgroup generated by the products of three of the first Chern classes of any element in the basis (\ref{case1KX}) is generated by $2(u'+y_{1234})$ and $2(v'+y_{1234})$ modulo $4$. Similarly, using (\ref{codimensiononebasis}), (\ref{codimensiontwobasisprime}), (\ref{codimensionthreebasis}) one sees that the subgroup generated by the products of the first and second Chern classes of any element in the basis (\ref{case1KX}) is also generated by $2(u'+y_{1234})$ and $2(v'+y_{1234})$ modulo $4$. It follows from (\ref{codimensiontwobasisprimetwo}) and (\ref{codimensionthreebasis}) that the third and fourth Chern classes of $2z, 2u, 2v$ are divisible by $4$, thus, the group $\Gamma^{3}(X)$ is generated by $2(u'+y_{1234})$ and $2(v'+y_{1234})$ modulo $4$. Hence, $2y_{1234}\notin \Gamma^{3}(X)$ and this element gives a torsion of $\Gamma^{2/3}(X)$ of order $2$, thus the result follows from Remark \ref{remarkforproductofquadric}. \end{proof}

\begin{remark}
If $\bar{X}$ is a corresponding generic variety to $X$ in Proposition \ref{lowerboundgammaintwoquadric}, then we obtain $\CH^{2}(\bar{X})=\Z/2\Z$, which recovers \cite[Theorem 14.1]{IzhKar2000}. Indeed, it is possible to find such a variety by showing that the gamma filtration for $R_{L_{1}/F}(\SB(Q'_{1}))\times R_{L_{2}/F}(\SB(Q'_{2}))\times R_{L/F}(\SB(2, Q'_{1}\tens Q'_{2}))$ is torsion-free, where $L=L_{1}L_{2}$ is a biquadratic extension and $\SB(2, Q'_{1}\tens Q'_{2})$ is the generalized Severi-Brauer variety of rank $2$ left ideals in $(Q'_{1}\tens Q'_{2})_{L}$.
\end{remark}

\subsection{Three quadric surfaces with the same discriminant.}
Let $Q_1$, $Q_{2}$, $Q_{3}$ be three quaternion $F$-algebras and let $L$ be the quadratic extension over $F$ corresponding to three quadratic surfaces with the same discriminant. We set 
\begin{equation}\label{threequaternionassociated}
X=
\begin{cases}
\SB(Q_{1})\times \SB(Q_{2})\times \SB(Q_{3}) & \text{ if } \disc Q_i=1,\\
R_{L/F}(\SB(Q_{1}))\times R_{L/F}(\SB(Q_{2}))\times R_{L/F}(\SB(Q_{3}))& \text{ otherwise} 
\end{cases}
\end{equation}
Then, by the same argument as in the case of two quadric surfaces, the group $\CH^{2}(X)_{\tors}$ is isomorphic to that of the product of three quadric surfaces with the same discriminant.

Consider the second case of (\ref{threequaternionassociated}). If $\ind(Q_i)_{L}=1$ for all $i\in I_{3}$, then $X$ has torsion-free Chow groups, thus we may assume that $\ind(Q_{1})_{L}=2$. We choose a splitting field $E$ of $X$ as follows. If $\ind(Q_{2})_{L}=\ind(Q_{3})_{L}=1$, then we take a maximal subfield of $(Q_{1})_{L}$ for $E$. Assume $\ind(Q_{2})_{L}=2$, $\ind(Q_{3})_{L}=1$. Then, we take for $E$ a common maximal subfield of $(Q_{1})_{L}$ and $(Q_{2})_{L}$ (resp. the tensor product of maximal subfields of $(Q_{1})_{L}$ and $(Q_{2})_{L}$) if $\ind(Q_{1}\tens Q_{2})_{L}\leq 2$ (resp. otherwise). 

Let $G_{n}=|\{ij\in \{12, 13, 23\} | \ind(Q_{i}\tens Q_{j})_{L}=n \}|$ for $n=1, 2, 4$. Assume that $\ind(Q_{i})_{L}=2$ for all $i\in I_{3}$. If $G_1\geq 2$, then $(Q_{1})_{L}\simeq (Q_{2})_{L}\simeq (Q_{3})_{L}$, thus we take for $E$ a maximal subfield of $(Q_{1})_{L}$. If $G_{1}=1$, then $(Q_{i})_{L}\simeq (Q_{j})_{L}$ for some $i, j$, thus we take for $E$ the product of a maximal subfield of $(Q_{i})_{L}$ and a maximal subfield of the remaining quaternion. We now turn to the remaining case $G_1=0$: If $\ind(Q_{i}\tens Q_{j})_{L}=4$ and $\ind(Q_{1}\tens Q_{2}\tens Q_{3})_{L}\neq 8$, then we take for $E$ a maximal subfield of $(Q_{i}\tens Q_{j})_{L}$ which also splits the remaining quaternion. Similarly, if $\ind(Q_{i}\tens Q_{j})_{L}=2$, then we take for $E$ the product of a common maximal subfield of $(Q_{i})_{L}$ and $(Q_{j})_{L}$ and a maximal subfield of the remaining quaternion. Otherwise, we take for $E$ the product of maximal subfields of $(Q_{i})_{L}$ for all $i$. Hence,
\begin{equation*}
d:=[E:F]=
\begin{cases}
2 & \text{ if } G_1\geq 2,\\
4 & \text{ otherwise},\\
8 & \text{ if } \ind(Q_{1}\tens Q_{2}\tens Q_{3})_{L}=8.
\end{cases}
\end{equation*}

\begin{proposition}\label{threequadricsurface}
The torsion subgroup in the codimension $2$ Chow group of the product of three quadric surfaces with the same discriminant is contained in $(\Z/2\Z)^{\oplus 6}$.\end{proposition}
\begin{proof}
Let $Q_{i}$ be a quaternion $F$-algebra for $i\in I_{3}$ such that the corresponding qaudrics have the same discriminant. Let $X$ be the associated variety to the product of three quadric surfaces of $Q_{i}$ and $E$ be the splitting field of $X$ as above. 
If the discriminant is trivial, the result follows from Proposition \ref{threeconics}. Hence, we may assume that the discriminant is non-trivial, thus we have $X_{E}=R_{EL/E}(\P^{1})\times R_{EL/E}(\P^{1})\times R_{EL/E}(\P^{1})$ and $X_{EL}=(\P^{1})^{6}$. 

Let $x_{i}$ be the pullback of the class of tautological bundle on the projective line in $K(X_{EL})$ for $i\in I_{6}$. Set $J=\{\{1, 2\}, \{3, 4\}, \{5, 6\}\}$, $y_{i}=x_{i}-1$, and $y_{i_{1}\cdots i_{m}}=y_{i_{1}}\cdots y_{i_{m}}$ for $1\leq i_{1}<\cdots< i_{m}\leq 6$. It follows from the action of the Galois group $\Z/2\Z$ of $LE/E$ that we obtain the following bases of $K(X_{E})$ and $K(X)$, each of $36$ elements, respectively:
\[\{1, y_{p}+y_{q},\, y_{pq},\, y_{pr}+y_{qs},\, y_{rs}(y_{p}+y_{q}),\, y_{prt}+y_{qsu},\, y_{pqrs},\, y_{pq}(y_{rt}+y_{su}),\,
y_{pqrs}(y_{t}+y_{u}),
y_{123456}\},\]
where $p, q, r, s, t, u$ range over all numbers such that $\{\{p, q\}, \{r, s\}, \{t, u\}\}=J$, and
\begin{equation*}
\begin{split}
&\{1,\, e_{pq}(y_{p}+y_{q}),\, z_{pq},\, g_{tu}(z_{pr}+z_{qs}),\, e_{pq}z_{rs}(y_{p}+y_{q}),\, h(z_{prt}+z_{qsu}),\, z_{pq}z_{rs},\, g_{pq}z_{pq}(z_{rt}+z_{su}), \\
& e_{tu}z_{pq}z_{rs}(y_{t}+y_{u}), z_{12}z_{34}z_{56}\},
\end{split}
\end{equation*}
where $z_{pq}\!=y_{pq}+y_{p}+y_{q}, z_{pr}\!=y_{pr}+y_{p}+y_{r}, z_{prt}\!=y_{prt}+z_{pr}+y_{pt}+y_{rt}+y_{t}$, $e_{pq}=\ind(Q_{\max\{p,q\}/2})_{L}$, $g_{pq}=\ind(Q_{\max\{r,s\}/2}\tens Q_{\max\{t,u\}/2})_{L}$, and $h=\ind(Q_{1}\tens Q_{2}\tens Q_{3})_{L}$.

Note that any basis element of $K(X_{E})$ multiplied by $d$ is contained in the image of the restriction map and $|K(X_{E})/K(X)|=(e_{12}e_{34}e_{56}g_{12}g_{34}g_{56}h)^{4}$. Note also that if $h=1$, then
\begin{equation}\label{honeimagetwo}
y_{pr}+y_{qs}+y_{pt}+y_{qu}+y_{rt}+y_{su}-(y_{pq}+y_{rs}+y_{tu})=z_{prt}+z_{qsu}-(z_{pq}+z_{rs}+z_{tu})\in \Im(\res^{2/3})
\end{equation}
for all $\{\{p, q\}, \{r, s\}, \{t, u\}\}=J$, thus, by multiplying $y_{r}+y_{s}\in \Im(\res^{1/2})$ we get
\begin{equation}\label{honeimagethree}
(y_{r}+y_{s})(y_{pt}+y_{qu}-y_{pq}-y_{tu})+y_{rs}(y_{p}+y_{q}+y_{t}+y_{u})\in \Im(\res^{3/4}).
\end{equation}

Let $\alpha_{n}=|T^{n/n+1}(X_{E})/\Im(\res^{n/n+1})|$. We find upper bounds of $\alpha_{n}$ for $1\leq n\leq 6$. As $z_{pq}\in K(X)$, we get $y_{p}+y_{q}\in \Im(\res^{1/2})$, thus, $\alpha_{1}=1$. We divide the proof into two cases.


\framebox{{\it Case}: $G_{1}\geq 2$.} Then, $\ind(Q_{i})_{L}=d=h=2$ for all $i\in I_{3}$. If $g_{tu}=1$ for some $t, u$, then
\begin{equation}\label{threequadriccase1eq}
y_{pr}+y_{qs}-(y_{pq}+y_{rs})\!=z_{pr}+z_{qs}-(z_{pq}+z_{rs})\in T^{2}(X),
\end{equation}
thus, by multiplying $y_{p}+y_{q}, y_{t}+y_{u}\in \Im(\res^{1/2})$ we get
\begin{equation}\label{threequadriccase1eq2}
y_{pq}(y_{r}+y_{s})-y_{rs}(y_{p}+y_{q})\in \Im(\res^{3/4}), (y_{t}+y_{u})[y_{pq}(y_{r}+y_{s})-y_{rs}(y_{p}+y_{q})]\in \Im(\res^{4/5}).
\end{equation}
Hence, as $2y_{pq}, (y_{p}+y_{q})(y_{r}+y_{s})\in \Im(\res^{2/3})$ for all $\{p, q\}\neq\{r, s\}\!\in\! J$, we obtain $\alpha_{2}\leq 2^{3}$. As $(y_{p}+y_{q})(y_{r}+y_{s})(y_{t}+y_{u})\in \Im(\res^{3/4})$, it follows from (\ref{threequadriccase1eq2}) that $\alpha_{3}\leq 2^{6}$. Moreover, if $g_{tu}=1$ for some $t, u$ (i.e., $Q:=(Q_{\max\{p, q\}/2})_{L}\simeq (Q_{\max\{r, s\}/2})_{L}$), then it follows from the closed embedding $R_{L/F}(\SB(Q))\times R_{L/F}(\SB(Q_{\max\{t, u\}/2}))\hookrightarrow R_{L/F}(\SB(Q))\times R_{L/F}(\SB(Q))\times R_{L/F}(\SB(Q_{\max\{t, u\}/2}))$ that
\begin{equation}\label{threequadriccase1eq3}
y_{pqtu}+y_{rstu}+y_{tu}(y_{pr}+y_{qs})+y_{tu}(y_{ps}+y_{qr})\in \Im(\res^{4/5}).	
\end{equation}
Therefore, by (\ref{threequadriccase1eq2}) and (\ref{threequadriccase1eq3}) we have $\alpha_{4}\leq 2^{5}$. As $d=2$, we obtain $\alpha_{5}\leq 2^{3}$ and $\alpha_{6}\leq 2$, thus
$|\!\oplus T^{n/n+1}(X)_{\tors}|\leq 2^{2}$.

\framebox{{\it Case}: $G_{1}\leq 1$.} Let $g=g_{12}g_{23}g_{56}$. For simplicity, we shall assume that $\ind(Q_{i})_{L}=2$ for all $i\in I_{3}$, thus $|K(X_{E})/K(X)|=(8gh)^{4}$. Indeed, a simple modification of the proof below shows that the upper bounds of $\alpha_{2}\alpha_{3}\alpha_{4}\alpha_{5}\alpha_{6}$ for the cases $\ind(Q_{i})_{L}=1$ for some $i\in I_{3}$ are less than the maximum upper bound below.

It follows from the basis of $K(X)$ that $2y_{pq}\in T^{2}(X)$ and $(y_{p}+y_{q})(y_{r}+y_{s})\in \Im(\res^{2/3})$ for any $\{p, q\}\neq \{r, s\}\in J$. Moreover, we obtain $g_{tu}(y_{pr}+y_{qs})\in T^{2}(X)$ if $g_{tu}\neq 1$, thus $\alpha_{2}\leq 2^{3}g$ if $G_1=0$. If $g_{tu}=1$ for some $t, u$, then it follows from (\ref{threequadriccase1eq}) that $\alpha_{2}\leq 2^{3}g_{pq}g_{rs}=2^{3}g$. If $h=1$ (so $G_{2}=3$), then by using (\ref{honeimagetwo}) we can further reduce the upper bound $2^{6}$ to $2^{5}$, thus
\begin{equation*}\label{gequalonereferone}
\alpha_{2}\leq 2^{3}g \,\,(\text{resp.}\leq 2^{5}) \,\,\text{ if } h\neq 1 \,\,(\text{resp. otherwise}).
\end{equation*}
In codimension $3$, we have $2y_{rs}\cdot (y_{p}+y_{q}),\, (y_{1}+y_{2})(y_{3}+y_{4})(y_{5}+y_{6})\in \Im(\res^{3/4})$ for all $\{p, q\}\neq \{r, s\}\in J$. In addition, we get $g_{tu}(y_{pr}+y_{qs})\cdot (y_{t}+y_{u})\in \Im(\res^{3/4})$ if $g_{tu}\neq 1$. Hence, as $g_{tu}:=\max\{g_{12}, g_{34}, g_{56}\}=d=4$ except for the cases $h=1$ or $G_{2}=3$, we obtain $\alpha_{3}\leq 2^{6}\cdot d\cdot g_{pq}\cdot g_{rs}=2^{6}g$ (resp. $\leq 2^{5}\cdot d\cdot g_{pq}\cdot g_{rs}=2^{7}g$) if $G_1=0$ (resp. if $h=8$ \text{ or } $G_{2}=3$). Similarly, if $G_1=1$ (say, $g_{tu}=1$), then $d=4$ and $h=2$, thus it follows from (\ref{threequadriccase1eq2}) that $\alpha_{3}\leq 2^{5}\cdot d\cdot g_{pq}\cdot g_{rs}=2^{7}g$. Furthermore, if $h=1$, then by the same argument together with (\ref{honeimagethree}) we obtain $\alpha_{3}\leq 2^{6}\cdot d=2^{8}$.

In codimension $4$, we obtain $2y_{pq}\cdot 2y_{rs},\, (y_{p}+y_{q})\cdot  (y_{1}+y_{2})(y_{3}+y_{4})(y_{5}+y_{6})\in \Im(\res^{4/5})$ for all $\{p, q\}\neq \{r, s\}\in J$. Moreover, we get $g_{tu}(y_{pr}+y_{qs})\cdot 2y_{tu}\in T^{4}(X)$ if $g_{tu}\neq 1$. Therefore, $\alpha_{4}\leq 4^{3}\cdot (2\cdot 2g_{pq})(2\cdot 2g_{rs})(2\cdot 2g_{tu})=2^{12}g$ if $G_{1}=0$. Similarly, if $G_1=1$ (say, $g_{tu}=1$), then by multiplying $2y_{pq}\in T^{2}(X)$ to  (\ref{threequadriccase1eq}) we get $2y_{pqrs}\in T^{4}(X)$. Therefore, it follows from (\ref{threequadriccase1eq2}) and (\ref{threequadriccase1eq3}) that $\alpha_{4}\leq 2\cdot d^{6}=2^{13}$. In codimension $5$, as $2y_{pq}\in T^{2}(X)$ and $y_{p}+y_{q}\in \Im(\res^{1/2})$ for any $\{p, q\}\in J$, we get $\alpha_{5}\leq 2^{6}$. Obviously, $\alpha_{6}\leq d$.

Combining all bounds of $\alpha_{n}$, we see that $|\!\oplus T^{n/n+1}(X)_{\tors}|\leq 2^{6}$ except the cases where $h=1$ or $G_1=1, G_2=2$. In these cases, we can further reduce the upper bounds $2^{12}$ and $2^{10}$, respectively as follows. If $g_{pq}=2$, then $2y_{rs}(y_{tu}+y_{t}+y_{u})-g_{pq}(y_{rt}+y_{su})(y_{rs}+y_{r}+y_{s})=2y_{rstu}\in T^{3}(X)$ and $2y_{rstu}(y_{pq}+y_{p}+y_{q})\in T^{4}(X)$. Hence, if $2y_{rstu}\not\in T^{4}(X)$, then the class of $2y_{rstu}$ gives a torsion of order $2$ in $T^{3/4}(X)$. Similarly, if $2y_{rstu}(y_{pq}+y_{p}+y_{q})\not\in T^{5}(X)$, the its class gives a torsion of order $2$ in $T^{4/5}(X)$. Therefore, if $h=1$ (so $G_{2}=3$) or $G_1=1, G_2=2$, we have $|T^{2/3}(X)_{\tors}|\leq 2^{6}$ in both cases.\end{proof}

\paragraph{\bf Acknowledgments.} 
The author would like to thank N.~Karpenko for helpful discussion. This work was partially supported by an internal fund from KAIST, TJ Park Junior Faculty Fellowship of POSCO TJ Park Foundation, and National Research Foundation of Korea (NRF) funded by the Ministry of Science, ICT and Future Planning (2013R1A1A1010171).

\end{document}